\numberwithin{equation}{section}
\definecolor{gold}{RGB}{255,215,0}
\newtheorem{theorem}{Theorem}[section]
\newtheorem{lemma}{Lemma}[section]
\newtheorem{proposition}{Proposition}[section]
\tikzstyle{startstop} = [rectangle, rounded corners,
\tikzstyle{diff} = [rectangle, rounded corners,
\title{Global Existence and Diffusive Limits for a Class of Nonlocal Reaction-Diffusion Systems}
\author{Md Shah Alam* and Jeff Morgan**}
\date{}
\begin{document}

\maketitle

\begin{center}
$^{*}$ Department of Mathematics, Huston-Tillotson University, Austin, Texas 78702, malam@htu.edu\\
$^{**}$ Department of Mathematics, University of Houston, Houston, Texas, 77004, jjmorgan@uh.edu
\end{center}
\newcommand{\dt}{\,\mathrm{d}t}
\thispagestyle{firstpage}

\begin{abstract}
We study a class of semilinear reaction-diffusion systems with nonlocal diffusion on a bounded domain $\Omega$ in $\mathbb{R}^n$ with smooth boundary. The initial data is assumed to be component-wise nonnegative and bounded, and the reaction vector field is assumed to be quasi-positive and satisfy a generalized mass control condition. We obtain global existence and uniqueness of component-wise nonnegative solutions, and when the reaction vector field satisfies a linear intermediate sum condition, we establish the uniform boundedness of solutions in $L^p(\Omega)$ for all $2 \le p<\infty$ on bounded time intervals independent of the kernel of the nonlocal diffusion operator. This allows us to generalize a recent diffusive limit result of Laurencot and Walker \cite{laurencot2023nonlocal}. We also analyze a class of $m$-component reaction-diffusion systems in which some of the components diffuse nonlocally and the other components diffuse locally, and establish both global existence and a diffusive limit. 
\end{abstract}

\noindent\rule{\textwidth}{0.4pt}

\vspace{0.1cm}

\noindent
\textbf{AMS Subject Classification [2020]:}
35K57, 35K58, 35R09, 45K05, 35K20

\medskip

\noindent
\textbf{Keywords:}
Global Existence, Uniform Bounds, Diffusive Limits,
Duality Methods, Nonlocal Reaction--Diffusion Systems

\section{Introduction}\label{Introduction}

We are inspired by recent work of \cite{laurencot2023nonlocal} to investigate the global existence of solutions to $m$-component reaction diffusion systems, and their diffusive limit. Laurencot and Walker considered the nonlocal Gray-Scott model given by 
\begin{equation}\label{eq:intro1}
\left\{
\begin{aligned}
\frac{\partial}{\partial t}u(x,t)&=d_1\int_\Omega \varphi(x,y)(u(y,t)-u(x,t))+f_1(u(x,t),v(x,t)),&&x\in\Omega,t\ge0,\\
\frac{\partial}{\partial t}v(x,t)&=d_2\int_\Omega \varphi(x,y)(v(y,t)-v(x,t))+f_2(u(x,t),v(x,t)),&&x\in\Omega,t\ge0,\\
u&=u_0,\,v=v_0,&&x\in\Omega,t=0,
\end{aligned}
\right.
\end{equation}
where
\begin{equation}\label{eq:intro2}
  f(u,v) =\begin{pmatrix}f_1(u,v)\\f_2(u,v)\end{pmatrix}= \begin{pmatrix}
      -u v^2 + a(1-u) \\
      uv^2 - (a+b) v
  \end{pmatrix},
\end{equation}
$n\ge 2$, $\Omega$ is a bounded domain in $\mathbb{R}^n$ with smooth boundary $\partial\Omega$, $d_1,d_2,a,b>0$, $\varphi:\Omega\times\Omega\to\mathbb{R}_+$ is a measurable function satisfying 
$$\int_\Omega \varphi(x,y)dx=\int_\Omega \varphi(x,y)dy<\infty,$$ and $u_0$ and $v_0$ are bounded nonnegative functions on $\Omega$.  They proved global existence and uniqueness of nonnegative solutions to (\ref{eq:intro1}), and also showed that if $\varphi(x,y)=\psi(|x-y|)$ where $\psi\in C^\infty(\mathbb{R}_+,\mathbb{R}_+)$ has compact support, is non increasing and satisfies
$$\int_{\mathbb{R}^n} \psi(|z|)dz=1\,\text{and }\int_{\mathbb{R}^n} \psi(|z|)|z|^2dz=M<\infty,$$
then solutions to 
\begin{equation}\label{eq:intro3}
\left\{
\begin{aligned}
\frac{\partial}{\partial t}u(x,t)&=d_1\int_\Omega j^{n+2}\psi(j|x-y|)(u(y,t)-u(x,t))+f_1(u(x,t),v(x,t)),&&x\in\Omega,t\ge0,\\
\frac{\partial}{\partial t}v(x,t)&=d_2\int_\Omega j^{n+2}\psi(j|x-y|)(v(y,t)-v(x,t))+f_2(u(x,t),v(x,t)),&&x\in\Omega,t\ge0,\\
u&=u_0,\,v=v_0,&&x\in\Omega,t=0,
\end{aligned}
\right.
\end{equation}
satisfy bounds sufficient to prove a subsequence of the solutions $\{(u^{(j)},v^{(j)})\}$ converge in $L^2(\Omega\times(0,T))$ for every $T>0$ to the unique solution to
\begin{equation} \label{eq:intro4}
\left\{
\begin{aligned}
 \frac{\partial}{\partial t} U  &= \frac{Md_1}{2n}\Delta U + f_1(U,V), &&x\in\Omega,t>0, \\
  \frac{\partial}{\partial t} V  &= \frac{Md_2}{2n}\Delta V + f_2(U,V), &&x\in\Omega,t>0, \\
  \nabla U\cdot \eta&=\nabla V\cdot \eta=0,  &&x\in\partial\Omega,t>0,\\
 U  &= u_0,\,V=v_0  &&x\in\Omega,t=0.
\end{aligned}
\right.
\end{equation}
We should emphasize that the work in \cite{laurencot2023nonlocal} and analysis below is focused on the $n\ge2$ setting. The $n=1$ setting for the model discussed in \cite{laurencot2023nonlocal} is discussed in \cite{cappanera2024analysis} using a Galerkin approximation. 

The vector field given by (\ref{eq:intro2}) has a very special structure. $f$ is quasi-positive. That is, $f_1(0,z),f_2(z,0)\ge0$ whenever $z\ge 0$. It also satisfies a linear intermediate sum condition. That is, there exist $K\ge0$ and $L\in\mathbb{R}$ such that
$$f_1(w,z)\le L(w+z+1),\quad f_1(w,z)+f_2(w,z)\le L(w+z)+K\,\text{for all }w,z\ge 0.$$
Actually, the vector field $f$ above is much better behaved, but we focus below on a more general setting.

Our goal is to prove similar results for $m$-component systems where the reaction vector field $f=(f_i):\mathbb{R}^m\to\mathbb{R}^m$ is locally Lipschitz, satisfies a quasipositivity condition and there exist $c_i>0$, $K\ge0$ and $L\in\mathbb{R}$ so that 
$$\sum_{i=1}^m c_if_i(z)\le L\sum_{i=1}^mz_i+K\,\text{for all }z\in\mathbb{R}_+^m.$$
In this setting, we prove global existence of component-wise nonnegative solutions. Then, with the additional assumptions that (i) the vector field has a polynomial nature (with no restriction on the degree), (ii) there exists an $m\times m$ lower triangular matrix $A=(a_{i,j})$ such that $a_{i,i}=1$ for all $i$, $a_{i,j}>0$ when $i\ge j$, and (iii) there exist constants $K\ge0$ and $L\in\mathbb{R}$ so that for all $k\in\{1,...,m\}$ we have
$$\sum_{j=1}^k a_{k,j}f_j(z)\le L\sum_{j=1}^mz_j+K\,\text{for all }z\in\mathbb{R}_+^m,$$
we obtain a generalization of the diffusive limit result above by deriving uniform $L^p$-bounds for solutions independent of the kernel $\varphi$ of the nonlocal operator. The $L^p$ bounds are obtained by employing an $L^p$ energy functional that was first introduced for 2-component systems with local diffusion in Kouachi \cite{kouachi2001existence} and extended to the $m$-component setting with local diffusion in Abdelmalek and Kouachi \cite{abdelmalek2007proof}. We also obtain global existence and diffusive limit results for systems that combine both nonlocal and local diffusion, with the components satisfying local diffusion also satisfying homogeneous Neumann boundary conditions. In this setting, we combine the $L^p$ energy functional with duality arguments. Precise hypotheses and statements of results are given in section 2.

Nonlocal reaction–diffusion equations (NRDEs) form a broad class of partial differential equations (PDEs) in which the temporal evolution of a quantity (such as concentration, temperature, or population density) is governed by both local and nonlocal interactions. Local dynamics arise through classical diffusion and reaction terms, while nonlocal contributions capture interactions that extend beyond immediate neighborhoods. The inclusion of nonlocal terms allows for more realistic modeling of phenomena where long-range effects are intrinsic to the system. In contrast, classical reaction–diffusion equations (RDEs) account solely for local interactions, assuming that changes at a point are influenced only by its immediate surroundings. While sufficient for many applications, this local assumption can be overly restrictive. NRDEs overcome this limitation by incorporating long-range influences, making them especially well-suited for the study of complex systems in which nonlocal effects play a central role.

Research on the global existence and uniform boundedness of solutions of nonlocal reaction-diffusion systems (NRDs) is very recent compared to research on systems with local diffusion. We refer the reader to interesting recent work in \cite{daoud2024class} and \cite{nguyen2025well} that focus on the setting of nonlocal diffusion associated with the classical fractional Laplacian. The work in \cite{daoud2024class} employs quasipositivity and linear intermediate sums, whereas the work in \cite{nguyen2025well} takes strong advantage of the setting $\mathbb{R}^n$ instead of a bounded domain $\Omega$, and they obtain results in the setting of $f$ being super quadratic and satisfying a mass control condition. We also note the work in  \cite{andreu2010nonlocal, c2022local,tanaka2017mathematical, ahmad2014nonlinear,garcia2009logistic, chasseigne2007dirichlet,torebek2023global, caballero2021existence,gourley1996predator, deng2008nonlocal, deng2015global, liang2022global} for existence and uniqueness and to \cite{kuniya2018global, rodriguez2024nonlinear, chang2022spatiotemporal, pao1995reaction, chasseigne2006asymptotic, ignat2007nonlocal, cortazar2007boundary} for asymptotic behavior and stability. \cite{souplet1998blow, pao1992blowing, marras2017blow, torebek2023global, bogoya2018non, liang2022global} discuss blow-up phenomena of solutions of numerous nonlocal reaction-diffusion systems. 

Our work is organized as follows. Section 2 contains notation, statements of results, and comments on possible extensions. Section 3 contains a few lemmas that are critical to our results. Section 4 contains the proofs of global existence results for $m$-component systems with nonlocal diffusion similar to that given above, and $m$-component systems that mix nonlocal diffusion with classical local diffusion. Section 5 contains the proofs of diffusive limit results, and section 6 provides some examples, along with basic numerical approximations to illustrate some of our results, along with numeric comparisons between approximations of solutions to purely nonlocal systems and those of mixed local–nonlocal systems, highlighting qualitative differences in their pattern formation dynamics. We emphasize that the material in section 6 is only intended for illustrative purposes.

\section{Statements of Main Results and Observations} \label{Main Results}

This section introduces some notation, provides statements of our main results, and gives some observations associated with our work and related problems.
\subsection{Notation}
If $m\in\mathbb{N}$ the nonnegative orthant in $\mathbb{R}^m$ is defined by 
$$\mathbb{R}_+^m=\{z\in \mathbb{R}^m\,|\,z_i\ge0\,\text{for all }i=1,...,m\}.$$
Note that $\mathbb{R}_+=\mathbb{R}_+^1=[0,\infty)$. Throughout, $2\le n\in\mathbb{N}$ and $\Omega$ is a bounded domain in $\mathbb{R}^n$ with smooth boundary $\partial\Omega$ such that $\Omega$ lies on one side of $\partial\Omega$. The smoothness of the boundary does not play a role in our initial results below, but it is necessary in our later results associated with systems with both nonlocal and local diffusion, and also diffusive limits. If $0\le \tau< T$ then $Q_{\tau,T}$ denotes $\Omega\times(\tau,T)$, and $Q_{0,\infty}$ denotes $\Omega\times(0,\infty)$. 
We assume the reader is familiar with the standard spaces of continuous and continuously differentiable functions, as well as $L^p$ and Sobolev spaces. If $1\le p<\infty$ then $W_p^{(1)}(\Omega)$ denotes the Banach space consisting of elements in $L^p(\Omega)$ having the distributional derivatives $\partial_{x_i}u$, where $i\in\{1,...,n\}$, and each of the derivatives lie in $L^p(\Omega)$. The norm on $W_p^{(1)}(\Omega)$ is given by 
$$\|u\|_{p,\Omega}^{(1)}=\|u\|_{p,\Omega}+\sum_{i=1}^n\|\partial_{x_i}u\|_{p,\Omega}.$$
$W_p^{(2)}(\Omega)$ denotes the Banach space consisting of elements in $L^p(\Omega)$ having the distributional derivatives $\partial_{x_i}u$ and $\partial_{x_ix_j}u$, where $i,j\in\{1,...,n\}$, and each of the derivatives lie in $L^p(\Omega)$. The norm on $W_p^{(2)}(\Omega)$ is given by 
$$\|u\|_{p,\Omega}^{(2)}=\|u\|_{p,\Omega}+\sum_{i=1}^n\|\partial_{x_i}u\|_{p,\Omega}+\sum_{i,j=1}^n\|\partial_{x_ix_j}u\|_{p,\Omega}.$$
Similarly, if $0\le \tau< T$ and $1\le p<\infty$ then $W_p^{(2,1)}(Q_{\tau,T})$ denotes the Banach space consisting of elements in $L^p(Q_{\tau,T})$ having the distributional derivatives $\partial_tu$, $\partial_{x_i}u$ and $\partial_{x_ix_j}u$, where $i,j\in\{1,...,n\}$, and each of the derivatives lie in $L^p(Q_{\tau,T})$. The norm on $W_p^{(2,1)}(Q_{\tau,T})$ is given by
$$\|u\|_{p,Q_{\tau,T}}^{(2,1)}=\|u\|_{p,Q_{\tau,T}}+\|\partial_tu\|_{p,Q_{\tau,T}}+\sum_{i=1}^n\|\partial_{x_i}u\|_{p,Q_{\tau,T}}+\sum_{i,j=1}^n\|\partial_{x_ix_j}u\|_{p,Q_{\tau,T}}.$$
Finally, if $X$ is a Banach space, then $X'$ denotes the dual space of $X$. We make use of $\left(W_{n+1}^{(1)}(\Omega)\right)'$ in the proof of our diffusive limit results.

\subsection{Hypotheses and Statements of Results}
Let $m \in \mathbb{N}$ such that $m \ge 2$.  We are initially concerned with the reaction-diffusion system given by
\begin{equation} \label{eq:2.11}
\left\{
\begin{aligned}
 \frac{\partial}{\partial t} u_i(x,t) & = \int_{\Omega} d_i \varphi(x,y)(u_i(y,t)-u_i(x,t)) dy + f_i(u(x,t)), &&x \in \Omega, \ t \ge 0, \ i =1,2,....,m, \\
 u_i(x,0) & = v_i(x), &&x \in \Omega, \ i =1,2,....,m,
\end{aligned}
\right.
\end{equation}
where the following assumptions are satisfied:

\begin{enumerate} [leftmargin=*, labelwidth=4em, labelsep=1em, align=left]
 \item[(INIT)] \hypertarget{item:(INIT)} {(Initial Condition) \ \ $v=(v_i)\in C(\overline{\Omega},\mathbb{R}_+^m)$.}
 \item[(DIFF)] \hypertarget{item:(DIFF)} {(Diffusivity) $d_i>0$ for all $i=1,2,....,m$ and $\varphi\in C\left(\overline\Omega \times \overline\Omega,\mathbb{R}_+\right)$ such that $\varphi(x,y) = \varphi(y,x)$ for all $x,y \in \Omega$ and there exist $\varepsilon,\mu\ge0$ such that
\begin{flalign} \label{eq:2.12}
    0\le\varepsilon\le\int_{\Omega} \varphi(x,y) dy \le \mu\,\text{for all }x\in\Omega.
\end{flalign}}
\item [(F)] \hypertarget{item:(F)} {(Function)\  $f : \mathbb{R}^m \to \mathbb{R}^m$ is locally Lipschitz. Namely, there exists $C\in C(\mathbb{R}^m\times\mathbb{R}^m,\mathbb{R}_+)$ such that 
$$|f(u)-f(v)|\le C(u,v)|u-v|$$ 
for all $u,v\in\mathbb{R}^m$.}
\item[(QP)] \hypertarget{item:(QP)} {(Quasi-positivity) For each $i\in\{1,....,m\}$, $f_i(u) \ge 0 $ whenever $u \in \mathbb{R}_+^m$ such that $u_i=0$.}
 \item[(QBAL)] \hypertarget{item:(QBAL)} {(Quasi-Balancing) There exist $a_i>0$ for $i=1,...,m$ and $K\ge0$ and $L\in\mathbb{R}$ such that 
$$\sum_{i=1}^m a_i\,f_i(u)\le K+L\sum_{i=1}^m u_i\quad\text{for all }u\in\mathbb{R}_+^m.$$}
\end{enumerate}
We remark that the continuity of the functions $v_i$, $f$ and $\varphi$ imply that stating (\ref{eq:2.11}) on $\Omega$ is equivalent to stating (\ref{eq:2.11}) on $\overline\Omega$. If $T>0$ is an extended real number, then a function $u\in C^{(0,1)}(\overline\Omega\times[0,T))$ that satisfies (\ref{eq:2.11}) for $t<T$ is said to be a classical solution. This solution is said to be local if $T<\infty$ and global if $T=\infty$.

The term $\int_{\Omega} d_i \varphi(x,y)(u_i(y,t)-u_i(x,t)) dy$ that appears in (\ref{eq:2.11}) represents a nonlocal diffusion term with kernel $d_i\varphi(x,y)$. The conditions \hyperlink{item:(INIT)}{(INIT)}, \hyperlink{item:(F)}{(F)} and \hyperlink{item:(QP)}{(QP)} above guarantee that (\ref{eq:2.11}) has a unique classical maximal component-wise nonnegative solution. Moreover, as stated below, the condition \hyperlink{item:(QBAL)}{(QBAL)} guarantees the solution is a global solution. We note that \hyperlink{item:(QBAL)}{(QBAL)} has been studied extensively in the setting where nonlocal diffusion in (\ref{eq:2.11}) is replaced by local diffusion in the form of an elliptic operator. When the inequality ``$\le$'' in \hyperlink{item:(QBAL)}{(QBAL)} is replaced by ``$=$'' and $K,L\equiv 0$, this condition is referred to as a balancing condition, or a conservation of mass condition. Many references associated with this can be found in the summary article of Michel Pierre \cite{pierre2010global}. In that setting, many more assumptions are typically required of the reaction vector field $f$ to prove global existence. Things are much simpler in this setting. 

\begin{theorem} \label{global} If \hyperlink{item:(INIT)}{(INIT)}, \hyperlink{item:(DIFF)}{(DIFF)}, \hyperlink{item:(F)}{(F)}, \hyperlink{item:(QP)}{(QP)} and \hyperlink{item:(QBAL)}{(QBAL)} are satisfied, then (\ref{eq:2.11}) has a unique classical component-wise nonnegative global solution $u\in C^{(0,1)}\left(\overline\Omega\times\mathbb{R}_+,\mathbb{R}_+^m\right)$. Furthermore, if $\varepsilon>0$ in \hyperlink{item:(DIFF)}{(DIFF)} and  $K=L=0$, or $L<0$ then the solution $u$ is uniformly sup norm bounded.
\end{theorem}

We remark that the system (\ref{eq:2.11}) can also be posed with nonnegative initial data in $L^\infty(\Omega)$ and the requirement that the equations hold for $x$ a.e. $\Omega$, and the conditions on $\varphi$ changed in \hyperlink{item:(DIFF)}{(DIFF)} so that $\varphi$ is only required to be a symmetric measurable function that satisfies the given integral bound. The proof of the theorem above gives a similar result, with the understanding that the solution no longer lives in $C^{(0,1)}\left(\overline\Omega\times\mathbb{R}_+,\mathbb{R}_+^m\right)$, but instead lives in $C^1([0,\infty),L^\infty(\Omega,\mathbb{R}_+^m))$. However, the uniform sup norm portion of the result seems to require a sup norm bound on $\varphi$.

As stated in the introduction, the primary goal of this work is the generalization of the results in \cite{laurencot2023nonlocal}. The results in that work were indicated in the introduction. In the setting of general $m\in\mathbb{N}$ with $m\ge 2$, we generalize the setting in \cite{laurencot2023nonlocal} as follows.

\begin{enumerate} [leftmargin=*, labelwidth=4em, labelsep=1em, align=left]
\item[(DIFF2)] \hypertarget{item:(DIFF2)} {(Diffusivity) $d_i>0$ for all $i=1,2,....,m$ and  $\psi\in C^\infty\left(\mathbb{R}_+,\mathbb{R}_+\right)$ is nonincreasing, has compact support, $\int_{\mathbb{R}^n}\psi(|z|)dz=1$
and $\int_{\mathbb{R}^n} |z|^2\psi(|z|)dz=M<\infty.$
 \item[(INT-SUM)] \hypertarget{item:(INT-SUM)} {(Intermediate-Sum) There exists a lower triangular $m \times m$ matrix $A=(a_{i,j})$ such that $a_{i,i}=1$ for all $i=1,...,m$ and positive entries below the diagonal, and $L>0$ so that for all $k=1,2,....,m$ we have
\begin{flalign} 
    \sum_{j=1}^k a_{k,j} f_j (u) \le L\left(\sum_{i=1}^k u_i \,+1\right)\ \ \ \text{for all} \ u \in \mathbb{R}_+^m, \label{eq:2.16}
\end{flalign}}}
\item[(Lp Continuity)] \hypertarget{item:(Lp Continuity)} There exists $L\ge 0$ and $r\in\mathbb{N}$ such that
$$ |f_i(u)|\le L\left(\sum_{i=1}^m u_i\,+1\right)^r\quad\text{for all }\,u\in\mathbb{R}_+^m,\,i=1,...,m,$$
and there exists $\tilde{p}\in\mathbb{N}$ such that if $\{v_k\}$ is a Cauchy sequence in $L^p(\Omega,\mathbb{R}^m)$ with $p\ge \tilde{p}$, then 
$\{f_i(v_k)\}$ is a Cauchy sequence in $L^2(\Omega,\mathbb{R}_+^m)$ for all $i=1,...,m$.
\end{enumerate}

We remark that if we had only required that the lower triangular matrix $A$ in \hyperlink{item:(INT-SUM)}{(INT-SUM)} satisfy $a_{i,i}=1$ for all $i$, with no requirement on the entries below the diagonal, then we could readily prove that a modified version of $A$ satisfying the conditions given and a possible larger value of $L$ would give the version given above. In addition, we  remark that \hyperlink{item:(Lp Continuity)}{(Lp Continuity)} holds if each $f_i(u)$ is polynomial in $u$.
	
Similar to the system considered in \cite{laurencot2023nonlocal}, we study the system 
\begin{equation} \label{eq:2.17}
\left\{
\begin{aligned}
 \frac{\partial}{\partial t} u_i(x,t) &= \int_{\Omega} d_i j^{n+2}\psi(j|x-y|)(u_i(y,t)-u_i(x,t)) dy + f_i(u(x,t)), &&x \in \Omega, \ t \ge 0, \ i=1,...,m, \\
 u_i(x,0) &= v_i(x), &&x \in \Omega,\,i=1,...,m.
\end{aligned}
\right.
\end{equation}
We are able to prove that there is a subsequence of the solutions $u^{(j)}$ to (\ref{eq:2.17}) that converges to the solution to
\begin{equation} \label{eq:2.18}
\begin{cases}
 \frac{\partial}{\partial t} U_i  = \frac{Md_i}{2n}\Delta U_i + f_i(U), \ \ \ \text{on }Q_{0,T},\,i=1,...,m,\\
  \nabla U_i\cdot \eta=0, \hspace{0.78 in} \text{on }\partial\Omega\times(0,T),\,i=1,...,m,\\
 U_i  = v_i, \hspace{1.02 in} \text{on } \Omega,t=0,\,i=1,...,m.
\end{cases}
\end{equation}
in $L^p(\Omega\times(0,T))$ for every $T>0$ and $1\le p<\infty$. We remark that the results in \cite{morgan1989global} guarantee the existence of a unique component-wise nonnegative global solution to (\ref{eq:2.18}). 

The precise statement of our diffusive limit result is given below.

 \begin{theorem} \label{DiffLimit} Suppose $n\ge 2$. If \hyperlink{item:(INIT)}{(INIT)}, \hyperlink{item:(DIFF2)}{(DIFF2)}, \hyperlink{item:(F)}{(F)}, \hyperlink{item:(QP)}{(QP)}, \hyperlink{item:(INT-SUM)}{(INT-SUM)} and \hyperlink{item:(Lp Continuity)}{(Lp Continuity)} are satisfied, then (\ref{eq:2.17}) has a unique classical component-wise nonnegative global solution $u^{(j)}=\left(u_i^{(j)}\right)$ for each $j\in\mathbb{N}$, and there exists a unique nonnegative global solution $U=(U_i)$ of (\ref{eq:2.18}). Furthermore, there exists a subsequence of $\{u^{(j)}\}$ that converges in $L^p(Q_{0,T})$ to $U$ for each $T>0$ and $1\le p<\infty$. 
\end{theorem}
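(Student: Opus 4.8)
The plan is to establish the three assertions in order, with the diffusive limit resting on uniform-in-$j$ estimates. For each fixed $j$ the kernel $\varphi_j(x,y)=j^{n+2}\psi(j|x-y|)$ is continuous, symmetric, and has finite $\int_\Omega\varphi_j(x,y)\,dy$, so it satisfies \hyperlink{item:(DIFF)}{(DIFF)}; to invoke Theorem \ref{global} I would deduce \hyperlink{item:(QBAL)}{(QBAL)} from \hyperlink{item:(INT-SUM)}{(INT-SUM)} by summing all $m$ rows of the triangular inequality, so that the coefficient of each $f_\ell$ is $\sum_{k\ge\ell}a_{k,\ell}\ge a_{\ell,\ell}=1>0$ and the right-hand side is dominated by $Lm(\sum_i u_i+1)$. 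This gives a unique classical componentwise nonnegative global $u^{(j)}$ for every $j$. Existence and uniqueness of the limit solution $U$ follows from the local theory, since \hyperlink{item:(INT-SUM)}{(INT-SUM)} is exactly the linear intermediate-sum structure of \cite{morgan1989global}, which I would cite. The substance of the theorem is the convergence.

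The decisive step is a bound on $\{u^{(j)}\}$ that does \emph{not} degenerate as $j\to\infty$ (the bounds from Theorem \ref{global} do degenerate, since $\int_\Omega\varphi_j\,dy\sim j^2$). I would obtain this with an $L^p$ energy functional exploiting the triangular structure. The one algebraic identity used throughout is dissipativity: symmetrizing in $(x,y)$, for any convex $\Phi$ one has $\int_\Omega\Phi'(w)\,\mathcal{K}_j w\,dx\le0$, where $\mathcal{K}_j w(x)=\int_\Omega\varphi_j(x,y)(w(y)-w(x))\,dy$. Writing $w_k=\sum_{\ell\le k}a_{k,\ell}u^{(j)}_\ell$ and testing $\frac{d}{dt}\int_\Omega w_k^p$ against $p\,w_k^{p-1}$, I would argue by induction on $k$: the reaction part is controlled \emph{linearly} by \hyperlink{item:(INT-SUM)}{(INT-SUM)}, hence by $w_k$ itself plus lower levels already bounded; the diffusion part, after choosing $d_k$ as reference diffusivity, splits as $d_k\int_\Omega w_k^{p-1}\mathcal{K}_j w_k\le0$ plus cross-terms $\sum_{\ell<k}a_{k,\ell}(d_\ell-d_k)\int_\Omega w_k^{p-1}\mathcal{K}_j u_\ell$ caused by the mismatch of the diffusivities. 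By Cauchy--Schwarz and Young each cross-term is bounded by $\epsilon$ times the good dissipation of $w_k$ plus a multiple of the nonlocal energy $\int_0^T\!\!\int_\Omega\!\!\int_\Omega\varphi_j|u^{(j)}_\ell(y)-u^{(j)}_\ell(x)|^2$ of a lower level; choosing $\epsilon$ small absorbs the first and the second is already controlled inductively. The base case $k=1$ closes because $f_1\le L(u_1+1)$ makes $\int u_1 f_1$ linearly controllable, which simultaneously bounds $\|u^{(j)}_1\|_{L^\infty(0,T;L^p)}$ and its nonlocal energy. The outcome is $\sup_j\|u^{(j)}\|_{L^\infty(0,T;L^p(\Omega))}<\infty$ for every $2\le p<\infty$, a uniform bound on the nonlocal energy of every component, and, via the polynomial growth in \hyperlink{item:(Lp Continuity)}{(Lp Continuity)}, a uniform bound on $f_i(u^{(j)})$ in $L^q(\Omega\times(0,T))$ for every $q<\infty$.

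Next I would extract a convergent subsequence. The uniform nonlocal energy controls spatial translations uniformly in $j$ through a Bourgain--Brezis--Mironescu mechanism (the rescaled kernels form an approximation of the Dirichlet energy), while from $\partial_t u^{(j)}_i=\mathcal{L}_j u^{(j)}_i+f_i(u^{(j)})$ and the estimate $\|\mathcal{L}_j u^{(j)}_i\|_{(H^1(\Omega))'}\le C\,(\text{nonlocal energy})^{1/2}$ I would bound the time derivatives in $L^2(0,T;(H^1(\Omega))')+L^q$, controlling temporal translations. A nonlocal Aubin--Lions/Riesz--Fréchet--Kolmogorov argument then yields, along a subsequence, $u^{(j)}\to U$ strongly in $L^2(\Omega\times(0,T))$ and a.e.; interpolating with the uniform $L^p$ bounds upgrades this to convergence in $L^p(\Omega\times(0,T))$ for every $1\le p<\infty$, and \hyperlink{item:(Lp Continuity)}{(Lp Continuity)} gives $f_i(u^{(j)})\to f_i(U)$ in $L^2$.

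Finally I would identify the limit by passing to the limit in the symmetrized weak formulation: for smooth $\zeta$ with $\nabla\zeta\cdot\eta=0$ on $\partial\Omega$, the diffusion term is $-\tfrac{d_i}{2}\int_0^T\!\!\int_\Omega\!\!\int_\Omega\varphi_j(u^{(j)}_i(y)-u^{(j)}_i(x))(\zeta(y)-\zeta(x))$, and the consistency computation (rescale $z=j(y-x)$, Taylor expand, using $\int_{\mathbb{R}^n}\psi(|z|)\,z\otimes z\,dz=\tfrac{M}{n}I$) gives $\mathcal{K}_j\zeta\to\tfrac{M}{2n}\Delta\zeta$, so the bilinear form converges to $\tfrac{Md_i}{2n}\int_0^T\!\!\int_\Omega\nabla U_i\cdot\nabla\zeta$, the boundary contributions vanishing because $\zeta$ is admissible and $\partial\Omega$ is $C^{2+\alpha}$. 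The reaction and time terms pass by the strong convergences above, so $U$ is a weak solution of (\ref{eq:2.18}); uniqueness from \cite{morgan1989global} finishes the proof. I expect the main obstacle to be exactly this nonlocal-to-local passage together with the compactness extraction: because $\varphi_j$ concentrates on the diagonal, the convergence of the quadratic forms and the strong compactness must be justified by a genuine Bourgain--Brezis--Mironescu argument rather than classical Sobolev compactness, and the boundary analysis that produces the Neumann condition requires care (and presumably the standing hypothesis $n\ge2$).
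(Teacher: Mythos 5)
Your overall architecture matches the paper's: global existence for each $j$ via (QBAL) deduced from (INT-SUM), uniform-in-$j$ $L^p$ bounds plus a bound on the nonlocal energy $\int_0^T\!\!\int_{\Omega\times\Omega}\varphi_j\,|u_i^{(j)}(y)-u_i^{(j)}(x)|^2$, a time-derivative bound in a negative Sobolev norm, a nonlocal Aubin--Lions/BBM compactness step (the paper imports this as Lemma \ref{lem:5.4} from \cite{laurenccot2023nonlocal}), and identification of the limit (the paper invokes Proposition \ref{prop:3.1} rather than redoing the Taylor expansion of the rescaled kernel, but that is the same computation). The difference, and the problem, is in the decisive step: the kernel-independent $L^p$ bound.

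Your plan is to test $\tfrac{d}{dt}\int_\Omega w_k^p$ with $w_k=\sum_{\ell\le k}a_{k,\ell}u_\ell^{(j)}$, peel off $d_k\int w_k^{p-1}\mathcal{K}_jw_k\le 0$, and absorb the diffusivity-mismatch cross-terms $\int w_k^{p-1}\mathcal{K}_ju_\ell$ by Cauchy--Schwarz and Young into ``$\epsilon$ times the dissipation of $w_k$ plus the nonlocal energy of a lower level.'' For $p>2$ the powers do not match. The good dissipation at level $p$ controls $\int\!\!\int\varphi_j\bigl(w_k(y)^{p/2}-w_k(x)^{p/2}\bigr)^2$, while the symmetrized cross-term is $\int\!\!\int\varphi_j\bigl(w_k(y)^{p-1}-w_k(x)^{p-1}\bigr)\bigl(u_\ell(y)-u_\ell(x)\bigr)$. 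Splitting $|a^{p-1}-b^{p-1}|\lesssim \max(a,b)^{(p-2)/2}\,|a^{p/2}-b^{p/2}|$ and applying Cauchy--Schwarz leaves you needing a bound on the \emph{weighted} energy $\int\!\!\int\varphi_j\,\max\bigl(w_k(x),w_k(y)\bigr)^{p-2}\bigl(u_\ell(y)-u_\ell(x)\bigr)^2$, which is not among the quantities your induction controls (the base case and lower levels only give unweighted $L^2$-type energies of $u_\ell$, or energies of powers of the lower combinations $w_\ell$). The induction closes at $p=2$ but not beyond, and this is exactly the classical obstruction for unequal diffusivities. The paper circumvents it with the Kouachi--Abdelmalek functional $\mathcal{L}_p[u]=\int_\Omega\sum_{|\beta|=p}\binom{p}{\beta}\theta^{\beta^2}u^\beta$ (Lemma \ref{Lpbdd}): after symmetrization and the mean value theorem, the \emph{entire} diffusive contribution collapses into a single quadratic form $\sum_{|\beta|=p-2}\binom{p}{\beta}\theta^{\beta^2}U^\beta\,Z^TAZ$ in the differences $Z_i=u_i(y)-u_i(x)$, and the weights $\theta$ are chosen once so that the matrix $\mathscr{M}$ in (\ref{M}) is positive definite, making this term nonpositive with no Young-inequality absorption and hence no mismatched powers. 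If you want to keep your route you would need either equal diffusivities, or a genuinely new way to control the weighted cross-energies; otherwise you should adopt the $\theta^{\beta^2}$-weighted functional. The remaining steps of your proposal (compactness, upgrade from $L^2$ to $L^p$ by interpolation with the uniform $L^p$ bounds, $f_i(u^{(j_k)})\to f_i(U)$ via (Lp Continuity), identification of the Neumann limit problem) are sound and agree with the paper.
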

 
The proof of Theorem \ref{DiffLimit} involves the use of a complex $L^p$ energy functional that was first introduced for 2-component systems with local diffusion in Kouachi \cite{kouachi2001existence} and extended to the $m$-component setting in Abdelmalek and Kouachi \cite{abdelmalek2007proof}. The estimates obtained from the energy functional are supplemented with the analysis in \cite{laurencot2023nonlocal} to obtain our diffusive limit result.

Similar to the remark made in \cite{laurencot2023nonlocal}, Theorem \ref{DiffLimit} is probably also true when $n=1$, but the compactness argument that works for $n\ge2$ does not work in the setting when $n=1$. The details for $n=1$ are not included here.

The final results in this work are related to systems that couple nonlocal and local diffusion. More precisely, we consider systems of the form 
\begin{equation} \label{eq:2.19}
\left\{
\begin{aligned}
 \frac{\partial}{\partial t} u_i(x,t) &= \Gamma_i (u_i)(x,t) + f_i(u(x,t)), &&x \in \Omega, \ t \ge 0, \ i =1,....,m_1, \\
  \frac{\partial}{\partial t} u_j(x,t) &= d_j \Delta u_j + f_j(u(x,t)), &&x \in \Omega, \ t > 0, \ j =m_1+1,....,m, \\
  \nabla u_j(x,t)\cdot\eta&=0, &&x\in\partial\Omega, \ t> 0, \ j=m_1+1,....,m,\\
 u_i(x,0) &= v_i(x), &&x \in \Omega, i=1,...,m,\\
\end{aligned}
\right.
\end{equation}
where
$$\Gamma_i (u_i)(x,t)=\int_{\Omega} d_i \varphi(x,y)(u_i(y,t)-u_i(x,t)) dy.$$
Here, $m_1,m\in\mathbb{N}$ with $m_1<m$ and $d_i>0$ and $\varphi$ satisfies the conditions in \hyperlink{item:(DIFF)}{(DIFF)}. The functions $v_i$ satisfy a modified version of \hyperlink{item:(INIT)}{(INIT)}, and the functions $f=(f_i)$ satisfy \hyperlink{item:(QP)}{(QP)} a modified version of \hyperlink{item:(INT-SUM)}{(INT-SUM)} for the global existence result, and \hyperlink{item:(INT-SUM)}{(INT-SUM)} for the diffusive limit result. We state these more precisely below.

\begin{enumerate} [leftmargin=*, labelwidth=4em, labelsep=1em, align=left]
 \item[(INIT2)] \hypertarget{item:(INIT2)} {(Initial Condition 2)} $v_i\in C(\overline{\Omega},\mathbb{R}_+)$ for $i\in\{1,...,m_1\}$ and $v_j\in C^2\left(\overline{\Omega},\mathbb{R}_+\right)$ with $\frac{\partial}{\partial\eta}v_j(x)=0$ for all $x\in\partial\Omega$ and $j\in\{m_1+1,...,m\}$.
\item[(INT-SUM2)] \hypertarget{item:(INT-SUM2)} {(Intermediate-Sum 2) 
There exists a lower triangular $m\times m$ matrix $A=(a_{i,j})$ such that $a_{i,i}=1$ for all $i$, $a_{i,j}>0$ for all $i\ge m_1$ and $j\le i$, and nonnegative entries otherwise, and $L\ge 0$ such that for all $k=m_1,...,m$ 
$$\sum_{j=1}^k a_{k,j}f_j(u)\le L\left(\sum_{i=1}^{m} u_i \,+1\right)\quad\text{for all }u\in\mathbb{R}_+^{m}.$$}
\item[(POLY)] \hypertarget{item:(POLY)} There exists $L\ge 0$ and $r\in\mathbb{N}$ such that
$$ |f_i(u)|\le L\left(\sum_{i=1}^m u_i\,+1\right)^r\quad\text{for all }\,u\in\mathbb{R}_+^m,\,i=1,...,m.$$
\end{enumerate}

We remark that if the matrix $A$ in \hyperlink{item:(INT-SUM2)}{(INT-SUM2)} was only required to satisfy $a_{i,i}=1$ and $a_{m_1,j}>0$ for all $j\le m_1$, and no further requirements on the entries below the diagonal in $A$, then a version of $A$ could be created with a potentially larger value of $L$ so that the requirements listed in \hyperlink{item:(INT-SUM2)}{(INT-SUM2)} are met. 

\bigskip By a \emph{solution} of the system (\ref{eq:2.19}), we mean a function \(u\) satisfying the following:
\begin{itemize}
    \item For each \(i=1,\dots,m_1\), we have \( u_i \in C^{(0,1)}\left(\overline{\Omega}\times[0,\infty)\right) \).
    \item For each \(j=m_1+1,\dots,m\), we have \( u_j \in C\left(\overline{\Omega}\times[0,\infty)\right) \cap W_p^{(2,1)}\left(Q_{0,T}\right)\) for each $1<p<\infty$ and $T>0$.
    \item The functions $u_i$ for $i=1,\dots,m$ satisfy the partial differential equations, boundary conditions and initial data in (\ref{eq:2.19}).
\end{itemize}
We have the following result.

\begin{theorem}\label{globalmixed} If \hyperlink{item:(INIT2)}{(INIT2)}, \hyperlink{item:(DIFF)}{(DIFF)}, \hyperlink{item:(F)}{(F)}, \hyperlink{item:(QP)}{(QP)}, \hyperlink{item:(INT-SUM2)}{(INT-SUM2)} and \hyperlink{item:(POLY)}{(POLY)} are satisfied, then (\ref{eq:2.19}) has a unique classical component-wise nonnegative global solution. 
\end{theorem}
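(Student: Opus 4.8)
The plan is to follow the standard program for reaction–diffusion systems: first produce a local-in-time componentwise nonnegative solution together with a blow-up alternative, and then rule out finite-time blow-up by establishing an a priori $L^\infty\!\left(\Omega\times(0,T)\right)$ bound for every $T>0$. For the local theory I would recast (\ref{eq:2.19}) as an abstract semilinear problem. By (DIFF2) each $\Gamma_i$ is a bounded, $L^p$-dissipative operator on $C(\overline\Omega)$ and on every $L^p(\Omega)$, while $D_j\Delta$ with homogeneous Neumann conditions generates an analytic semigroup; a contraction-mapping argument on the mild formulation, using the local Lipschitz hypothesis (F2), then yields a unique maximal solution on $[0,T_{\max})$ in the regularity class of the stated definition of solution (parabolic $W_p^{(2,1)}$-regularity for the $U_j$ and $C^{(0,1)}$ for the $u_i$ by bootstrapping). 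Componentwise nonnegativity follows from (QP2) by the usual truncation/invariant-region argument, since the nonlocal operator preserves the cone of nonnegative functions. The continuation criterion then reduces the theorem to showing that $\|u(t)\|_\infty+\|U(t)\|_\infty$ cannot blow up in finite time.

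\emph{Step 1 (total-mass control).} Both diffusion mechanisms conserve mass: $\int_\Omega \Gamma_i(u)\,dx=0$ by the symmetry $\varphi(x,y)=\varphi(y,x)$ in (DIFF2), and $\int_\Omega D_j\Delta U_j\,dx=0$ by the Neumann condition. I would take a positive linear combination of the inequalities in (INT-SUM2) over $k=m_1,\dots,m_1+m_2$: because $A$ has a positive diagonal and a positive $m_1$-th row, one can choose the multipliers so that every reaction component $F_l$ receives a strictly positive weight, producing $\sum_l \theta_l F_l \le C\,L(t)(M+1)$ with all $\theta_l>0$, where $M=\sum_i u_i+\sum_j U_j$. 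Integrating the correspondingly weighted sum of the equations and using mass conservation gives a differential inequality that closes by Gronwall, yielding $M\in L^\infty\!\left(0,T;L^1(\Omega)\right)$.

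\emph{Step 2 (duality bounds for the local block — the crux).} This is where I expect the real difficulty. The locally diffusing components carry possibly distinct diffusivities $D_j$, so $L^p$ energy estimates do not close; instead I would use a Pierre-type duality argument (as in the survey \cite{pierre2010global} and in \cite{morgan1989global}) adapted to the mixed local–nonlocal operator. Testing the mass-controlled combination against the solution of a suitable backward dual problem — the nonlocal contribution handled through its symmetrized nonnegative bilinear form and the local contribution through two integrations by parts — upgrades the $L^1$ control of Step 1 to $M\in L^2\!\left(\Omega\times(0,T)\right)$. Because the control in (INT-SUM2) is \emph{linear} in $M$, the hierarchical structure of its rows $k=m_1+1,\dots,m_1+m_2$ (each $g_s$ enters the $(m_1+s)$-th row with positive diagonal coefficient, the lower $g_t$ and all $f_i$ with nonnegative coefficients) lets me bound the combinations one index at a time and iterate the duality/regularity estimate to reach $M\in L^p\!\left(\Omega\times(0,T)\right)$ for every $p<\infty$. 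Then (POLY) gives $g_j\le L(t)(M+1)^r\in L^q\!\left(\Omega\times(0,T)\right)$ for all $q<\infty$, so parabolic $L^q$-regularity and the embedding $W_q^{(2,1)}\hookrightarrow C$ (for $q$ large) yield $U_j\in L^\infty\!\left(\Omega\times(0,T)\right)$. The main obstacle is precisely making this duality estimate rigorous in the presence of the nonlocal operator, which provides no spatial smoothing, together with the heterogeneous $D_j$.

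\emph{Step 3 (sup bounds for the nonlocal block) and conclusion.} Once $\sum_j U_j\in L^\infty\!\left(\Omega\times(0,T)\right)$, the nonlocal components are straightforward. For the weighted sum $w=\sum_{i=1}^{m_1} a_{m_1,i}u_i$ (all weights positive), the positive part of $\sum_i a_{m_1,i}\Gamma_i(u)$ is pointwise bounded by $\|\varphi\|_\infty\,\max_i(a_{m_1,i}d_i)\,\|w(t)\|_{L^1(\Omega)}$, which is controlled by Step 1, while the reaction obeys the linear bound from the $k=m_1$ row of (INT-SUM2). Hence $w$ satisfies, pointwise in $x$, a scalar differential inequality of the form $\partial_t w\le C(T)+L(t)\!\left(C'w+C_U+1\right)$, so Gronwall gives $w\in L^\infty\!\left(\Omega\times(0,T)\right)$, and componentwise nonnegativity bounds each $u_i$. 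With both blocks bounded in $L^\infty\!\left(\Omega\times(0,T)\right)$ for every $T$, the continuation criterion forces $T_{\max}=\infty$, proving global existence; uniqueness follows from the Lipschitz hypothesis (F2).
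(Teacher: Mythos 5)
Your proposal is correct and follows essentially the same route as the paper: semigroup local theory with a blow-up alternative, nonnegativity from (QP2), an $L^1$ bound from the full sum in (INT-SUM2) plus mass conservation, a Pierre-type duality bootstrap through the rows $k=m_1+1,\dots,m_1+m_2$ to get $L^p$ bounds for all $p<\infty$, then (POLY) with parabolic regularity for $\|U_j\|_\infty$ and finally a pointwise Gronwall argument for the nonlocal block. One clarification on the step you flag as the crux: the paper does not need any sign or dissipativity property of the symmetrized nonlocal bilinear form inside the duality pairing (which indeed has no definite sign when tested against a general dual solution $\phi_j\ne u$); it simply drops the negative part of $\Gamma_i u_i$, bounds the remaining term $\int\phi_j\int d_i\varphi(x,y)u_i(y,t)\,dy\,dx$ by $\|\phi_j\|_\infty\,d_i\|\varphi\|_\infty\|u_i\|_{L^1}$ using the embedding $W_q^{(2,1)}\hookrightarrow C$ and the Step-1 mass bound, and closes the iteration via an explicit estimate (its (4.5), from the positive $m_1$-th row of $A$) controlling $\|u\|_p$ by $\|U\|_p$ --- so the lack of smoothing from the nonlocal operator is harmless precisely because the kernel is bounded.
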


We remark that the conditions in \hyperlink{item:(INIT2)}{(INIT2)} can be weakened considerably on the $v_j$ for $j>m_1$ terms by applying the techniques in \cite{hollis1987global}. We leave the details to the interested reader.

\bigskip Finally, we obtain a diffusive limit result associated with (\ref{eq:2.19}). More precisely, for each $j\in\mathbb{N}$ we consider the system (\ref{eq:2.19}) with $\Gamma_i$ replaced by 
\begin{equation}\label{gammaj}
\Gamma_i(u_i)(x,t)=\int_\Omega d_ij^{n+2}\psi(j|x-y|)(u_i(y,t)-u_i(x,t))dy 
\end{equation}
where $\psi\in C^\infty(\mathbb{R}_+,\mathbb{R}_+)$ is nonincreasing, has compact support and  
\begin{equation}\label{psi1}
\int_{\mathbb{R}^n}\psi(|z|)dz=1\,\text{and }\int_{\mathbb{R}^n}\psi(|z|)|z|^2 dz=M.
\end{equation}
 We remark that (\ref{psi1}) can be satisfied even if $\psi$ does not have compact support. For example, with the choice of an appropriate positive constant $C$, the function $\psi(z)=C\exp\left(-|z|^2\right)$ satisfies these conditions.

In this setting, we denote the solutions to (\ref{eq:2.19}) by $u^{(j)}$, and we are able to prove that if $T>0$ then there is a subsequence of the solutions $u^{(j)}$ to (\ref{eq:2.19}) that converges in $L^2(Q_{0,T})$ to the solution to
\begin{equation} \label{eq:2.20}
\left\{
\begin{aligned}
 \frac{\partial}{\partial t} U_i  &= \frac{Md_i}{2n}\Delta U_i + f_i(U), &&Q_{0,T},\,i=1,...,m_1,\\
 \frac{\partial}{\partial t} U_i  &= d_i\Delta U_i + f_i(U), &&Q_{0,T},\,i=m_1+1,...,m,\\
  \nabla U_i\cdot \eta&=0, &&\partial\Omega\times(0,T),\,i=1,...,m,\\
 U_i  &= v_i, &&\Omega,t=0,\,i=1,...,m.
\end{aligned}
\right.
\end{equation}
We remark that similar to the case of (\ref{eq:2.18}) the results in \cite{morgan1989global} guarantee the existence of a unique component-wise nonnegative global solution to (\ref{eq:2.20}).

The precise statement of our diffusive limit result is given below.

 \begin{theorem} \label{DiffLimit2} Suppose $n\ge 2$. If \hyperlink{item:(INIT2)}{(INIT2)}, \hyperlink{item:(DIFF2)}{(DIFF2)}, \hyperlink{item:(F)}{(F)}, \hyperlink{item:(QP)}{(QP)}, \hyperlink{item:(INT-SUM)}{(INT-SUM)}, \hyperlink{item:(Lp Continuity)}{(Lp Continuity)}, (\ref{gammaj}) and (\ref{psi1}) are satisfied, then for each $j\in\mathbb{N}$ (\ref{eq:2.19}) has a unique classical component-wise nonnegative global solution $u^{(j)}=\left(u_i^{(j)}\right)$, and there exists a unique component-wise nonnegative global solution $U=(U_i)$ of (\ref{eq:2.20}). Furthermore, there exists a subsequence of $\{u^{(j)}\}$ that converges in $L^p(Q_{0,T})$ to $U$ for each $T>0$ and $1\le p<\infty$. 
\end{theorem}

The proof of Theorem \ref{DiffLimit2} involves the use of the same complex $L^p$ energy functional that is employed in the proof of Theorem \ref{DiffLimit}, along with a duality argument that makes use of Lemma \ref{lem:2nddiff}. As with the proof of Theorem \ref{DiffLimit}, the estimates obtained from the energy functional and duality are supplemented with the analysis in \cite{laurencot2023nonlocal} to obtain our diffusive limit result.

\subsection{Observations and Extensions}
Careful analysis of the proof of Lemma \ref{Lpbdd} reveals that if $0<s<1$ and $(-\Delta)^s$ is the classical fractional Laplacian, then the lemma can also be obtained for the system
\begin{equation} \label{eq:2.11Laamri}
\left\{
\begin{aligned}
 \frac{\partial}{\partial t} u_i(x,t) & = d_i(-\Delta)^su_i(x,t) + f_i(u(x,t)), &&x \in \Omega, \ t > 0, \ i =1,2,....,m, \\
 u_i(x,t)&=0, && x \in \partial\Omega, \ t > 0, \ i =1,2,....,m, \\
 u_i(x,0) & = v_i(x), &&x \in \Omega, \ i =1,2,....,m.
\end{aligned}
\right.
\end{equation}
with the $L^p(\Omega)$ bounds independent of the choice of $s$. Consequently, the global existence result obtained in \cite{daoud2024class} can be obtained by applying this technique.

We also remark that Theorem \ref{global} can be obtained with a similar proof if the condition \hyperlink{item:(QBAL)}{(QBAL)} is replaced by the existence of functions $h_i\in C^2(\mathbb{R}_+,\mathbb{R}_+)$ such that $h_i(z)\to\infty$ as $z\to\infty$ and $h_i''(z)\ge0$ for all $z\ge 0$, and constants $a_i>0$, $K\ge 0$ and $L\in\mathbb{R}$ so that
$$\sum_{i=1}^m a_i h_i'(u_i)f_i(u)\le K+L\sum_{i=1}^m h_i(u_i)\,\text{for all }u\in\mathbb{R}_+^m.$$
When $K=L=0$ this structure provides a separable convex Lyapunov structure. Note that this implies the functions $w_i=h_i(u_i)$ satisfy
\begin{equation*}
\left\{
\begin{aligned}
\frac{\partial}{\partial t}w_i(x,t)&\le \int_\Omega d_i\varphi(x,y)(w_i(y,t)-w_i(x,t))dy+h_i'(u_i(x,t))f_i(u(x,t)), &&x\in\Omega,t\ge0,i=1,...,m,\\
w_i(x,0)&=h_i(v_i(x)),&&x\in\Omega,
\end{aligned}
\right.
\end{equation*}
so we can follow the proof of Theorem \ref{global} to obtain bounds for $\sum_{i=1}^m a_iw_i(x,t)$ on $Q_{0,T}$ for each $T>0$, and consequently bounds for $u$ from the hypotheses on the functions $h_i$. A well known choice for the functions $h_i$ that occurs in modeling reversible chemical reactions is given by
$$h_i(z)=z\ln\left(\frac{z}{b_i}\right)-z+b_i\,\text{for }z\ge0,$$
where the value at $z=0$ is understood in the limit sense, and $b=(b_i)\in(0,\infty)^m$ is chosen so that $f(b)=\vec{0}$. In the general setting, this structure can also be used to modify \hyperlink{item:(INT-SUM)}{(INT-SUM)} and \hyperlink{item:(INT-SUM2)}{(INT-SUM2)}, and consequently extend the results in Theorems \ref{DiffLimit}, \ref{globalmixed} and \ref{DiffLimit2}.

One further extension can be obtained by noting that the primary consequence of \hyperlink{item:(INT-SUM)}{(INT-SUM)} is the existence of functions $g_j:(0,\infty)^{m-j}\to(0,\infty)$ for $j\in\{1,...,m-1\}$ so that if $\alpha=(\alpha_i)\in(0,\infty)^m$ with $\alpha_m=1$ such that $\alpha_j>g_j(\alpha_{j+1},...,\alpha_m)$ for $j\in\{1,...,m-1\}$, then there exists $L_\alpha>0$ so that 
$$\sum_{i=1}^m \alpha_if_i(u)\le L_\alpha\left(\sum_{i=1}^m u_i +1\right)\,\text{for all }u\in\mathbb{R}_+^m.$$
This is the structure that makes the arguments work in Theorems \ref{DiffLimit} and \ref{DiffLimit2}, and consequently, the results in these theorems can be obtained with \hyperlink{item:(INT-SUM)}{(INT-SUM)} replaced by this condition. We remark that this is equivalent to the condition given in  \cite{abdelmalek2007proof}.  While this structure is not as easily recognized in applications as \hyperlink{item:(INT-SUM)}{(INT-SUM)}, it is possible to construct examples of vector fields that satisfy this condition and do not satisfy \hyperlink{item:(INT-SUM)}{(INT-SUM)}. One example is given by
\begin{equation*}
f(u)=\left(\begin{matrix}u_1u_2^3-u_1^4\\u_1^4-u_1u_2^4\end{matrix}\right)\,\text{for all }u\in\mathbb{R}_+^2,
\end{equation*}
since $f$ clearly does not satisfy \hyperlink{item:(INT-SUM)}{(INT-SUM)}, but if $\alpha\ge1$ then there exists $L_\alpha>0$ so that
$$\alpha\left(u_1u_2^3-u_1^4\right)+\left(u_1^4-u_1u_2^4\right)\le u_1\left(\alpha u_2^3-u_2^4\right)\le L_\alpha u_1\,\text{for all }u\in\mathbb{R}_+^2.$$

We also remark that the proof of Theorem \ref{globalmixed} can be modified using the ideas in the proof of Theorem 2.5 in \cite{morgan1990boundedness} to obtain a uniform boundedness result provided we make a small change in \hyperlink{item:(INT-SUM)}{(INT-SUM)} to require the existence of $c_i>0$ for $i\in\{1,...,m\}$, $K\ge0$ and $\tilde{L}\in\mathbb{R}$ such that
$$\sum_{i=1}^m c_if_i(z)\le \tilde{L}\sum_{i=1}^m z_i+K\,\text{for all }z\in\mathbb{R}_+^m$$
where either $\tilde{L}=K=0$, or $\tilde{L}<0$. We leave the details of this and the above observations to the interested reader.

Furthermore, if $\mu=0$ in \hyperlink{item:(DIFF)}{(DIFF)} then Theorem \ref{globalmixed} gives a global existence result associated with the system  
\begin{equation} \label{eq:2.19zerovarphi}
\left\{
\begin{aligned}
 \frac{\partial}{\partial t} u_i(x,t) &= f_i(u(x,t)), &&x \in \Omega, \ t \ge 0, \ i =1,....,m_1, \\
  \frac{\partial}{\partial t} u_j(x,t) &= d_j \Delta u_j + f_j(u(x,t)), &&x \in \Omega, \ t > 0, \ j =m_1+1,....,m, \\
  \nabla u_j(x,t)\cdot\eta&=0, &&x\in\partial\Omega, \ t> 0, \ j=m_1+1,....,m,\\
 u_i(x,0) &= v_i(x), &&x \in \Omega, i=1,...,m.\\
\end{aligned}
\right.
\end{equation}
In addition,  we can see from careful analysis of the proof of Theorem \ref{globalmixed} that if $a>0$ such that $\|v\|_{\infty,\Omega}\le a$ and $T>0$, then there exists $C>0$ dependent on $T$, $a$, $\mu$ from \hyperlink{item:(DIFF)}{(DIFF)} and the parameters in \hyperlink{item:(INT-SUM)}{(INT-SUM)} and \hyperlink{item:(POLY)}{(POLY)} such that
$$\|u(\cdot,t)\|_{\infty,\Omega}\le C\quad\text{for all }0\le t\le T.$$
Interestingly, it is not possible to obtain a similar result for the system 
\begin{equation} \label{eq:2.19rev}
\left\{
\begin{aligned}
 \frac{\partial}{\partial t} u_i(x,t) &= d_i \Delta u_i(x,t) + f_i(u(x,t)), &&x \in \Omega, \ t > 0, \ i =1,....,m_1, \\
  \frac{\partial}{\partial t} u_i(x,t) &= \Gamma_i (u_i)(x,t) + f_i(u(x,t)), &&x \in \Omega, \ t \ge 0, \ i =m_1+1,....,m, \\
  \nabla u_i(x,t)\cdot\eta&=0, &&x\in\partial\Omega, \ t> 0, \ i=1,....,m_1,\\
 u_i(x,0) &= v_i(x), &&x \in \Omega, i=1,...,m.\\
\end{aligned}
\right.
\end{equation}
The smoothing of the diffusion in the last $m-m_1$ PDEs in (\ref{eq:2.19}) plays an important role, and this is absent from the system in (\ref{eq:2.19rev}). We illustrate this by proving a result for the simple system below.
 \begin{equation}\label{eq:2.19revex}
 \left\{
\begin{aligned}
 \frac{\partial}{\partial t} u(x,t) &= d\Delta u(x,t) - u(x,t)v(x,t)^2, &&x \in \Omega, t > 0,\\
  \frac{\partial}{\partial t} v(x,t) &= \int_\Omega\varphi(x,y)(v(y,t)-v(x,t))dy + u(x,t)v(x,t)^2, &&x \in \Omega, t \ge 0, \\
  \nabla u(x,t)\cdot\eta&=0, &&x\in\partial\Omega, t>0,\\
 u(x,0) &= u_0(x),\,v(x,0)=v_0(x) &&x \in \Omega.\\
\end{aligned}
\right.
\end{equation}
Here $d>0$, $u_0\in C^2(\overline{\Omega},\mathbb{R}_+)$ such that $\nabla u_0(x)\cdot\eta=0$ for all $x\in\partial\Omega$, and $v_0\in C(\overline{\Omega},\mathbb{R}_+)$. It is a simple matter to prove (\ref{eq:2.19revex}) has a unique component-wise nonnegative solution on a maximal interval of existence. The following proposition and proof are motivated by the statement and proof of Proposition 3 in \cite{hollis1987global}.

\begin{proposition}\label{proprev}
Suppose $u_0$ and $v_0$ satisfy the conditions above, $\varphi$ satisfies \hyperlink{item:(DIFF)}{(DIFF)}, $T>0$, $a,b>0$ such that $0<\mu<ab$, and there exists $C_{T,d,\mu,a,b}>0$ so that if $\|u_0\|_{\infty,\Omega}\le a$ and $\|v_0\|_{\infty,\Omega}\le b$ and $u$ and $v$ are the nonnegative solutions to (\ref{eq:2.19revex}) for $0\le t< T$, then $\|u(\cdot,t)\|_{\infty,\Omega},\|v(\cdot,t)\|_{\infty,\Omega}\le C_{T,d,\mu,a,b}$ for all $0\le t< T$. Then $T\le\frac{\ln(ab)-\ln(ab-\mu)}{\mu}$.
\end{proposition}

\noindent Note that if we let $\mu\to0^+$ in Proposition \ref{proprev} then we obtain a result for $\mu=0$ which gives $T\le \frac{1}{ab}$, and this agrees with Proposition 3 in \cite{hollis1987global}.

\begin{proof}
Pick $\tilde{x}\in\Omega$. For each $k\in\mathbb{N}$ let $\tilde{v}_{k}\in C(\overline{\Omega},\mathbb{R}_+)$ such that $\tilde{v}_{k}(x)\le b$ for all $x\in\overline{\Omega}$, $\tilde{v}_{k}(\tilde{x})=b$ and $\tilde{v}_{k}(x)\to0$ as $k\to\infty$ for all $x\ne \tilde{x}$. Now let $u_k$ and $v_k$ be the unique nonnegative solutions to (\ref{eq:2.19revex}) with $u_0\equiv a$ and $v_0=\tilde{v}_{k}$. Note that $u_k\le a$. If we multiply both sides of the PDE for $v_k$ in (\ref{eq:2.19revex}) by $v_k$ and integrate over $\Omega$ we find that
$$\frac{1}{2}\frac{\partial}{\partial t}\int_\Omega v_k(x,t)^2dx\le a C_{T,d,\mu,a,b}\int_\Omega v_k(x,t)^2dx,$$
implying 
$$\int_\Omega v_k(x,t)^2dx\le e^{2aC_{T,\mu,a,b}t}\int_\Omega \tilde{v}_k(x)^2dx\to0\,\text{as }k\to\infty.$$
As a result, since $u_k$ and $v_k$ are sup norm bounded on $Q_{0,T}$ and $\int_\Omega v_k(x,t)^2dx\to0$ as $k\to\infty$ we are assured that $u_kv_k^2\to0$ in $L^p(Q_{0,T})$ for every $1\le p<\infty$. So, standard parabolic estimates imply $\|u_k-a\|_{\infty,\Omega\times(0,T)}\to0$ as $k\to\infty$. Now we use \hyperlink{item:(DIFF)}{(DIFF)} to conclude that
$$\frac{\partial}{\partial t}v_k(x,t)+\mu v_k(x,t)\ge u_k(x,t)v_k(x,t)^2.$$
As a result,
$$\frac{\partial}{\partial t}\left(e^{\mu t}v_k(x,t)\right)\ge e^{-\mu t}u_k(x,t)\left(e^{\mu t}v_k(x,t)\right)^2.$$
So, if we define $V_k(x,t)=e^{\mu t}v_k(x,t)$ and $U_k(x,t)=e^{-\mu t}u_k(x,t)$ then 
$$\frac{\partial}{\partial t}V_k(x,t)\ge U_k(x,t)V_k(x,t)^2,$$
implying
\begin{equation*}
V_k(x,t)\ge \frac{\tilde{v}_k(x)}{1-\tilde{v}_k(x)\int_0^t U_k(x,s)ds},
\end{equation*}
and consequently, since $|V_k|_{\infty,\Omega\times(0,T)}\le e^{\mu T}C_{T,d,\mu,a,b}$, it must be the case that 
\begin{equation}\label{nearTbound}
\tilde{v}_k(x)\int_0^t U_k(x,s)ds<1\,\text{for all }(x,t)\in\overline\Omega\times[0,T). 
\end{equation}
So, if we set $x=\tilde{x}$ and recall $\tilde{v}(\tilde{x})=b$ and $u_k(\tilde{x},t)\to a$ as $k\to\infty$, then
$$U_k(\tilde{x},t)=e^{-\mu t}u_k(\tilde{x},t)\to a e^{-\mu t}\,\text{as }k\to\infty.$$ 
Therefore, (\ref{nearTbound}) implies $\frac{ab}{\mu}\left(1-e^{-\mu T}\right)\le1$, and a simple calculation gives the result.
\end{proof}

Proposition \ref{proprev} does not rule out the possibility of global existence in (\ref{eq:2.19revex}), but it does rule out the possibility of obtaining bounds similar to the ones that can be obtained on finite time intervals in the proof of Theorem \ref{globalmixed}. 

\section{Supporting Lemmas}

In this chapter we state and prove two results, including a basic comparison principle, and then we list several known results that are needed to prove our main results. The set $\Omega$, its boundary $\partial\Omega$ and $\varphi$ satisfy the assumptions of section 2.

\begin{lemma}\label{lem:3.1} Suppose $v,w\in C(\overline\Omega,\mathbb{R})$ and we define 
$$v_-(x)=\left\{\begin{array}{cc}-v(x),&\text{when }v(x)\le0\\0,&\text{when }v(x)>0\end{array}\right..$$
Then
\begin{equation}\label{eq:3.1}
\int_\Omega\int_\Omega v(x)\varphi(x,y)(w(y)-w(x)) \ dydx=-\frac{1}{2}\int_\Omega\int_\Omega (v(y)-v(x))\varphi(x,y)(w(y)-w(x)) \ dydx
\end{equation}
and
\begin{equation}\label{eq:3.2}
\int_\Omega\int_\Omega v_-(x)\varphi(x,y)(v(y)-v(x)) \ dydx \ge 0.
\end{equation}
\end{lemma}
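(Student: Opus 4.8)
The plan is to prove both identities by exploiting the symmetry $\varphi(x,y)=\varphi(y,x)$ from \hyperlink{item:(DIFF)}{(DIFF)}. For the identity \eqref{eq:3.1}, I would denote the left-hand side by $I$ and produce a second representation of $I$ by interchanging the names of the integration variables $x$ and $y$. Since $\varphi(y,x)=\varphi(x,y)$, this gives
$$I=\int_\Omega\int_\Omega v(y)\varphi(x,y)(w(x)-w(y))\,dy\,dx=-\int_\Omega\int_\Omega v(y)\varphi(x,y)(w(y)-w(x))\,dy\,dx.$$
Averaging this representation with the original one collapses the $v(x)$ and $v(y)$ contributions into the single factor $v(x)-v(y)=-(v(y)-v(x))$, which yields \eqref{eq:3.1} after dividing by two. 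The only point requiring care here is bookkeeping the signs, namely tracking that the variable swap sends the factor $w(y)-w(x)$ to its negative.

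For the inequality \eqref{eq:3.2}, since $v_-$ is the negative part of the continuous function $v$, it is itself continuous, so \eqref{eq:3.1} applies with $v$ replaced by $v_-$ and $w$ replaced by $v$. This converts the one-sided integral into the symmetrized form
$$\int_\Omega\int_\Omega v_-(x)\varphi(x,y)(v(y)-v(x))\,dy\,dx=\frac{1}{2}\int_\Omega\int_\Omega \bigl(v_-(x)-v_-(y)\bigr)\,\varphi(x,y)\,(v(y)-v(x))\,dy\,dx.$$
Because $\varphi\ge 0$, it then suffices to check that the product $\bigl(v_-(x)-v_-(y)\bigr)\bigl(v(y)-v(x)\bigr)$ is nonnegative at every pair $(x,y)$.

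I would verify this pointwise inequality by a short case analysis on the signs of $a:=v(x)$ and $b:=v(y)$, using $v_-(x)=\max\{-a,0\}$ and $v_-(y)=\max\{-b,0\}$. When $a,b\ge 0$ the first factor vanishes; when $a,b<0$ the product equals $(b-a)^2\ge 0$; and in each of the two mixed cases a direct computation shows that the two factors carry the same sign, so the product is again nonnegative. Integrating this pointwise bound over $\Omega\times\Omega$ against $\varphi\ge0$ gives \eqref{eq:3.2}.

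The computations are elementary, so there is no serious obstacle. The most error-prone step is simply the sign bookkeeping: in \eqref{eq:3.1} one must correctly track the sign flip induced by the variable swap, and in \eqref{eq:3.2} one must handle all four sign regimes of $(v(x),v(y))$ to confirm the integrand is nonnegative.
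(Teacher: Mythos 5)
Your proposal is correct and follows essentially the same route as the paper: both parts rest on symmetrizing the double integral via the swap $x\leftrightarrow y$ and the symmetry of $\varphi$, and then applying \eqref{eq:3.1} with $v_-$ in place of $v$ to reduce \eqref{eq:3.2} to the pointwise nonnegativity of $\bigl(v_-(x)-v_-(y)\bigr)\bigl(v(y)-v(x)\bigr)$. The only (immaterial) difference is that the paper verifies this last step by writing $v=v_+-v_-$ and discarding the nonnegative cross terms to leave $\tfrac12\int_\Omega\int_\Omega\varphi(x,y)\bigl(v_-(y)-v_-(x)\bigr)^2\,dy\,dx$, whereas you verify it by a direct four-case sign analysis; both checks are valid.
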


\begin{proof}
We start with (\ref{eq:3.1}),
\begin{flalign*} 
 & \int_{\Omega} \int_{\Omega} v(x)\varphi(x,y)(w(y)-w(x)) \ dydx \\
 & = \frac{1}{2} \int_{\Omega} \int_{\Omega} v(x)\varphi(x,y)(w(y)-w(x)) \ dydx + \frac{1}{2} \int_{\Omega} \int_{\Omega} v(x)\varphi(x,y)(w(y)-w(x)) \ dydx \\
 & = \frac{1}{2} \int_{\Omega} \int_{\Omega} v(x)\varphi(x,y)(w(y)-w(x)) \ dydx + \frac{1}{2} \int_{\Omega} \int_{\Omega} v(y)\varphi(y,x)(w(x)-w(y)) \ dxdy \hspace{0.12 in} [\text{exchanging } x \text{ and } y] \\
 & = \frac{1}{2} \int_{\Omega} \int_{\Omega} v(x)\varphi(x,y)(w(y)-w(x)) \ dydx + \frac{1}{2} \int_{\Omega} \int_{\Omega} v(y)\varphi(x,y)(w(x)-w(y)) \ dydx \hspace{0.2 in}  [\because \ \varphi(x,y)=\varphi(y,x)]\\
 & = -\frac{1}{2} \int_{\Omega} \int_{\Omega} (v(y)-v(x)) \varphi(x,y)(w(y)-w(x)) \ dydx 
\end{flalign*}

To prove (\ref{eq:3.2}), we note that $v(x)=v_+(x) - v_-(x)$ where
$$v_+(x)=\left\{\begin{array}{cc}v(x),&\text{when }v(x)\ge0\\ 0,&\text{when }v(x)<0\end{array}\right..$$
We note that, $v_-(x)=-v(x) \ge 0$ when $v(x) \le 0$. Then, using (\ref{eq:3.1}) we get,
\begin{flalign*} 
\int_{\Omega} \int_{\Omega} v_-(x)\varphi(x,y)(v(y)-v(x)) \ dydx  = -\frac{1}{2} \int_{\Omega} \int_{\Omega} (v_-(y) - v_-(x)) \varphi(x,y)(v(y)-v(x)) \ dydx \\
  =- \frac{1}{2} \int_{\Omega} \int_{\Omega}  (v_-(y) - v_-(x)) \varphi(x,y)(v_+(y) - v_-(y) - v_+(x) + v_-(x)) \ dydx \\
  \ge- \frac{1}{2} \int_{\Omega} \int_{\Omega}  \varphi(x,y)(-v_-(y)^2+v_-(y)v_-(x)+v_-(x)v_-(y)-v_-(x)^2) \ dydx \\
  = \frac{1}{2} \int_{\Omega} \int_{\Omega}  \varphi(x,y)(v_-(y)-v_-(x))^2 \ dydx.
 \end{flalign*}
 
 Since $v_-(y)v_+(y)=0$, $v_-(x)v_+(x)=0$, $v_-(y)v_+(x)\ge0$, $v_-(x)v_+(y)\ge0$ and $\varphi(x,y)\ge0$. Therefore,
 $$\int_{\Omega} \int_{\Omega} v_-(x)\varphi(x,y)(v(y)-v(x)) \ dydx \ge 0.$$
\end{proof}

\begin{lemma} \label{lem:3.2} Suppose $T>0$, $F$ is smooth, $\zeta_0$ is continuous, and $(\zeta,\xi) \in C^1([0,T),C(\overline{\Omega}) \times C(\overline{\Omega}))$ satisfy 
\begin{equation*} 
\left\{
\begin{aligned}
     \zeta_t & \ge K(\zeta)+F(\zeta), &&[0,T),\\
 \zeta & \ge \zeta_0, &&t=0,
\end{aligned}
\right.
\end{equation*}
\textit{and}
\begin{equation*}
\left\{
\begin{aligned}
 \xi_t & = K(\xi)+F(\xi), &&[0,T),\\
 \xi & = \zeta_0, &&t=0,
\end{aligned}
\right.
\end{equation*}
where
\begin{flalign*}
   K(z)(x)=\int_{\Omega} \varphi(x,y)(z(y)-z(x)) \ dy.
\end{flalign*}
Then $\zeta \ge \xi$.
\end{lemma}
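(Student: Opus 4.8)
The plan is to prove the comparison by an $L^2$ energy estimate on the \emph{negative part} of the difference of the two functions, where the nonlocal dissipation inequality (\ref{eq:3.2}) of Lemma \ref{lem:3.1} plays precisely the role that integration by parts plays for the Laplacian. Set $w=\zeta-\xi$; the goal is to show $w\ge 0$ on $\overline\Omega\times[0,T)$. Subtracting the supersolution inequality for $\zeta$ from the equation for $\xi$ and using the linearity of $K$ (so that $K(\zeta)-K(\xi)=K(w)$), I would obtain the differential inequality
\begin{equation*}
w_t \ge K(w) + \bigl(F(\zeta)-F(\xi)\bigr)\quad\text{on }\Omega\times[0,T).
\end{equation*}
Because $\zeta,\xi\in C^1\left([0,T),C(\overline\Omega)\right)$, both are bounded on $\overline\Omega\times[0,T']$ for each fixed $T'<T$; since $F$ is smooth it is Lipschitz on the compact range of $(\zeta,\xi)$, so there is a constant $C=C(T')$ with $|F(\zeta)-F(\xi)|\le C|w|$.

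Fix $T'<T$ and introduce the energy $E(t)=\frac{1}{2}\int_\Omega w_-(x,t)^2\,dx$, with $w_-$ the negative part as defined in Lemma \ref{lem:3.1}. At $t=0$ the hypotheses give $\zeta(\cdot,0)\ge \zeta_0=\xi(\cdot,0)$, hence $w(\cdot,0)\ge 0$, so $w_-(\cdot,0)\equiv 0$ and $E(0)=0$. The key technical remark is that the scalar map $s\mapsto (s_-)^2$ is $C^1$ with derivative $-2s_-$, which smooths out the kink of $w_-$; consequently $t\mapsto w_-(\cdot,t)^2$ is differentiable, and differentiating under the integral (justified by the $C^1$ regularity in $t$ together with continuity on $\overline\Omega$) gives
\begin{equation*}
E'(t)=-\int_\Omega w_-\,w_t\,dx.
\end{equation*}

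Multiplying the differential inequality by $-w_-\le 0$ (which reverses the inequality) and integrating, I would then bound
\begin{equation*}
E'(t)\le -\int_\Omega w_-(x)\,K(w)(x)\,dx \;-\; \int_\Omega w_-\bigl(F(\zeta)-F(\xi)\bigr)\,dx.
\end{equation*}
The first term equals $-\int_\Omega\int_\Omega w_-(x)\varphi(x,y)(w(y)-w(x))\,dy\,dx$, which is $\le 0$ by (\ref{eq:3.2}) applied with $v=w$; this is the crucial step, replacing the local identity $\int w_-\Delta w=\int|\nabla w_-|^2\ge 0$. For the second term, the Lipschitz bound $|F(\zeta)-F(\xi)|\le C|w|$ together with $w_-|w|=w_-^2$ on $\{w<0\}$ produces a bound by $2C\,E(t)$. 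Hence $E'(t)\le 2C\,E(t)$ with $E(0)=0$, and Gr\"onwall's inequality forces $E\equiv 0$ on $[0,T']$. Therefore $w_-\equiv 0$, i.e. $\zeta\ge\xi$ on $\overline\Omega\times[0,T']$; since $T'<T$ was arbitrary, $\zeta\ge\xi$ on $\overline\Omega\times[0,T)$.

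I expect the main obstacle to be the analytic bookkeeping near the nonsmooth set $\{w=0\}$ when differentiating $E$, and the correct orientation of (\ref{eq:3.2}): one must verify that the quadratic $(s_-)^2$ is genuinely $C^1$ so that $E'$ is valid, and that the inequality is applied with test weight $v_-=w_-$ so that the nonlocal contribution carries the dissipative sign. Once these two points are secured, the uniform Lipschitz bound for $F$ on compact time intervals and the Gr\"onwall step are routine.
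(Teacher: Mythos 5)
Your proposal is correct and follows essentially the same route as the paper's own proof: both form the difference $w=\zeta-\xi$, test the resulting differential inequality against the negative part $w_-$, discard the nonlocal term using inequality (\ref{eq:3.2}) of Lemma \ref{lem:3.1}, control the reaction difference by a Lipschitz (mean-value) bound on compact time intervals, and close with Gr\"onwall from $E(0)=0$. The only cosmetic difference is that the paper invokes the mean value theorem to write $F(\zeta)-F(\xi)=F'(\theta)w$ before bounding the coefficient, whereas you bound $|F(\zeta)-F(\xi)|\le C|w|$ directly; the estimates are identical.
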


\begin{proof} Consider $\sigma=\zeta-\xi$ on $\Omega \times [0,T)$ with $\sigma(0)=\zeta(0) -\xi(0) \ge \zeta_0 - \zeta_0 \ge 0$. 
Then,
\begin{flalign*}
    \sigma_t & = \zeta_t - \xi_t\\
=> \ \sigma_t & \ge K(\zeta) - K(\xi) + F(\zeta) - F(\xi) \\  
=> \ \sigma_t & \ge K(\sigma) + F(\zeta) - F(\xi) \\
=> \ \sigma_t & \ge \int_{\Omega} \varphi(x,y) (\sigma(y,t)-\sigma(x,t))\ dy + F(\zeta) - F(\xi). 
\end{flalign*}
Since, $F$ is smooth, the mean value theorem implies there exists a $\theta(x,t) \in [\zeta(x,t),\xi(x,t)]$ such that
\begin{flalign*}
    F(\zeta(x,t))-F(\xi(x,t))=F'(\theta(x,t))(\zeta(x,t)-\xi(x,t))= L(x,t) \sigma(x,t) 
\end{flalign*}
where $L(x,t)=F'(\theta(x,t))$. 
As a result,
\begin{flalign}\label{eq:3.3}
\sigma_t(x,t) \ge \int_{\Omega} \varphi(x,y) (\sigma(y,t)-\sigma(x,t))\ dy + L(x,t) \sigma(x,t). 
\end{flalign}
Multiply both sides of (\ref{eq:3.3}) by $\sigma_- (x,t)$ and integrate over $\Omega$. Then
\begin{flalign}\label{eq:3.4}
-\frac{1}{2}\frac{d}{dt}\int_\Omega \sigma_-(x,t)^2dx\ge \int_\Omega\int_\Omega \sigma_-(x,t)\varphi(x,y)(\sigma(y,t)-\sigma(x,t))\ dydx-\int_\Omega L(x,t)\sigma_-(x,t)^2dx.
\end{flalign}
So, lemma \ref{lem:3.1} implies
\begin{flalign}\label{eq:3.5}
-\frac{1}{2}\frac{d}{dt}\int_\Omega \sigma_-(x,t)^2dx\ge -\int_\Omega L(x,t)\sigma_-(x,t)^2dx.
\end{flalign}
Now suppose $0<\tau<T$. Then there exists $K>0$ so that $-K\le -L(x,t)$ for all $(x,t)\in\overline\Omega\times[0,\tau]$. As a result,
$$-\frac{1}{2}\frac{d}{dt}\int_\Omega \sigma_-(x,t)^2dx\ge -K\int_\Omega \sigma_-(x,t)^2dx,$$
which implies 
$$\frac{d}{dt}\left(e^{-2Kt}\int_\Omega\sigma_-(x,t)^2dx\right)\le0.$$
Therefore $e^{-2Kt}\int_\Omega\sigma_-(x,t)^2dx\le \int_\Omega\sigma_-(x,0)^2dx= 0$ for all $(x,t)\in\overline\Omega\times[0,\tau]$.
As a result, $\sigma_-\equiv0$ on $\overline\Omega\times[0,\tau]$, which gives $\sigma\ge 0$ on $\overline\Omega\times[0,\tau]$. Since $0<\tau<T$ was arbitrary, the result follows.
\end{proof}

Part (i) of the following result is given in chapter 3, section 5 of \cite{ladyzhenskaia1968linear}, and parts (ii) and (iii) are given in Lemma 3.10 of \cite{morgan1990boundedness}.

\begin{lemma} \label{lem:3.3} \cite{ladyzhenskaia1968linear} Let $D>0$, $0 \le \tau <T$, $\theta \ge 0$ and $\theta \in L^p(Q_{\tau,T})$ with $p>1$ and $\Vert \theta \Vert_{p,Q_{\tau,T}}=1$.Then there exists a unique nonnegative solution $\phi \in W_p^{(2,1)}(Q_{\tau,T})$ to
\begin{equation} \label{eq:3.6}
\left\{
\begin{aligned}
    \frac{\partial \phi}{\partial t} & = - D \Delta \phi - \theta, &&Q_{\tau,T},\\
    \frac{\partial \phi}{\partial \eta} & = 0, &&\partial\Omega \times (\tau,T),\\
    \phi & = 0, \hspace{0.74 in} &&\Omega \times \{T\}.
\end{aligned}
\right.
\end{equation}
In addition, there exists $C_{p,D,T-\tau}>0$ which can be chosen to be nondecreasing in $T-\tau$ (independent of $\theta$) so that
\begin{flalign*} 
\Vert \phi \Vert_{W_p^{(2,1)}(Q_{\tau,T})} \le C_{p,D,T-\tau},
\end{flalign*}
 and in addition:
(i) if $1 < p < \frac{n+2}{2}$ and $1 \le q \le\frac{(n+2)p}{n+2-2p}$, then 
\begin{flalign*} 
 \phi \in L^q(Q_{\tau,T}), \ \text{and} \ \Vert \phi \Vert_{q,Q_{\tau,T}} \le C_{p,D,T-\tau},
\end{flalign*}

(ii) if $1<p<\frac{n+2}{2}$ and $1\le q\le\frac{np}{n+2-2p}$ then 
\begin{flalign*} 
  \Vert \phi(\cdot,t) \Vert_{q,\Omega} \le C_{p,D,T-\tau}\,\text{for all }t\in[\tau,T],
\end{flalign*}

(iii) if $p>\frac{n+2}{2}$ then 
\begin{flalign*} 
  \Vert \phi \Vert_{\infty,Q_{\tau,T}} \le C_{p,D,T-\tau}.
\end{flalign*}
\end{lemma}

The following result is typically an exercise in texts studying Sobolev spaces and finite difference estimates. It is a standard application of the density of smooth functions in Sobolev spaces, along with Taylor's formula with integral remainder and Minkowski's integral inequality.

\begin{lemma}\label{lem:2nddiff}
Suppose $\psi\in C^\infty(\mathbb{R}_+,\mathbb{R}_+)$ such that
$$\int_{\mathbb{R}^n}\psi(|z|)|z|^2dz=M<\infty.$$
If $1<p<\infty$, $p'=\frac{p}{p-1}$, $g\in W_p^{(2)}(\mathbb{R}^n)$ and $w\in L^{p'}(\mathbb{R}^n)$ then 
$$\left|\int_{\mathbb{R}^n} \int_{\mathbb{R}^n}\psi(|x-y|)g(x)(w(y)-w(x))dydx\right|\le \frac{M}{2}\|g\|_{W_p^{(2)}(\mathbb{R}^n)}\|w\|_{p',\mathbb{R}^n}.$$
\end{lemma}

\begin{lemma} \label{lem:3.5} \cite{morgan2023global} Suppose $m \in \mathbb{N},$ $\theta\in\mathbb{R}_+^m$ with $\theta_1, \dots, \theta_{m}>0$, $T>0$, $p\in\mathbb{N}$, and
$u \in C^{(0,1)}(\overline{\Omega} \times [0,T),\mathbb{R}_+^m)$. We define $\mathcal{H}_p[u]$ by
\begin{flalign*}
   \mathcal{H}_p[u](t)=\int_\Omega\sum_{\beta \in \mathbb{Z}_+^{m},\vert \beta \vert = p} \binom{p}{\beta} \theta^{\beta^2} u(x,t)^{\beta}dx\quad\text{for }0\le t<T,
\end{flalign*}
with the convention $u^{\beta}=\Pi_{i=1}^m u_i^{\beta_i}$, $\theta^{\beta^2}=\Pi_{i=1}^m \theta_i^{\beta^2_i}$, $|\beta|=\sum_{i=1}^m \beta_i$, and $\binom{p}{\beta}=\frac{p!}{\beta_1!\beta_2!\dots\beta_m!}$. Then,
\begin{flalign*}
\frac{\partial}{\partial t}\mathcal{H}_0[u](t)=0, \ \frac{\partial}{\partial t}\mathcal{H}_1[u](t)=\int_\Omega\sum_{j=1}^{m} \theta_j \frac{\partial}{\partial t} u_j(x,t)dx,
\end{flalign*}
\textit{and for $p \in \mathbb{N}$ such that $p \ge 2$},
\begin{flalign*}
\frac{\partial}{\partial t} \mathcal{H}_p[u](t)= \int_\Omega\sum_{\vert \beta \vert =p-1} \binom{p}{\beta} \theta^{\beta^2} u(x,t)^{\beta} \sum_{j=1}^{m} \theta_j^{2 \beta_j+1} \frac{\partial}{\partial t} u_j(x,t)dx.
\end{flalign*}
\end{lemma}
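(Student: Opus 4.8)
The plan is to differentiate $\mathcal{H}^p[u]$ directly under the integral sign and then reorganize the resulting sum by a single change of summation index. Since $u\in C^{(0,1)}(\overline{\Omega}\times[0,T),\mathbb{R}_+^m)$ and $\overline{\Omega}$ is compact, the integrand of $\mathcal{H}^p[u]$ is a polynomial in the components $u_i$ whose $t$-derivative is continuous, hence bounded on $\overline{\Omega}\times[0,\tau]$ for each $\tau<T$; this justifies interchanging $\partial/\partial t$ with $\int_\Omega$. The cases $p=0$ and $p=1$ are immediate. For $p=0$ the only multi-index with $|\beta|=0$ is $\beta=0$, so $\mathcal{H}^0[u](t)=|\Omega|$ is constant and its derivative vanishes. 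For $p=1$ the indices are the standard basis vectors $\beta=e_j$, each contributing $\theta_j u_j$, so $\mathcal{H}^1[u](t)=\int_\Omega\sum_{j=1}^m\theta_j u_j\,dx$, whose derivative is the stated expression. I therefore concentrate on $p\ge 2$, noting that the same computation in fact covers all $p\ge 1$.

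Writing $\gamma$ for the summation index with $|\gamma|=p$, the product rule gives $\frac{\partial}{\partial t}\left(u^\gamma\right)=\sum_{j:\gamma_j\ge1}\gamma_j\,u^{\gamma-e_j}\,\frac{\partial}{\partial t}u_j$, where the terms with $\gamma_j=0$ automatically drop out. Substituting this into the differentiated integrand and interchanging the order of the two summations, I would set $\beta=\gamma-e_j$, so that $|\beta|=p-1$ and $\gamma=\beta+e_j$ with $\gamma_j=\beta_j+1$. Each admissible pair $(\gamma,j)$ with $\gamma_j\ge1$ corresponds bijectively to a pair $(\beta,j)$ with $|\beta|=p-1$ and $j\in\{1,\dots,m\}$, so the double sum becomes $\sum_{|\beta|=p-1}\sum_{j=1}^m$.

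What remains is purely algebraic: I must check that the coefficients collapse to the claimed form. First, since $\binom{p}{\beta+e_j}=\frac{p!}{(\beta_j+1)!\,\prod_{i\ne j}\beta_i!}$, multiplying by the factor $\gamma_j=\beta_j+1$ produced by the product rule yields $\binom{p}{\beta+e_j}(\beta_j+1)=\frac{p!}{\prod_{i}\beta_i!}=\binom{p}{\beta}$, with the convention that $\binom{p}{\beta}$ denotes $p!/\prod_i\beta_i!$ even though $|\beta|=p-1$. Second, the componentwise squares $(\beta+e_j)^2$ and $\beta^2$ differ only in the $j$-th slot, by $(\beta_j+1)^2-\beta_j^2=2\beta_j+1$, which gives $\theta^{(\beta+e_j)^2}=\theta_j^{2\beta_j+1}\,\theta^{\beta^2}$. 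Inserting both identities and factoring $u^\beta$ and $\theta^{\beta^2}$ out of the inner $j$-sum produces exactly $\int_\Omega\sum_{|\beta|=p-1}\binom{p}{\beta}\theta^{\beta^2}u^\beta\sum_{j=1}^m\theta_j^{2\beta_j+1}\frac{\partial}{\partial t}u_j\,dx$, as asserted.

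I do not anticipate a genuine obstacle here, since the statement is an identity and the only analytic input, differentiation under the integral, is routine given the compactness of $\overline{\Omega}$ and the regularity of $u$. The single point demanding care is the bookkeeping in the reindexing: one must confirm that the $\gamma_j=0$ contributions are correctly absent and that the correspondence $(\gamma,j)\leftrightarrow(\beta,j)$ is a clean bijection, in tandem with the two coefficient identities above. Those identities are precisely where the nonstandard weight $\binom{p}{\beta}$ and the quadratic exponents $\beta_i^2$ appearing in $\theta^{\beta^2}$ are engineered so that the telescoping between levels $p$ and $p-1$ succeeds.
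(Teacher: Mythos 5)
Your proof is correct. Note that the paper itself offers no proof of this lemma --- it is quoted verbatim from \cite{morgan2023global} --- so there is no in-paper argument to compare against; your derivation (differentiation under the integral justified by the continuity of $\partial_t u$ on compact sets, the product rule $\partial_t(u^\gamma)=\sum_j \gamma_j u^{\gamma-e_j}\partial_t u_j$, the reindexing $\beta=\gamma-e_j$, and the two coefficient identities $\binom{p}{\beta+e_j}(\beta_j+1)=\binom{p}{\beta}$ and $\theta^{(\beta+e_j)^2}=\theta_j^{2\beta_j+1}\theta^{\beta^2}$) is exactly the standard telescoping computation this energy functional is designed to admit, and all the bookkeeping checks out, including the degenerate cases $p=0,1$.
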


\noindent Note that since all $u_i$ are non-negative and $\theta_1, \dots, \theta_{m}>0$, there exist $\alpha_\theta,M_\theta>0$ so that $$\alpha_\theta\|u(\cdot,t)\|_{p,\Omega}\le( \mathcal{H}_p[u](t))^{\frac{1}{p}}\le M_\theta\|u(\cdot,t)\|_{p,\Omega}$$
for all $0\le t<T$.

\begin{lemma} \label{prop:3.1} \cite{andreu2010nonlocal} 
Let $T>0$ and $d>0$. Suppose $\{f_j\}\subset C\left(\overline\Omega\times[0,T]\right)$ such that
\begin{flalign*}
 \lim_{j \to \infty}  \Vert f_j -f \Vert_{L^1(\Omega\times(0,T))}  =0
\end{flalign*}
for some $f \in L^1(Q_{0,T})$. Let $z^0 \in C(\overline\Omega)$, $\psi \in C^{\infty}(\mathbb{R_+,\mathbb{R_+}})$ be a non-increasing function such that $\psi(0)>0$,
\begin{flalign*}
\int_{\mathbb{R}^n} \vert x \vert^2 \psi(|x|) \ dx<\infty,
\end{flalign*}
and define $\varphi_j(x,y)=j^{n+2}\psi(j|x-y|)$ for each $j\in\mathbb{N}$. 
Suppose for $j \in \mathbb{N}$, $z_j \in C^1\left([0,\infty),C(\overline\Omega)\right)$ is the solution to
\begin{equation*}
\left\{
\begin{aligned}
 \frac{\partial}{\partial t} z_j & = d \int_\Omega \varphi_j(x,y)(z_j(y)-z_j(x)) + f_j, &&(x,t)\in  Q_{0,T} \\
 z_j & = z^0, &&(x,t)\in \Omega\times\{0\}.
\end{aligned}
\right.
\end{equation*}
If there exists $z_{\infty} \in L^1(Q_{0,T})$ such that
\begin{flalign*}
 \lim_{j \to \infty} \Vert z_j -z_{\infty} \Vert_{1,Q_{0,T}} =0,
\end{flalign*}
then $z_{\infty}=z$, where $z$ denotes the unique weak solution to
\begin{equation*}
\left\{
\begin{aligned}
  \frac{\partial}{\partial t} z & = D \Delta z + f, &&Q_{0,T},\\
  \frac{\partial}{\partial\eta} z  & = 0, &&\partial\Omega\times(0,T), \\
  z & = z^0, &&\Omega\times\{0\},
\end{aligned}
\right.
\end{equation*}
where 
$$D := \frac{d}{2n} \int_{\mathbb{R}^n} \vert x \vert^2 \psi(x) \ dx.$$
\end{lemma}

\begin{lemma} \label{lem:3.6} \cite{laurencot2023nonlocal}
Assume \hyperlink{item:(DIFF)}{(DIFF)}, define $X=C(\overline\Omega)$, $X_+=\{z\in X | z\ge0\}$, and $\Gamma_{\varphi}(z)$ for $z\in X$ by
$$\Gamma_\varphi(z)=\int_\Omega\varphi(x,y)(z(y)-z(x))dy.$$ 
Then $\Gamma_{\varphi}\in \mathcal{L}(X)$ with $\Vert \Gamma_{\varphi} \Vert_{\mathcal{L}(X)} \le 2 \mu$, and $\Gamma_{\varphi}$ generates a uniformly continuous semigroup $(e^{t\Gamma_{\varphi}})_{t\ge 0}$ on $X$ satisfying 
\begin{flalign} \label{eq:3.8}
    \Vert e^{t\Gamma_{\varphi}}\Vert_{\mathcal{L}(X)} \le 1, \ \ \ \ \ t \ge 0.
\end{flalign}
Moreover, for $t\ge 0$ and $z\in X_+$,
\begin{flalign} \label{eq:3.9}
    e^{t\Gamma_{\varphi}}(z) \ge 0 \quad  \text{and } \quad e^{t\Gamma_{\varphi}} 1=1.
\end{flalign}  
\end{lemma}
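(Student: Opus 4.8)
The plan is to treat $\Gamma_\varphi$ as a bounded operator and to read off all four assertions—boundedness, generation of a uniformly continuous semigroup, the contraction estimate, and the invariance properties—from an algebraic splitting of $\Gamma_\varphi$ into a positive integral operator minus a nonnegative multiplication operator. Concretely, I would write
$$\Gamma_\varphi(z)(x)=\int_\Omega\varphi(x,y)z(y)\,dy-a(x)z(x),\qquad a(x):=\int_\Omega\varphi(x,y)\,dy,$$
and set $Bz(x)=\int_\Omega\varphi(x,y)z(y)\,dy$ and $M_a z=a\,z$, so that $\Gamma_\varphi=B-M_a$ on $X=C(\overline\Omega)$.

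First I would prove boundedness. For $z\in X$ and $x\in\overline\Omega$, the elementary bound $|z(y)-z(x)|\le 2\|z\|_\infty$ together with \hyperlink{item:(DIFF)}{(DIFF)} gives
$$|\Gamma_\varphi(z)(x)|\le\int_\Omega\varphi(x,y)\,|z(y)-z(x)|\,dy\le 2\|z\|_\infty\int_\Omega\varphi(x,y)\,dy\le 2\mu\,\|z\|_\infty,$$
so $\|\Gamma_\varphi\|_{\mathcal L(X)}\le 2\mu$. To confirm $\Gamma_\varphi z\in C(\overline\Omega)$ I would use that $\varphi$ is uniformly continuous on the compact set $\overline\Omega\times\overline\Omega$, so both $Bz$ and $a$ are continuous in $x$ by dominated convergence; hence $\Gamma_\varphi\in\mathcal L(X)$. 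Because $\Gamma_\varphi$ is a bounded operator on the Banach space $X$, standard semigroup theory yields the uniformly continuous semigroup $e^{t\Gamma_\varphi}=\sum_{k\ge0}t^k\Gamma_\varphi^k/k!$.

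Next I would establish positivity and the invariance of the constants. Since $\varphi\ge0$, $B$ maps $X_+$ into $X_+$, and with $c:=\|a\|_\infty\le\mu$ the operator
$$\Gamma_\varphi+cI=B+M_{c-a}$$
is a sum of two positive operators (as $c-a\ge0$), hence positive. Consequently $e^{t\Gamma_\varphi}=e^{-ct}e^{t(\Gamma_\varphi+cI)}$ is positive for every $t\ge0$, because $e^{t(\Gamma_\varphi+cI)}$ is a norm-convergent series of positive operators; this gives $e^{t\Gamma_\varphi}(z)\ge0$ for $z\in X_+$. For the invariance of $1$ I would simply compute $\Gamma_\varphi(1)(x)=\int_\Omega\varphi(x,y)(1-1)\,dy=0$, so that $\Gamma_\varphi^k 1=0$ for all $k\ge1$ and therefore $e^{t\Gamma_\varphi}1=1$.

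Finally, the contraction bound $\|e^{t\Gamma_\varphi}\|_{\mathcal L(X)}\le1$ follows by combining the previous two facts with the lattice structure of $C(\overline\Omega)$: for any positive operator $T$ on $C(\overline\Omega)$ one has $|Tf|\le T|f|\le\|f\|_\infty\,T1$ pointwise, whence $\|T\|_{\mathcal L(X)}=\|T1\|_\infty$. Applying this to $T=e^{t\Gamma_\varphi}$ and using $e^{t\Gamma_\varphi}1=1$ gives $\|e^{t\Gamma_\varphi}\|_{\mathcal L(X)}=\|1\|_\infty=1$. I expect the only genuinely delicate point to be the positivity of the semigroup when $a$ is non-constant: one must not argue directly from $\Gamma_\varphi=B-M_a$, since a difference of positive operators need not generate a positive semigroup, but rather through the shifted operator $\Gamma_\varphi+cI$, whose positivity is exactly the condition guaranteeing a positive semigroup. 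Everything else reduces to the two routine facts recorded above.
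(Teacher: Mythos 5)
Your proof is correct. Note that the paper itself does not prove this lemma: it is listed among the ``known results'' and attributed to \cite{laurenccot2023nonlocal}, so there is no in-paper argument to compare against. Your argument is the standard one and is complete: the bound $\Vert\Gamma_\varphi\Vert_{\mathcal L(X)}\le 2\mu$ from \hyperlink{item:(DIFF)}{(DIFF)}, the exponential series for a bounded generator, positivity of $e^{t\Gamma_\varphi}$ via the shift $\Gamma_\varphi+cI=B+M_{c-a}$, the computation $\Gamma_\varphi 1=0$, and the lattice identity $\Vert T\Vert_{\mathcal L(X)}=\Vert T1\Vert_\infty$ for positive $T$ on $C(\overline\Omega)$. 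One small simplification you could have exploited: \hyperlink{item:(DIFF)}{(DIFF)} asserts $\int_\Omega\varphi(x,y)\,dy=\mu$ as a constant independent of $x$, so $a\equiv\mu$ and $\Gamma_\varphi=B-\mu I$ exactly; then $e^{t\Gamma_\varphi}=e^{-\mu t}e^{tB}$ with $B$ positive and $\Vert B\Vert_{\mathcal L(X)}\le\mu$, which gives positivity and $\Vert e^{t\Gamma_\varphi}\Vert_{\mathcal L(X)}\le e^{-\mu t}e^{\mu t}=1$ without invoking the lattice argument (this is essentially the route taken in the cited reference). Your more general treatment, which is careful to pass through $\Gamma_\varphi+cI$ rather than arguing positivity directly from the difference $B-M_a$, is equally valid and covers the case of a non-constant mass $a(x)$.
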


\section{Proofs of Global Existence Results}

This section contains the proofs of Theorems \ref{global} and \ref{globalmixed}.

\subsection{Proof of Theorem \ref{global}}

Let $X=C(\overline{\Omega})$ and denote $X^{m}_+ := \{ z\in X^m : z \ge 0 \ \text{on} \ \Omega \}$.  Define $\Gamma_i=\Gamma_{\varphi_i}$ for $i=1,2,....,m$, where $\varphi_i=d_i \varphi$ and $\Gamma_{\varphi_i}$ is defined as in Lemma \ref{lem:3.1}, 
$$A:=\text{diag}\{\Gamma_1,\dots,\Gamma_m\}\quad \text{and}\quad f(z)= (f_i(z)).$$
Now for given $u_0=(u_i(x,0)=v_i(x)) \in X^m_+$ the IVP (\ref{eq:2.11}) is equivalent to
\begin{flalign} 
\frac{d}{dt}u=A(u)+f(u), \ \ \ t \ge 0, \ \ \ u(0)=u_0. \label{eq:4.1}
\end{flalign}
Since $A \in \mathcal{L}(X^m)$ and $f$ is locally Lipschitz continuous (therefore bounded on bounded set of $X^m$), basic semigroup theory implies (\ref{eq:4.1}) has a unique solution 
\begin{flalign*}
     u=(u_i) \in C^1([0,T_{max}),X^m)
\end{flalign*}
defined on a maximal time interval $[0,T_{max})$ with $T_{max} \in (0, \infty]$ and $(t,u_0) \mapsto u(t; u_0)$ defines a semiflow on $X^m$. Furthermore, if $T_{max}<\infty$ then $\limsup_{t\to T_{max}^-}\|u(t)\|_{\infty,\Omega}=\infty$.

We note that $u_i(x,t)$ is the solution of system (\ref{eq:2.11}). Now recall the positive and negative portions of functions as defined in Lemma \ref{lem:3.1}.
Similar to above, the system of initial value problems given by
\begin{equation*}
\left\{
\begin{aligned}
\frac{\partial}{\partial t}U_i(x,t) & = \int_{\Omega} d_i \varphi(x,y) (U_i(y,t)-U_i(x,t)) \ dy + f_i (U_+(x,t)), &&x \in \Omega, \ t \ge 0, \ i=1,2,\dots,m\\
U_i(x,0) & = v_i(x), &&x \in \Omega,\ i=1,2,\dots,m,
\end{aligned}
\right.
\end{equation*}
has a unique solution on an interval $[0,T_{max}^*)$ where $T_{max}^*\in(0,\infty]$.
If we multiply both sides of partial differential equation for $U_{i} (x,t)$ by $U_{i_-}(x,t)$ and integrate over $\Omega$ we find
\begin{flalign*}
\int_\Omega U_{i_-}(x,t) \frac{\partial}{\partial t} U_{i}(x,t) & =  \underbrace{d_i \int_{\Omega} \int_{\Omega} U_{i_-}(x,t) \varphi(x,y) (U_i(y,t)-U_i(x,t)) \ dy \ dx}_{\ge 0 \ \text{by lemma \ref{lem:3.1}}}  + \underbrace{\int_{\Omega} U_{i_-}(x,t) f_i (t,U_+(x,t)) \ dx}_{\ge 0 \ \text{by } \hyperlink{item:(QP)}{(QP)}} \\
=> \ \frac{d}{dt}\int_\Omega U_{i_-}(x,t)^2 \ dx & \le 0, \\
\text{and hence} \ \int_\Omega U_{i_-}(x,t)^2 \ dx & \le \int_\Omega U_{i_-}(x,0)^2 \ dx=0.
\end{flalign*}
Therefore $U_{i_-}(x,t) = 0$ for all $(x,t)\in\overline\Omega\times[0,T_{max}^*)$.
As a result, $U_{i_+}(x,t)=U_i(x,t)$, and consequently, the component-wise nonnegative solution to the above system solves the system (\ref{eq:2.11}). Since, the solution to (\ref{eq:2.11}) is unique, the solution of the system (\ref{eq:2.11}) is component-wise nonnegative.

To get the global solution, we need to show $T_{max}=\infty$. If $T_{max} < \infty$ then
\begin{equation} \label{eq:4.2}
    \limsup_{t \to T^-_{max}} \Vert u(t) \Vert_{\infty,\Omega} = \infty.
\end{equation}
For any $0 \le t < T_{max}$ and $i=1,...,m$ we have,
\begin{flalign*} 
 a_i \frac{\partial}{\partial t}u_{i}(x,t) & = d_i \int_{\Omega} \varphi(x,y) a_i (u_i(y,t)-u_i(x,t)) \ dy + a_i f_i(t,u(x,t)),
\end{flalign*}
so \hyperlink{item:(QBAL)}{(QBAL)} implies
\begin{flalign}\label{ThisOne} 
 \frac{\partial}{\partial t}\sum_{i=1}^m a_i u_{i}(x,t) & \le  \int_{\Omega} \varphi(x,y) \sum_{i=1}^m d_ia_i (u_i(y,t)-u_i(x,t)) \ dy + K+L\sum_{i=1}^m u_i(x,t).
\end{flalign}
Integrating over $\Omega$ and applying the symmetry of $\varphi$ gives
\begin{flalign}\label{L1bound} 
 \frac{\partial}{\partial t}\sum_{i=1}^m a_i \int_\Omega u_{i}(x,t)dx & \le L\alpha \sum_{i=1}^m a_i \int_\Omega u_{i}(x,t)dx+K|\Omega|,
\end{flalign}
where $\alpha= \frac{1}{\max_{i=1,\dots,m}a_i}$ if $L\le0$ and $\alpha= \frac{1}{\min_{i=1,\dots,m}a_i}$ if $L>0$. As a result, 
\begin{flalign}\label{L1bound2} 
 \sum_{i=1}^m a_i \int_\Omega u_{i}(x,t)dx & \le \sum_{i=1}^m  a_i \int_\Omega v_{i}(x)dx+\gamma(K,L,\alpha,\Omega,t),
\end{flalign}
where
\begin{equation*}
\gamma(K,L,\alpha,\Omega,t)=\begin{cases}
K|\Omega|t,&L=0,\\ 
\frac{K|\Omega|\left(e^{L\alpha t}-1\right)}{L\alpha},&L\ne0.
\end{cases}
\end{equation*}
So, if we set $d_{max}=\max_{i\in\{1,...,m\}}{d_i}$ and $d_{min}=\min_{i\in\{1,...,m\}}{d_i}$ then (\ref{L1bound2}), \hyperlink{item:(DIFF)}{(DIFF)} and (\ref{ThisOne}) imply
\begin{flalign}\label{ThisTwo} 
 \frac{\partial}{\partial t}U(x,t)+d_{min} \varepsilon U(x,t)& \le d_{max}\mu\left(\sum_{i=1}^m  a_i \int_\Omega v_{i}(x)dx+\gamma(K,L,\alpha,\Omega,t)\right)  + K+L\alpha U(x,t).
\end{flalign}
where $U(x,t)=\sum_{i=1}^m a_i u_{i}(x,t)$. As a result, a simple comparison principle implies $\|U(\cdot,t)\|_{\infty,\Omega}$ is bounded on $[0,T_{max})$, contradicting the assumption $T_{max}<\infty$. Therefore, $T_{max}=\infty$. 

 Now assume either $\varepsilon>0$ in \hyperlink{item:(DIFF)}{(DIFF)} and $K=L=0$, or $L<0$ in \hyperlink{item:(QBAL)}{(QBAL)}. If $\varepsilon>0$ and $L=K=0$ then (\ref{ThisTwo}) implies
$$\frac{\partial}{\partial t}U(x,t)+d_{min} \varepsilon U(x,t) \le d_{max}\mu\sum_{i=1}^m  a_i \int_\Omega v_{i}(x)dx,$$
implying $\|U\|_{\infty,\Omega\times\mathbb{R}_+}$ is bounded. Similarly, if $L<0$ then (\ref{ThisTwo}) implies
$$\frac{\partial}{\partial t}U(x,t)+|L|\alpha U(x,t) \le d_{max}\mu\left(\sum_{i=1}^m  a_i \int_\Omega v_{i}(x)dx+\frac{K|\Omega|}{|L|\alpha}\right)  + K,$$
again implying $\|U\|_{\infty,\Omega\times\mathbb{R}_+}$ is bounded. In either case, the result follows since $a_i>0$ for all $i$.$\hfill\square$

\subsection{Proof of Theorem \ref{globalmixed}}
 
Let $X=C(\overline{\Omega})$ and note that $\Gamma_i$ is a bounded linear operator on $X$ for each $i=1,...,m_1$. As a result, $S_i(t)=e^{\Gamma_i t}$ is a strongly continuous semigroup on $X$. Also, if we define $A_j(z)=-d_j\Delta z$ for all $j\in\{m_1+1,...,m\}$ and
$$z\in D(A_j) = \big\{w\in W_p^2(\Omega)\,:\,p>n,\,A_j(w)\in C(\overline\Omega),\,\frac{\partial}{\partial\eta}w=0\,\text{on }\partial\Omega \big\}$$
then $A_j:D(A_j)\to X$ is an unbounded linear operator that generates an analytic semigroup on $X$ \cite{stewart1980}, and hence a strongly continuous semigroup $T_j(t)$ on $X$. Consequently, results in \cite{pazy2012semigroups} guarantee that (\ref{eq:2.19}) has a unique maximal solution $u$ on a maximal interval $[0,T_{max})$ where $T_{max}$ is a positive extended real number. If $T_{max}=\infty$ then we are done. Suppose $0<T_{max}<\infty$. Then from \cite{pazy2012semigroups} 
$$\limsup_{t\to T_{max}^-}\sum_{i=1}^{m}\|u_i(\cdot,t)\|_{\infty,\Omega}=\infty.$$
It is straightforward to prove that \hyperlink{item:(QP)}{(QP)} implies the maximal solution $u$ is component-wise nonnegative.

From the assumptions in \hyperlink{item:(INT-SUM2)}{(INT-SUM2)} we know $a_{i,j}>0$ for all $i\in\{m_1,\dots,m\}$ and $j\in\{1,\dots,m\}$ with $i\ge j$. Applying this to the partial differential equations (\ref{eq:2.19}) with $i=m$ and integrating over $\Omega$ gives
$$
\frac{\partial}{\partial t}\int_\Omega \sum_{i=1}^{m} a_{m,i}u_{i} (x,t) dx \le\int_\Omega L  \big(\alpha\sum_{i=1}^{m} a_{m,i}u_{i} (x,t)  + 1 \big)dx,
$$
where $\alpha=\left(\min_{j=1,...,m}a_{m,j}\right)^{-1}$. As a result, there exists $G_{T_{max}}>0$ such that
\begin{equation}\label{L1boundu}
\|u_i(\cdot,t)\|_{1,\Omega}\le G_{T_{max}}\quad\text{for all }t\in [0,T_{max})\quad\text{and }i=1,...,m.
\end{equation}
Since $a_{m_1,i}>0$ for all $i\le m_1$ and we see that
\begin{flalign}
  \frac{\partial}{\partial t} \sum_{i=1}^{m_1} a_{m_1,i}u_{i} (x,t) & =    \sum_{i=1}^{m_1} a_{m_1,i}\Gamma_{i}u_{i} (x,t)+ \sum_{i=1}^{m_1}a_{m_1,i} f_{i} (u(x,t)) \nonumber\\
 & \le d_{max}\int_\Omega \varphi(x,y)\sum_{i=1}^{m_1}a_{m_1,i}u_i(y,t)dy+L\left(\sum_{i=1}^{m} u_{i} (x,t) +1\right) \nonumber\\
 & \le K_{T_{max}}+L\alpha\sum_{i=1}^{m_1} a_{m_1,i}u_{i} (x,t) +L \sum_{j=m_1+1}^{m} u_{j} (x,t) \label{eq:4.4}
 \end{flalign} 
for some $K_{T_{max}},\beta>0$ dependent on $G_{T_{max}}$, $\varphi$ and $d_{max}$. As a result, if $0\le\tau<T<T_{max}$ and $1<p'<\infty$ we can multiply both sides above by 
$\left(\sum_{i=1}^{m_1} a_{m_1,i}u_{i} (x,t)\right)^{p'-1}$
to show
\begin{flalign}
\sum_{i=1}^{m_1} \|u_{i} \| _{p',Q_{\tau,T}} & \le M_{p'} (T-\tau)\left(\sum_{i=1}^m\|u_i(\cdot,\tau)\|_{p',\Omega} + 1 + \sum_{j=m_1+1}^{m} \|u_j\|_{p',Q_{\tau,T}}\right) \label{eq:4.5}
\end{flalign}
for some nondecreasing $M_{p'} \in C(\mathbb{R}_+,\mathbb{R}_+)$ with $M_{p'}(0)=0$.

We will use (\ref{eq:4.5}) to bootstrap the $L^1(\Omega)$ bounds for $u$ to $L^{p'}(\Omega\times(0,T_{max}))$ bounds for all $1<p'<\infty$ by applying duality arguments.  To this end, let $0\le \tau<T< T_{max}$, with $\tau$ further specified below, $j\in\{m_1+1,\dots,m\}$, $1<p<<\frac{n+2}{2}$ (with the proximity to $1$ to be specified below) and $\theta \in L^p(Q_{\tau,T})$ such that $\theta\ge0$ and $\|\theta\|_{p,Q_{\tau,T}}=1$. Let $\phi_j$  be the unique nonnegative solution in $W_p^{(2,1)}(Q_{\tau,T})$ solving
\begin{equation} \label{eq:4.6}
\left\{\begin{aligned}
    \frac{\partial}{\partial t} \phi_{j} & = - d_{j} \Delta \phi_{j} - \theta, &&Q_{\tau,T},\\
    \frac{\partial}{\partial \eta} \phi_{j} & = 0, &&\partial\Omega \times (\tau,T),\\
    \phi_{j} & = 0,&& \Omega \times \{T\}.
\end{aligned}\right.
\end{equation}
At first glance, this might appear to be a backward heat equation, but the substitutions $w_{j}(x,t)=\phi_{j}(x,T+\tau-t)$ and $\tilde\theta(x,t)=\theta(x,T+\tau-t)$ lead to the standard initial boundary value problem given by
\begin{equation} \label{eq:4.7}
\left\{\begin{aligned}
    \frac{\partial}{\partial t} w_{j} & = d_{j} \Delta w_{j} + \tilde\theta, &&Q_{\tau,T},\\
    \frac{\partial}{\partial \eta} w_{j} & = 0,&& \partial\Omega \times (\tau,T),\\
    w_{j} & = 0,&& \Omega \times \{\tau\}.
\end{aligned}\right.
\end{equation}
From Lemma \ref{lem:3.3} there exists $C_{p,d_j,T-\tau}>0$ so that $$\|\phi_j(\cdot,\tau)\|_{p,\Omega},\,\|\phi_j\|_{p,Q_{\tau,T}}^{(2,1)}\le C_{p,d_j,T-\tau},$$ 
along with additional embedding results. We recall that $C_{p,d_j,T-\tau}$ can be chosen to be nondecreasing in $T-\tau$, and since $T_{max}$ is assumed to be finite, we can choose $C_p=\max_{\{j=m_1+1,...,m\}}C_{p,d_j,T_{max}}$, and without loss of generality assume $\|\Delta \phi_j\|_{p,Q_{\tau,T}}\le C_{p}$. Finally, let $p'=\frac{p}{p-1}$ and recall the function $M_{p'}$ from (\ref{eq:4.6}). Note that \hyperlink{item:(INT-SUM2)}{(INT-SUM2)} implies $a_{i,j}>0$ for all $i\ge m_1$ and $j\le i$. Let $k\in\{m_1+1,\dots,m\}$ and set $j=k$ in (\ref{eq:4.6}). In addition, let $\mathbb{I}_k=\{j\in\mathbb{N}\, |\,j\ge m_1+1\,\text{and } j<k\}$. Then
\begin{equation*}
\begin{aligned}
\int_\tau^T\int_\Omega \sum_{i=1}^k a_{k,i}u_i\theta dxdt&=\int_\tau^T\int_\Omega \sum_{i=1}^k a_{k,i}u_i(-d_k\Delta\phi_k-\partial_t\phi_k)dxdt\\
&=-\int_\tau^T\int_\Omega \sum_{i=1}^k a_{k,i}u_i d_k\Delta\phi_kdxdt+\int_\Omega \sum_{i=1}^k a_{k,i}u_i(x,\tau)\phi_k(x,\tau)dx+\int_\tau^T\int_\Omega \phi_k\sum_{i=1}^k a_{k,i}\partial_tu_idxdt\\
&\le-\int_\tau^T\int_\Omega \sum_{i=1}^{m_1} a_{k,i}u_i d_k\Delta\phi_kdxdt+\int_\Omega \sum_{i=1}^k a_{k,i}u_i(x,\tau)\phi_k(x,\tau)dx\\
&+\int_\tau^T\int_\Omega \phi_k(x,t)\sum_{i=1}^{m_1} a_{k,i}\int_\Omega d_i\varphi(x,y)(u_i(y,t)-u_i(x,t))dydxdt\\
&+\int_\tau^T\int_\Omega \sum_{i\in \mathbb{I}_k} a_{k,i} (d_i-d_k)u_i\Delta\phi_k dxdt+\int_\tau^T\int_\Omega \phi_k L\left(\sum_{i=1}^{m} u_i+1\right)dxdt\\
&=I_1+I_2+I_3+I_4+I_5.
\end{aligned}
\end{equation*}
 If $A_{max}=\max_{i,j}a_{i,j}$ then (\ref{eq:4.5}) implies
\begin{equation}\label{I1-bound}
\begin{aligned}
|I_1|&\le A_{max}d_{max}\int_\tau^T\int_\Omega \sum_{i=1}^{m_1} u_i |\Delta\phi_k|dxdt\le A_{max}d_{max}C_p\sum_{i=1}^{m_1}\|u_i\|_{p',Q_{\tau,T}}\\
&\le A_{max}d_{max}C_pM_{p'} (T-\tau)\left(\sum_{i=1}^m\|u_i(\cdot,\tau)\|_{p',\Omega} + 1 + \sum_{j=m_1+1}^{m} \|u_j\|_{p',Q_{\tau,T}}\right).
\end{aligned}
\end{equation}
From Lemma \ref{lem:3.3}
\begin{equation}\label{I2-bound}
\begin{aligned}
I_2=\int_\Omega \sum_{i=1}^k a_{k,i}u_i(x,\tau)\phi_k(x,\tau)dx\le A_{max}C_p\sum_{i=1}^m\|u_i(\cdot,\tau)\|_{p',\Omega}.
\end{aligned}
\end{equation}
From Lemma \ref{lem:3.3} and (\ref{L1boundu})
\begin{equation}\label{I3-bound}
\begin{aligned}
I_3&\le A_{max}d_{max}\int_\tau^T\int_\Omega \phi_k(x,t)\left(\int_\Omega\sum_{i=1}^{m_1} \varphi(x,y)u_i(y,t)dy\right)dxdt\\
&\le A_{max}d_{max}\|\varphi\|_{\infty,\Omega\times\Omega}m_1G_{T_{max}}C_p\left(|\Omega|T_{max}\right)^{1/p}.
\end{aligned}
\end{equation}
\begin{equation}\label{I4-bound}
\begin{aligned}
I_4&\le A_{max}d_{max}\int_\tau^T\int_\Omega \sum_{i\in\mathbb{I}_k}u_i|\Delta \phi_k|dxdt\\
&\le A_{max}d_{max}C_p\sum_{i\in\mathbb{I}_k}\|u_i\|_{p',Q_{\tau,T}}.
\end{aligned}
\end{equation}
Now recall $1<p<\frac{n+2}{2}$. If $q=\frac{(n+2)p}{n+2-2p}$ then Lemma \ref{lem:3.3} implies
$$\|\phi_k\|_{1,Q_{\tau,T}}\le C_p.$$ 
So,
\begin{equation}\label{I5-bound1}
\begin{aligned}
I_5&=\int_\tau^T\int_\Omega \phi_k L\left(\sum_{i=1}^{m} u_i+1\right)dxdt\\
&\le LC_p\left(|\Omega|(T_{max}-\tau)|\right)^{1/p}+L\int_\tau^T\int_\Omega \phi_k \sum_{i=1}^{m} u_idxdt.
\end{aligned}
\end{equation}
Note that if $i\in\{1,...,m\}$ then
\begin{equation*}
\begin{aligned}
\int_\tau^T\int_\Omega \phi_k u_idxdt&\le \|\phi_k\|_{q,Q_{\tau,T}}\left(\int_\tau^T\int_\Omega u_i^{\frac{p(n+2)}{(p-1)(n+2)+2p}}dxdt\right)^{\frac{p-1}{p}+\frac{2}{n+2}}\\
&\le C_p\left(\int_\tau^T\int_\Omega u_i^{\frac{p(n+2)}{(p-1)(n+2)+2p}}dxdt\right)^{\frac{p-1}{p}+\frac{2}{n+2}}
\end{aligned}
\end{equation*}
It is straightforward to show that $\frac{p(n+2)}{(p-1)(n+2)+2p}<\frac{n+2}{2}$. Also, if $p$ is sufficiently close to $1$ and we define
$$k=\frac{2p^2}{(p-1)(n+2)+2p}$$
then it can be shown that $\frac{1}{k}>1$,
$$\left(\frac{p(n+2)}{(p-1)(n+2)+2p}-k\right)\cdot\frac{1}{1-k}=\frac{p}{p-1}=p'$$
and
$$\left(\frac{p-1}{p}+\frac{2}{n+2}\right)(1-k)<\frac{p-1}{p}=\frac{1}{p'}.$$
Therefore, if we define $\delta_p=p'\left(\frac{p-1}{p}+\frac{2}{n+2}\right)(1-k)$ then $0<\delta_p<1$ and
\begin{equation*}
\begin{aligned}
\left(\int_\tau^T\int_\Omega u_i^{\frac{p(n+2)}{(p-1)(n+2)+2p}}dxdt\right)^{\frac{p-1}{p}+\frac{2}{n+2}}&=\left(\int_\tau^T\int_\Omega u_i^{\frac{p(n+2)}{(p-1)(n+2)+2p}-k}u_i^kdxdt\right)^{\frac{p-1}{p}+\frac{2}{n+2}}\\
&\le G_{T_{max}}^{\frac{2p}{n+2}}\|u_i\|_{p',Q_{\tau,T}}^{\delta_p}
\end{aligned}
\end{equation*}
from the inequalities above and the fact that the conjugate of $\frac{1}{k}$ is $\frac{1}{1-k}$. Consequently, we see from (\ref{I5-bound1}) that
\begin{equation}\label{I5-bound}
\begin{aligned}
I_5&\le LC_p\left(|\Omega|(T_{max}-\tau)|\right)^{1/p}+LG_{T_{max}}^{\frac{2p}{n+2}}\sum_{i=1}^m \|u_i\|_{p',Q_{\tau,T}}^{\delta_p}.
\end{aligned}
\end{equation}
Combining these estimates gives
\begin{equation}\label{ineq-k}
\begin{aligned}
\int_\tau^T\int_\Omega \sum_{i=1}^k u_i\theta dxdt&\le I_1+I_2+I_3+I_4+I_5\\
&\le K_{p,T_{max}}\left(1+\sum_{i=1}^m\|u_i(\cdot,\tau)\|_{p',\Omega}+\sum_{i\in\mathbb{I}_k}\|u_i\|_{p',Q_{\tau,T}}\right.\\
&\left.+M_{p'}(T_{max}-\tau)\sum_{j=m_1+1}^m\|u_j\|_{p',Q_{\tau,T}}+\sum_{i=1}^m\|u_i\|_{p',Q_{\tau,T}}^{\delta_p}\right)
\end{aligned}
\end{equation}
for some constant $K_{p,T_{max}}>0$ which also depends on $d_{max}$, $|\Omega|$, $A_{max}$, $A_{min}$ and $\varphi$ where
$$A_{min}=\min_{r,i\in\{1,...,m\},\,r\ge m_1,\,r\ge i}a_{r,i}>0.$$
Note that when $k=j=m_1+1$ the set $\mathbb{I}_k=\emptyset$, and duality implies
\begin{equation}\label{ineq-k1}
\begin{aligned}
\|u_i\|_{p',Q_{\tau,T}}&\le K_{p,T_{max}}\left(1+\sum_{i=1}^m\|u_i(\cdot,\tau)\|_{p',\Omega}+M_{p'}(T_{max}-\tau)\sum_{j=m_1+1}^m\|u_j\|_{p',Q_{\tau,T}}+\sum_{i=1}^m\|u_i\|_{p',Q_{\tau,T}}^{\delta_p}\right)
\end{aligned}
\end{equation}
for all $i\in\{1,...,m_1+1\}$. It is clear that if $m_1+1<m$ and we set $k=j=m_1+2$ then $\mathbb{I}_k=\{m_1+1\}$, and (\ref{ineq-k}) and (\ref{ineq-k1}) can be combined to prove there exists a value $K_{p,T_{max}}>0$ which is potentially larger than the one in (\ref{ineq-k}) and (\ref{ineq-k1}) so that 
\begin{equation}\label{ineq-k2}
\begin{aligned}
\|u_i\|_{p',Q_{\tau,T}}&\le K_{p,T_{max}}\left(1+\sum_{i=1}^m\|u_i(\cdot,\tau)\|_{p',\Omega}+M_{p'}(T_{max}-\tau)\sum_{j=m_1+1}^m\|u_j\|_{p',Q_{\tau,T}}+\sum_{i=1}^m\|u_i\|_{p',Q_{\tau,T}}^{\delta_p}\right)
\end{aligned}
\end{equation}
for all $i\in\{1,...,m_1+2\}$. Proceeding inductively, we can prove there exists $K_{p,T_{max}}>0$ sufficiently large so that 
\begin{equation*}
\begin{aligned}
\|u_i\|_{p',Q_{\tau,T}}&\le K_{p,T_{max}}\left(1+\sum_{i=1}^m\|u_i(\cdot,\tau)\|_{p',\Omega}+M_{p'}(T_{max}-\tau)\sum_{j=m_1+1}^m\|u_j\|_{p',Q_{\tau,T}}+\sum_{i=1}^m\|u_i\|_{p',Q_{\tau,T}}^{\delta_p}\right)
\end{aligned}
\end{equation*}
for all $i\in\{1,...,m\}$. Consequently, 
\begin{equation*}
\begin{aligned}
\sum_{i=1}^m\|u_i\|_{p',Q_{\tau,T}}&\le m K_{p,T_{max}}\left(1+\sum_{i=1}^m\|u_i(\cdot,\tau)\|_{p',\Omega}+M_{p'}(T_{max}-\tau)\sum_{j=m_1+1}^m\|u_j\|_{p',Q_{\tau,T}}+\sum_{i=1}^m\|u_i\|_{p',Q_{\tau,T}}^{\delta_p}\right).
\end{aligned}
\end{equation*}
So, if $\tau<T<T_{max}$ is chosen so that $m K_{p,T_{max}}M_{p'}(T_{max}-\tau)\le \frac{1}{2}$ then
\begin{equation}\label{ineq-km}
\begin{aligned}
\sum_{i=1}^m\|u_i\|_{p',Q_{\tau,T}}&\le 2mK_{p,T_{max}}\left(1+\sum_{i=1}^m\|u_i(\cdot,\tau)\|_{p',\Omega}+\sum_{i=1}^m\|u_i\|_{p',Q_{\tau,T}}^{\delta_p}\right).
\end{aligned}
\end{equation}
Then, since $0<\delta_p<1$ we can conclude that there exists a potentially larger value $K_{p,T_{max}}>0$ so that
$$\sum_{i=1}^m\|u_i\|_{p',Q_{\tau,T}}\le 2mK_{p,T_{max}}\left(1+\sum_{i=1}^m\|u_i(\cdot,\tau)\|_{p',\Omega}\right),$$
and as a result, 
$$\sum_{i=1}^m\|u_i\|_{p',Q_{\tau,T_{max}}}\le 2mK_{p,T_{max}}\left(1+\sum_{i=1}^m\|u_i(\cdot,\tau)\|_{p',\Omega}\right).$$
Therefore, since $\tau<T_{max}$, we can conclude that for every $1\le p'<\infty$ that $\|u_i\|_{p',Q_{0,T_{max}}}$ is bounded for each $i\in\{1,...,m\}$.

\medskip Now we apply \hyperlink{item:(POLY)}{(POLY)} to each PDE for $u_j$ for $j=m_1+1,...,m$. Recall that there exists $L>0$ and $r\in\mathbb{N}$ such that
$$f_j(z)\le L\left(\sum_{i=1}^{m}z_i+1\right)^r\quad\text{for all }z\in\mathbb{R}_+^{m}.$$
Let 
$$G(z)=L\left(\sum_{i=1}^{m}z_i+1\right)^r\quad\text{for all }z\in\mathbb{R}_+^{m}.$$
Then the estimates above imply $\|G(u)\|_{p,Q_{\tau,T_{max}}}$ is bounded for each $1<p<\infty$. Now, for each $j=m_1+1,...,m$ let $w_j$ be the unique nonnegative solution to
\begin{equation} \label{eq:4.8c}
\left\{\begin{aligned}
    \frac{\partial}{\partial t} w_{j} & = d_{j} \Delta w_{j} + G(u), &Q_{\tau,T_{max}},\\
    \frac{\partial}{\partial \eta} w_{j} & = 0,& \partial\Omega \times (0,T_{max}),\\
    w_{j} & = v_j,& \Omega \times \{0\}.
\end{aligned}\right.
\end{equation}
The comparison principle for parabolic initial boundary value problems implies $u_j\le w_j$ for $j\in\{m_1+1,...,m\}$, and from the $L^p(Q_{\tau,T_{max}})$ bound on $G(u)$ for $p>\frac{n+2}{2}$,  $\|w_j\|_{\infty,Q_{\tau,T_{max}}}$ is bounded. Now we can return to the PDEs for the $u_i$ with $i\in\{1,...,m_1\}$ and argue as in the proof Theorem \ref{global} to conclude that $\|u_i\|_{\infty,Q_{\tau,T_{max}}}$ is bounded for all $i=1,...,m_1$. Consequently, 
$$\limsup_{t\to T_m^-}\sum_{i=1}^{m}\|u_i(t,\cdot)\|_{\infty,\Omega}<\infty,$$
contradicting the assumption that $T_{max}<\infty$. Therefore, $T_{max}=\infty$. \hfill$\square$

\section{Diffusive Limit - Proofs of Theorem \ref{DiffLimit} and \ref{DiffLimit2}}

We observe that the sup norm bounds we obtained in our proofs of Theorems \ref{global} and \ref{globalmixed} on $Q_{\tau,T}$ depend on the kernel of the convolution operator. Since our ultimate goal is to obtain a diffusive limit, it is important to obtain bounds that are independent of the kernel. In this chapter we use \hyperlink{item:(QP)}{(QP)} and \hyperlink{item:(INT-SUM)}{(INT-SUM)} to develop bounds for solutions of the system (\ref{eq:2.11}) in $L^p(\Omega)$ for $p\in\mathbb{N}$ with $p \ge 2$ that are independent of the kernel function $\varphi$. 

\subsection{$L^p$ Bounds Independent of $\varphi$}
 To construct our $L^p$-energy functional, we write $\mathbb{Z}_{+}^{m}$ for the set of all $m$-tuples of non negative integers. Addition and
scalar multiplication by non negative integers of elements in $\mathbb{Z}_{+}^{m}$
is understood in the usual manner. If $\beta=(\beta_{1},...,\beta_{m})\in \mathbb{Z}_{+}^{m}$ and $p\in \mathbb N$,
then we define $\beta^{p}=((\beta_{1})^{p},...,(\beta_{m})^{p})$.
Also, if $\alpha=(\alpha_{1},...,\alpha_{m})\in  \mathbb{Z}_{+}^{m}$, then
we define $|\alpha|=\sum_{i=1}^{m}\alpha_{i}$. Finally, if  $u=(u_1,...,u_m)\in\mathbb{R}_+^m$ then $u^{\beta}=\Pi_{i=1}^m u_i^{\beta_i}$, where we interpret $0^{0}$ to be $1$, $\theta^{\beta^2}=\Pi_{i=1}^m \theta_i^{\beta^2_i}$, and $\binom{p}{\beta}=\frac{p!}{\beta_1!\beta_2!\dots\beta_m!}$. Let $u=(u_1,...,u_m)$ be the solution to (\ref{eq:2.11}). For $2\leq p\in \mathbb N$, we build our $L^p$-energy function given by 
\begin{equation}\label{Lp}
	\mathcal{L}_p[u](t) = \int_\Omega \mathcal{H}_p[u](x,t)dx
\end{equation}
where
\begin{equation}\label{Hp}
	\mathcal{H}_p[u](x,t) = \sum_{\beta\in \mathbb Z_+^{m}, |\beta| = p}\begin{pmatrix}
	p\\ \beta\end{pmatrix}\theta^{\beta^2}u(x,t)^{\beta}
\end{equation}
was introduced in Lemma \ref{lem:3.5}, and the positive constants $\theta= (\theta_1,\ldots, \theta_{m})$ are to be chosen. We again remind the reader that this energy functional first appeared in \cite{kouachi2001existence}. For convenience, hereafter we drop the subscript $\beta\in \mathbb Z_+^{m}$ in the sum as it should be clear.

Recall from Lemma \ref{lem:3.5}, if $2\leq p\in \mathbb N$ and $\theta \in (0,\infty)^m$ then there exist $0<\varepsilon_1<\varepsilon_2$ dependent on the choice of $\theta$ so that 
\begin{equation}\label{equivnorm}
\varepsilon_1\|u(\cdot,t)\|_{p,\Omega}\le\left(\mathcal{L}_p[u](t)\right)^{\frac{1}{p}}\le \varepsilon_2\|u(\cdot,t)\|_{p,\Omega}.
\end{equation}
We use this fact to get the $L^p$ bounds below.

\begin{lemma} \label{Lpbdd}
Suppose \hyperlink{item:(INIT)}{(INIT)}, \hyperlink{item:(DIFF)}{(DIFF)}, \hyperlink{item:(F)}{(F)}, \hyperlink{item:(QP)}{(QP)} and \hyperlink{item:(INT-SUM)}{(INT-SUM)}, and $u$ is the unique component-wise global solution to (\ref{eq:2.11}). If $T>0$ and $p \in \mathbb{N}$ such that $p \ge 2$ then there exists $L_{p}\in C(\mathbb{R}_+,\mathbb{R}_+)$ independent of the choice of $\varphi$ in \hyperlink{item:(DIFF)}{(DIFF)} such that
$$\|u(\cdot,t)\|_{p,\Omega}\le L_{p}(t)\,\text{for all }t\ge0.$$
\end{lemma}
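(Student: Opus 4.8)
The plan is to differentiate the energy functional $\mathcal{L}_p[u](t)$ in (\ref{Lp}) along solutions of (\ref{eq:2.11}) and to show that the nonlocal diffusion contributes nonpositively, so that the resulting differential inequality has coefficients independent of $\varphi$. Using Lemma \ref{lem:3.5} together with the identity $\frac{\partial \mathcal{H}_p}{\partial u_j}(u)=\sum_{|\beta|=p-1}\binom{p}{\beta}\theta^{\beta^2}\theta_j^{2\beta_j+1}u^{\beta}$ (the chain-rule reading of Lemma \ref{lem:3.5}), and substituting $\partial_t u_j=d_j\int_\Omega\varphi(x,y)(u_j(y,t)-u_j(x,t))\,dy+f_j(t,u)$, I would write
$$\frac{d}{dt}\mathcal{L}_p[u](t)=\mathcal{D}(t)+\mathcal{R}(t),$$
where $\mathcal{D}(t)=\sum_j d_j\int_\Omega \frac{\partial \mathcal{H}_p}{\partial u_j}\Big(\int_\Omega \varphi(x,y)(u_j(y,t)-u_j(x,t))\,dy\Big)dx$ is the diffusion part and $\mathcal{R}(t)=\int_\Omega\sum_j \frac{\partial \mathcal{H}_p}{\partial u_j}f_j(t,u)\,dx$ is the reaction part.

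For the diffusion term I would apply the nonlocal integration-by-parts identity (\ref{eq:3.1}) with $v=\frac{\partial \mathcal{H}_p}{\partial u_j}(u(\cdot))$ and $w=u_j$, and then expand the difference $\frac{\partial \mathcal{H}_p}{\partial u_j}(u(y))-\frac{\partial \mathcal{H}_p}{\partial u_j}(u(x))$ by the fundamental theorem of calculus along the segment $s\mapsto \xi_s:=u(x)+s(u(y)-u(x))$, $s\in[0,1]$. Because $u$ is componentwise nonnegative, each $\xi_s\in\mathbb{R}_+^m$, and writing $\zeta=u(y)-u(x)$ one gets
$$\mathcal{D}(t)=-\tfrac{1}{2}\int_\Omega\int_\Omega\varphi(x,y)\int_0^1\sum_{j,k}d_j\,\frac{\partial^2\mathcal{H}_p}{\partial u_j\partial u_k}(\xi_s)\,\zeta_j\zeta_k\,ds\,dy\,dx.$$
The Hessian $\mathrm{Hess}\,\mathcal{H}_p(\xi)$ appearing here is exactly the one arising in the local-diffusion analysis, so the inner quadratic form is $\zeta^{\top}\mathrm{diag}(d_j)\,\mathrm{Hess}\,\mathcal{H}_p(\xi)\,\zeta$. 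I would therefore invoke the choice of positive constants $\theta$ of Kouachi \cite{kouachi2001existence} and Abdelmalek--Kouachi \cite{abdelmalek2007proof}, for which the symmetric part of $\mathrm{diag}(d_j)\,\mathrm{Hess}\,\mathcal{H}_p(\xi)$ is positive semidefinite for all $\xi\in\mathbb{R}_+^m$. Since $\varphi\ge0$, this yields $\mathcal{D}(t)\le0$, and, crucially, this sign is independent of $\varphi$.

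For the reaction term I would use \hyperlink{item:(INT-SUM)}{(INT-SUM)}: for each fixed $\beta$ with $|\beta|=p-1$, the weight vector $(\theta_j^{2\beta_j+1})_j$ is to be decomposed as a \emph{nonnegative} combination $\sum_k\lambda_k^{(\beta)}(a_{k,1},\dots,a_{k,m})$ of the rows of the lower-triangular matrix $A$, so that $\sum_j\theta_j^{2\beta_j+1}f_j=\sum_k\lambda_k^{(\beta)}\sum_{j\le k}a_{k,j}f_j\le L\sum_k\lambda_k^{(\beta)}\big(\sum_{i\le k}u_i+1\big)$. This makes the integrand of $\mathcal{R}(t)$ a degree-$p$ polynomial in $u$ with coefficients depending only on $\theta,p,m,A,L$; its top-degree part is controlled by $\mathcal{H}_p[u]$ and the lower-degree monomials are absorbed by Young's inequality, giving $\mathcal{R}(t)\le\tilde{L}(t)\big(\mathcal{L}_p[u](t)+1\big)$ with $\tilde{L}\in C(\mathbb{R}_+,\mathbb{R}_+)$ independent of $\varphi$. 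Combining the two estimates yields $\frac{d}{dt}\mathcal{L}_p[u](t)\le\tilde{L}(t)\big(\mathcal{L}_p[u](t)+1\big)$; Gronwall's inequality bounds $\mathcal{L}_p[u](t)$ by a continuous function of $t$ determined by $\tilde{L}$ and $\mathcal{L}_p[u](0)$ (finite by \hyperlink{item:(INIT)}{(INIT)}), and the norm equivalence (\ref{equivnorm}) converts this into $\|u(\cdot,t)\|_{p,\Omega}\le L_p(t)$ for integer $p\ge2$ (non-integer $p$ then follows from $\|u\|_{p,\Omega}\le|\Omega|^{1/p-1/p'}\|u\|_{p',\Omega}$ with $p'$ the next integer).

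The main obstacle is the simultaneous choice of the parameters $\theta=(\theta_1,\dots,\theta_m)$. They must be spread apart enough that the quadratic form $\zeta^{\top}\mathrm{diag}(d_j)\,\mathrm{Hess}\,\mathcal{H}_p(\xi)\,\zeta$ is positive semidefinite on $\mathbb{R}_+^m$ for the given diffusivities $d_j$, and \emph{at the same time} the weight vectors $(\theta_j^{2\beta_j+1})_j$ must decompose with nonnegative coefficients over the rows of $A$ for \emph{every} multi-index $\beta$ with $|\beta|=p-1$ (the exponents $2\beta_j+1$ ranging over $1,\dots,2p-1$ make this delicate). Reconciling these two requirements is the heart of the Kouachi--Abdelmalek construction and is where the real work lies; the remaining steps are a differentiation, a sign computation via Lemma \ref{lem:3.1}, and Gronwall's inequality.
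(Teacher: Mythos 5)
Your proposal follows essentially the same route as the paper's proof: differentiate the $L^p$ energy functional $\mathcal{L}_p[u]$, use the symmetrization identity of Lemma \ref{lem:3.1} together with a mean-value/segment expansion to express the nonlocal diffusion contribution as $-\tfrac12$ times an integral of a quadratic form that is nonnegative definite for a suitable choice of $\theta$ (the paper's matrix $A(U,\theta,p,x,y)$, a congruence of $\varphi\,\mathscr{M}$), bound the reaction integral by $C_p(t)(1+\mathcal{L}_p[u](t))$ via \hyperlink{item:(INT-SUM)}{(INT-SUM)}, and conclude with Gronwall and the norm equivalence (\ref{equivnorm}). The one step you correctly flag as the heart of the matter --- the simultaneous choice of $\theta$ making the quadratic form nonnegative definite \emph{and} the weight vectors $(\theta_j^{2\beta_j+1})_j$ decomposable with nonnegative coefficients over the rows of the matrix in \hyperlink{item:(INT-SUM)}{(INT-SUM)} --- is exactly the step the paper does not carry out either, importing it instead as conditions (\ref{theta1})--(\ref{theta2}) from \cite{morgan2023global}.
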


\textbf{Proof of Lemma \ref{Lpbdd}:} Let $2\leq p\in \mathbb N$. From results in \cite{morgan2023global}, it is possible to choose $\theta\in(0,\infty)^m$ so that 
\begin{enumerate}[label=($\theta$\theenumi),ref=$\theta$\theenumi]
	\item\label{theta1} The matrix $m\times m$ matrix $\mathscr{M}=\left(\mathscr{M}_{i,j}\right)$ is positive
	definite, where 
	\begin{equation}\label{M}
	\mathscr{M}_{i,j}=\begin{cases}
	\begin{array}{cc}
	d_{i}\theta_{i}^{2}, & \text{ if \ensuremath{i=j}}\\
	\frac{d_{i}+d_{j}}{2}, & \text{ if \ensuremath{i\ne j}}
	\end{array}\end{cases}
	\end{equation}
	
	\item\label{theta2} There exists $C_p>0$ such that
	$$\int_{\Omega} \sum_{\vert \beta \vert = p-1} \binom{p}{\beta} \theta^{\beta^2} u^{\beta} \sum_{i=1}^m \theta_i^{2\beta_i+1}  f_i(u) \ dx\le C_p\left(1+\mathcal{L}_p[u](t)\right).$$
\end{enumerate}
In addition, results in \cite{morgan2023global} guarantee
\begin{equation*}
(\mathcal{L}_p[u])'(t)  = \int_{\Omega} \sum_{\vert \beta \vert = p-1} \binom{p}{\beta} \theta^{\beta^2} u^{\beta} \sum_{i=1}^m \theta_i^{2\beta_i+1} \frac{\partial}{\partial t} u_i \ dx.
\end{equation*}
For convenience, we write $\varphi_i=d_i\varphi$. As a result,
\begin{flalign*}
(\mathcal{L}_p[u])'(t)& = \int_{\Omega} \sum_{\vert \beta \vert = p-1} \binom{p}{\beta} \theta^{\beta^2} u^{\beta} \sum_{i=1}^m \theta_i^{2\beta_i+1} \big( \int_{\Omega}\varphi_i(x,y) (u_i(y,t)-u_i(x,t)) \ dy +f_i(u) \big) \ dx\\
& = \int_{\Omega} \sum_{\vert \beta \vert = p-1} \binom{p}{\beta} \theta^{\beta^2} u^{\beta} \sum_{i=1}^m \theta_i^{2\beta_i+1} \big( \int_{\Omega}\varphi_i(x,y) (u_i(y,t)-u_i(x,t)) \big) \ dy \ dx\\
& + \int_{\Omega} \sum_{\vert \beta \vert = p-1} \binom{p}{\beta} \theta^{\beta^2} u^{\beta} \sum_{i=1}^m \theta_i^{2\beta_i+1}  f_i(t,u) \ dx \le I+C_p\left(1+\mathcal{L}_p[u](t)\right) \end{flalign*}
where
\begin{flalign*}
I & = \int_{\Omega} \sum_{\vert \beta \vert = p-1} \binom{p}{\beta} \theta^{\beta^2} u^{\beta} \sum_{i=1}^m \theta_i^{2\beta_i+1} \big( \int_{\Omega}\varphi_i(x,y) (u_i(y,t)-u_i(x,t))  \ dy\big) \ dx.
\end{flalign*}
We note that, for any $\alpha>0,$ using a similar estimate as in (\ref{eq:3.1}), we get,
\begin{flalign*}
& \int_{\Omega \times \Omega} u(x,t)^{\alpha} \varphi_i (x,y) (u(y,t)-u(x,t)) \ d(x,y)= - \frac{1}{2} \int_{\Omega \times \Omega} (u(y,t)^{\alpha} -u(x,t)^{\alpha}) \varphi_i (x,y) (u(y,t) -u(x,t)) \ d(x,y)
\end{flalign*}
where we have used the symmetry of $\varphi(x,y)=\varphi(y,x)$. Applying this technique to $I$ we get,
\begin{flalign*}
I & = -\frac{1}{2} \int_{\Omega \times \Omega} \sum_{\vert \beta \vert = p-1} \binom{p}{\beta} \theta^{\beta^2} (u(y,t)^{\beta} -u(x,t)^{\beta}) \sum_{i=1}^m \theta_i^{2\beta_i+1} \varphi_i (x,y) (u_i(y,t)-u_i(x,t)) \ d(x,y)
\end{flalign*}
Now, for a given $\beta$ such that $\vert \beta \vert =p-1$, we define
\begin{flalign*}
\alpha_{\beta} (x,y) = \sum_{i=1}^m \theta_i^{2\beta_i+1} \varphi_i (x,y) (u_i(y,t)-u_i(x,t)),
\end{flalign*}
and for each fixed $(x,y) \in \Omega \times \Omega$ we define $G_{(x,y)}: \mathbb{R}^m \to \mathbb{R}$ via 
\begin{flalign*}
G_{(x,y)}(z) = \sum_{\vert \beta \vert = p-1} \binom{p}{\beta} \theta^{\beta^2} z^{\beta} \alpha_{\beta} (x,y).
\end{flalign*}
Then the mean value theorem implies there exists $c(x,y,t,\beta) \in (0,1)$ so that 
\begin{flalign*}
G_{(x,y)} (u(y,t))-G_{(x,y)} (u(x,t)) = \sum_{j=1}^m D_{z_j} G_{(x,y)} (\tilde{u}(x,y,t,\beta))(u_j(y,t)-u_j(x,t)),
\end{flalign*}
where $D_{z_j} G_{(x,y)}$ is the partial of $G$ with respect to $z_j$ and 
\begin{flalign*}
\tilde{u}(x,y,t,\beta) = u(x,t)+c(x,y,t,\beta) (u(y,t)-u(x,t)).
\end{flalign*}

It is important to note that when $p=2$, the expression $D_{z_j}G_{(x,y)}(\tilde{u}(x,y,t,p))$ is a positive constant that only depends on $\beta$. 
Regardless of the value of the integer $p \ge 2$, the integrand in $I$ causes us to investigate
\begin{flalign*}
\sum_{\vert \beta \vert=p-1} \sum_{i,j=1}^m \binom{p}{\beta} \theta^{\beta^2} \beta_j \tilde{u}(x,y,t,\beta)^{\beta-\Vec{e_j}}(z_j-w_j) \theta_i^{2\beta_i+1} \varphi_i(x,y)(z_i-w_i)
\end{flalign*}
for each $w=(w_k),z=(z_k) \in \mathbb{R}_+^m$ where $\Vec{e_j}$ is column $j$ of the $m \times m$ identity matrix. For notational convenience, we define 
\begin{flalign*}
Z_k = z_k-w_k \ \text{for} \ k \in \{1,2,\dots,m\}.
\end{flalign*}

\noindent\textbf{Claim:} 
\begin{flalign}
\sum_{\vert \beta \vert=p-1} \sum_{i,j=1}^m \binom{p}{\beta} \theta^{\beta^2} \beta_j \tilde{u}(x,y,t,\beta)^{\beta-\Vec{e_j}}(z_j-w_j) \theta_i^{2\beta_i+1} \varphi_i(x,y)(z_i-w_i)\nonumber\\  
= \sum_{\vert \beta \vert=p-1} \sum_{i,j=1}^m \binom{p}{\beta} \theta^{\beta^2} \beta_j \tilde{u}(x,y,t,\beta)^{\beta-\Vec{e_j}}Z_j \theta_i^{2\beta_i+1} \varphi_i(x,y)Z_i \label{eq:5.1}\\
= \sum_{\vert \beta \vert=p-2} \binom{p}{\beta} \theta^{\beta^2} \tilde{u}(x,y,t,\beta)^{\beta} \sum_{i,j=1}^m b_{i,j}(x,y) Z_i Z_j \label{eq:5.2}
\end{flalign}
where $B(x,y)=\left(b_{i,j}(x,y)\right)$ is the $m \times m$ symmetric matrix with entries 
\begin{flalign*}
b_{i,j}(x,y) = \varphi(x,y)
\begin{cases}
    \frac{d_i+d_j}{2} \theta_i^{2\beta_i+1}\theta_j^{2\beta_j+1}, &\text{if} \ i \neq j \\
    d_i\theta_i^{4\beta_i+4}, &\text{if} \ i=j
\end{cases}.
\end{flalign*}
We remark that if we define the $m\times m$ matrix $C=\text{diag}\left(\theta_i^{2\beta_i+1}\right)$ then 
$$\varphi C\mathscr{M}C=B.$$
As a result, from \hyperlink{item:(DIFF)}{(DIFF)} and our hypothesis concerning $\mathscr{M}$, if the claim above is true then the matrix $B$ is nonnegative definite. Consequently, if we can prove the claim above, then we can conclude that
$$ (\mathcal{L}_p[u])'(t)\le C_p\left(1+\mathcal{L}_p[u](t)\right),$$
and complete the proof of the Lemma.

To this end, the right hand side of (\ref{eq:5.1}) is a quadratic form in the variable $Z$. As a result, we can find a real symmetric $m \times m$ matrix $M(\theta,p,x,y,t)$ so that the right hand side of (\ref{eq:5.2}) can be written as 
\begin{flalign*}
Z^T M(\theta,p,x,y,t) Z 
\end{flalign*}
where 
\begin{flalign*}
M(\theta,p,x,y,t) = \left( \sum_{\vert \beta \vert=p-2} \binom{p}{\beta} \theta^{\beta^2} \tilde{u}(x,y,t,\beta)^{\beta} b_{i,j}(x,y) \right)
\end{flalign*}
is an $m\times m$ matrix. We start by writing the right hand side of (\ref{eq:5.1}) as following
\begin{equation*}
\begin{aligned}
\sum_{\vert \beta \vert=p-1} \binom{p}{\beta} \theta^{\beta^2} \beta_j \tilde{u}(x,y,t,\beta)^{\beta-\Vec{e_j}} \theta_i^{2\beta_i+1} \varphi_i(x,y)+\sum_{\vert \beta \vert=p-1} \binom{p}{\beta} \theta^{\beta^2} \beta_i \tilde{u}(x,y,t,\beta)^{\beta-\Vec{e_i}} \theta_j^{2\beta_j+1} \varphi_j(x,y)=I_1+I_2
\end{aligned}
\end{equation*}
It is straightforward to see that
\begin{equation*}
\begin{aligned}
\text{I}_1&=\sum_{|\beta|=p-1,\beta_j\ge1}\left(\begin{array}{c}
p\\
\beta
\end{array}\right)\theta^{\beta^{2}}\beta_j \tilde{u}(x,y,t,\beta)^{\beta-\vec{e_j}}\theta_{i}^{2\beta_{i}+1} \varphi_i(x,y)\nonumber\\
&=\sum_{|\beta|=p-1,\beta_j\ge1}\left(\begin{array}{c}
p\\
\beta-\vec{e_j}
\end{array}\right)\theta^{\left(\beta-\vec{e_j}\right)^{2}} \tilde{u}(x,y,t,\beta)^{\beta-\vec{e_j}}\theta_j^{2(\beta_j-1)+1}\theta_{i}^{2\beta_{i}+1} \varphi_i(x,y)\nonumber\\
&=\sum_{|\beta|=p-2}\left(\begin{array}{c}
p\\
\beta
\end{array}\right)\theta^{\beta^{2}} \tilde{u}(x,y,t,\beta)^{\beta}\theta_j^{2\beta_j+1}\theta_{i}^{2\beta_{i}+1} \varphi_i(x,y).
\end{aligned}
\end{equation*}
Similarly,
\begin{equation*}
\text{I}_2=\sum_{|\beta|=p-2}\left(\begin{array}{c}
p\\
\beta
\end{array}\right)\theta^{\beta^{2}} \tilde{u}(x,y,t,\beta)^{\beta}\theta_j^{2\beta_j+1}\theta_{i}^{2\beta_{i}+1} \varphi_j(x,y).
\end{equation*}
As a result, for $i\ne j$ we have
\[
M_{i,j}(\theta,p,x,y,t)=\sum_{|\beta|=p-2}\left(\begin{array}{c}
p\\
\beta
\end{array}\right)\theta^{\beta^{2}} \tilde{u}(x,y,t,\beta)^{\beta}\theta_j^{2\beta_j+1}\theta_{i}^{2\beta_{i}+1} \frac{\phi_i(x,y)+\phi_j(x,y)}{2}.
\]
Now we consider the case when $i=j\in\{1,...,m\}$. If we denote the coefficient on $(Z_i)^2$ on the right-hand side of (\ref{eq:5.2}) as $I_3$ then
\begin{align*}
I_3&=\sum_{|\beta|=p-1,\beta_i\ge1}\left(\begin{array}{c}
p\\
\beta
\end{array}\right)\theta^{\beta^{2}}\beta_i \tilde{u}(x,y,t,\beta)^{\beta-\vec{e_i}}\theta_{i}^{2\beta_{i}+1} \phi_i(x,y)\nonumber\\
&=\sum_{|\beta|=p-1,\beta_i\ge1}\left(\begin{array}{c}
p\\
\beta-\vec{e_i}
\end{array}\right)\theta^{\left(\beta-\vec{e_i}\right)^{2}} \theta_i^{2(\beta_i-1)+1}\tilde{u}(x,y,t,\beta)^{\beta-\vec{e_i}}\theta_{i}^{2(\beta_{i}-1)+3} \phi_i(x,y)\nonumber\\
&=\sum_{|\beta|=p-2}\left(\begin{array}{c}
p\\
\beta
\end{array}\right)\theta^{\beta^{2}} \theta_i^{2\beta_i+1}\tilde{u}(x,y,t,\beta)^{\beta}\theta_{i}^{2\beta_{i}+3} \phi_i(x,y)\nonumber\\
&=\sum_{|\beta|=p-2}\left(\begin{array}{c}
p\\
\beta
\end{array}\right)\theta^{\beta^{2}} \theta_i^{4\beta_i+4}\tilde{u}(x,y,t,\beta)^{\beta} \phi_i(x,y).
\end{align*}
So, $I_1,I_2$ and $I_3$ prove the claim. Therefore, from above, we see that
\begin{equation*}
I  = -\frac{1}{2} \int_{\Omega \times \Omega} Z^T M(\theta, p,x,y,t) Z \ d(x,y)\le 0.
\end{equation*}
As a result,
$$ (\mathcal{L}_p[u])'(t)\le C_p(t)\left(1+\mathcal{L}_p[u](t)\right),$$
implying there exists a continuous function $K_p\in C(\mathbb{R}_+,\mathbb{R}_+)$ so that 
$\mathcal{L}_p[u](t)\le K_p(t)$ for all $t\ge0$. Therefore, the result follows from (\ref{equivnorm}). \hfill \(\square\)\\

\subsection{Proof of Theorem \ref{DiffLimit}}

Denote $u^{(j)}=\left(u_i^{(j)}\right)$ as the global component-wise nonnegative solution to (\ref{eq:2.17}). We denote $\varphi_j(x,y)=j^{n+2}\psi(j|x-y|)$. Let $T>0$. Then from lemma \ref{Lpbdd}, if $1\le p<\infty$ there exists $C_{p,T}>0$ so that $\|u_i^{(j)}(\cdot,t)\|_{p,\Omega}\le C_{p,T}$ for all $j\in\mathbb{N}$ and $i=1,...,m$. We define
\begin{align}
Y_{j}[z] := \int_{\Omega \times \Omega} \varphi_j(x,y) [z(x)-z(y)]^2 \ d(x,y), \ \ \text{for} \ z \in L^2(\Omega). \label{eq:5.7}
\end{align}
\begin{lemma} \label{lem:5.2} If $T>0$, $j \in\mathbb{N}$, and $1\le p<\infty$ then there exists $C_{p,T}>0$ so that
\begin{subequations}
\begin{align}
     \sum_{i=1}^m \Vert u_{i}^{(j)}(,\cdot,t) \Vert^p_{p,\Omega} & \le C_{p,T}\, \ \text{for all }0\le t\le T, \label{eq:5.8a} \\
     \text{and} \ \int_0^t \sum_{i=1}^m Y_j[u_{i}^{(j)}(\cdot,s)] \ ds & \le C_{p,T} \, \ \text{for all }0\le t\le T. \label{eq:5.8b}
\end{align}
\end{subequations} 
\end{lemma}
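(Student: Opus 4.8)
The plan is to read \eqref{eq:5.8a} directly off the kernel-independent estimate of Lemma \ref{Lpbdd}, and then to extract the dissipation bound \eqref{eq:5.8b} from a standard $L^2$ energy identity in which the nonlocal term produces $-\tfrac{d_i}{2}Y_j$ with a favorable sign. Throughout, the key point to keep in mind is that every $\varphi_j(x,y)=j^{n+2}\psi(j|x-y|)$ satisfies \hyperlink{item:(DIFF)}{(DIFF)}, so the bounds furnished by Lemma \ref{Lpbdd} are uniform in $j$, whereas the sup norm bounds from Theorems \ref{global} and \ref{globalbdd} degenerate as $j\to\infty$.

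For \eqref{eq:5.8a}, note first that Lemma \ref{Lpbdd} already gives, for each integer $q\ge 2$, a continuous function $L_q$ independent of $\varphi$ (hence of $j$) with $\Vert u_i^{(j)}(\cdot,t)\Vert_{q,\Omega}\le L_q(t)$, and $L_q$ is bounded on $[0,T]$. For a general $1\le p<\infty$ I would pick an integer $q\ge\max\{2,p\}$ and use the boundedness of $\Omega$ together with H\"older's inequality, $\Vert z\Vert_{p,\Omega}\le |\Omega|^{1/p-1/q}\Vert z\Vert_{q,\Omega}$, to transfer the $L^q$ bound to an $L^p$ bound; summing over $i$ and raising to the $p$-th power yields \eqref{eq:5.8a}.

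For \eqref{eq:5.8b}, I multiply the $i$-th equation in \eqref{eq:2.17} by $u_i^{(j)}(x,t)$ and integrate over $\Omega$. Applying \eqref{eq:3.1} of Lemma \ref{lem:3.1} with $v=w=u_i^{(j)}(\cdot,t)$ and kernel $d_i\varphi_j$, the nonlocal term becomes exactly $-\tfrac{d_i}{2}Y_j[u_i^{(j)}(\cdot,t)]$, so that
\begin{equation*}
\frac{1}{2}\frac{d}{dt}\Vert u_i^{(j)}(\cdot,t)\Vert_{2,\Omega}^2 = -\frac{d_i}{2}Y_j[u_i^{(j)}(\cdot,t)] + \int_\Omega u_i^{(j)}(x,t)\,f_i(u^{(j)}(x,t))\,dx.
\end{equation*}
Summing over $i$ and integrating from $0$ to $t$, the nonnegative term $\tfrac12\sum_i\Vert u_i^{(j)}(\cdot,t)\Vert_{2,\Omega}^2$ may be dropped from the left, leaving
\begin{equation*}
\Big(\min_{1\le i\le m}\frac{d_i}{2}\Big)\int_0^t\sum_{i=1}^m Y_j[u_i^{(j)}(\cdot,s)]\,ds \le \frac12\sum_{i=1}^m\Vert v_i\Vert_{2,\Omega}^2 + \int_0^t\int_\Omega\sum_{i=1}^m u_i^{(j)}\,f_i(u^{(j)})\,dx\,ds.
\end{equation*}
The first term on the right is finite and independent of $j$ by \hyperlink{item:(INIT)}{(INIT)}. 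For the reaction term I would invoke \hyperlink{item:(Lp Continuity)}{(Lp Continuity)}: since $|f_i(u)|\le L(t)\big(\sum_{i=1}^m u_i+1\big)^r$ and the $u_i^{(j)}$ are nonnegative, the integrand is dominated by $L(t)\big(\sum_{i=1}^m u_i^{(j)}+1\big)^{r+1}$, whose spatial integral is controlled by $\Vert u^{(j)}(\cdot,s)\Vert_{r+1,\Omega}^{r+1}$ plus a multiple of $|\Omega|$. By \eqref{eq:5.8a} with $p=r+1$ this is bounded on $[0,T]$ uniformly in $j$, and integrating in $s$ gives a $j$-independent bound. Dividing by $\min_i d_i/2>0$ then yields \eqref{eq:5.8b}.

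The main obstacle is not the energy identity itself but guaranteeing that the right-hand side remains bounded \emph{uniformly in $j$}; this is precisely where the kernel independence of Lemma \ref{Lpbdd} is indispensable, since any estimate carrying the $j$-dependent quantity $\mu_j$ would be useless in the diffusive limit. A secondary technical point is the sign of the nonlocal contribution: it is the symmetrization identity \eqref{eq:3.1} that converts the bilinear nonlocal term into the nonnegative quadratic form $Y_j$, allowing it to be moved to the left-hand side and bounded below rather than merely estimated.
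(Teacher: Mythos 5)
Your proposal is correct and follows essentially the same route as the paper: \eqref{eq:5.8a} is read off from Lemma \ref{Lpbdd}, and \eqref{eq:5.8b} comes from multiplying the PDE by $u_i^{(j)}$, using the symmetrization identity of Lemma \ref{lem:3.1} to turn the nonlocal term into $-\tfrac{d_i}{2}Y_j[u_i^{(j)}]$, bounding the reaction term via \hyperlink{item:(Lp Continuity)}{(Lp Continuity)} together with the $j$-independent $L^p$ bounds, and integrating in time. Your added remarks (the H\"older interpolation to cover non-integer $1\le p<\infty$ and the explicit emphasis on uniformity in $j$) are consistent with, and slightly more careful than, the paper's write-up.
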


\textbf{Proof:} Note that (\ref{eq:5.8a}) was obtained in lemma \ref{Lpbdd}. Let $0\le t\le T$ and $j \in \mathbb{N}$. If we multiply the PDE for $u_i^{(j)}$ (\ref{eq:2.17}) by $u_i^{(j)}$  and integrate over $\Omega$ then
\begin{flalign}
\frac{1}{2} \frac{d}{dt} \int_{\Omega} u_{i}^{(j)}(x,t)^2 \ dx & =  d_i \int_{\Omega \times \Omega} \varphi_j(x,y) (u_{i}^{(j)}(y,t)-u_{i}^{(j)}(x,t)) u_{i}^{(j)}(x,t) \ dx \ dy + \int_{\Omega} f_{i}(u^{(j)}(x,t)) u_{i}^{(j)}(x,t) \ dx \nonumber \\
& = A+B, \label{eq:5.9}
\end{flalign}
where 
\begin{flalign*}
A & =  d_i \int_{\Omega \times \Omega} \varphi_j(x,y) (u_{i}^{(j)}(y,t)-u_{i}^{(j)}(x,t)) u_{i}^{(j)}(x,t) \ dxdy  = -  \frac{d_i}{2} \int_{\Omega \times \Omega} \varphi_j(x,y) (u_{i}^{(j)}(y,t)-u_{i}^{(j)}(x,t))^2 \ d(x,y) \\
& = -\sum_{i=1}^m \frac{d_i}{2} Y_j[u_{i}^{(j)}(t)]
\end{flalign*}
from lemma \ref{lem:3.1}. Also, from (Lp Continuity) we have
$$B \le \int_{\Omega} L(t)\left(\sum_{i=1}^m u_i^{(j)}(x,t)\,+1\right)^r u_i^{(j)}(x,t)\ dx. $$
Consequently, from (\ref{eq:5.8a}) there is a constant $K_{p,T}>0$ so that $B\le K_{p,T}$. Inserting this information in (\ref{eq:5.9}) gives 
$$\frac{1}{2} \frac{d}{dt} \int_{\Omega} u_{i}^{(j)}(x,t)^2 \ dx+\sum_{i=1}^m \frac{d_i}{2} Y_j[u_{i}^{(j)}(t)]\le K_{p,T}.$$
If we integrate this with respect to $t$ from $0$ to $T$ we get our result. \hfill$\square$
 
\begin{lemma} \label{lem:5.3} There exists $C_T>0$ such that
\begin{flalign}
   \int_0^t \big\Vert \frac{\partial}{\partial t} u_{i}^{(j)}(\cdot,s) \big \Vert^2_{\left(W_{n+1}^{(1)}(\Omega)\right)'} \ ds \le C_T \label{eq:5.10}
\end{flalign}
for all $0\le t\le T$, $i=1,...,m$ and $j \in \mathbb{N}$.
\end{lemma}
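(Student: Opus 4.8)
The plan is to estimate the dual norm $\Vert \partial_t u_i^{(j)}(\cdot,s)\Vert_{(W^{1,n+1})'}$ by testing the PDE in (\ref{eq:2.17}) against an arbitrary $g\in W^{1,n+1}(\Omega)$ with $\Vert g\Vert_{W^{1,n+1}}\le 1$. Multiplying the equation for $u_i^{(j)}$ by $g$ and integrating over $\Omega$ splits the quantity $\int_\Omega \partial_t u_i^{(j)}\,g\,dx$ into a nonlocal diffusion contribution and a reaction contribution, which I would bound separately. The aim is to show that, after taking the supremum over such $g$, one obtains a pointwise-in-time bound of the form $\Vert \partial_t u_i^{(j)}(\cdot,t)\Vert_{(W^{1,n+1})'}\le c_1 \big(Y_j[u_i^{(j)}(t)]\big)^{1/2}+c_2 L(t)$ with constants independent of $j$; squaring and integrating then reduces everything to estimates already in hand.

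For the nonlocal term I would first apply the symmetrization identity (\ref{eq:3.1}) of Lemma \ref{lem:3.1} (with $v=g$, $w=u_i^{(j)}$) to rewrite $\int_\Omega\int_\Omega g(x)\varphi_j(x,y)(u_i^{(j)}(y,t)-u_i^{(j)}(x,t))\,dy\,dx$ as $-\tfrac12\int_\Omega\int_\Omega (g(y)-g(x))\varphi_j(x,y)(u_i^{(j)}(y,t)-u_i^{(j)}(x,t))\,dy\,dx$, and then apply Cauchy--Schwarz to bound its absolute value by $\tfrac12 \big(Y_j[g]\big)^{1/2}\big(Y_j[u_i^{(j)}(t)]\big)^{1/2}$. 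The crucial step is controlling $Y_j[g]$ uniformly in $j$. Writing $\varphi_j(x,y)=j^{n+2}\psi(j|x-y|)$ and inserting the factor $|x-y|^2/|x-y|^2$, I would set $\rho_j(z)=|z|^2 j^{n+2}\psi(j|z|)$ so that $Y_j[g]=\int_{\Omega\times\Omega}\frac{(g(x)-g(y))^2}{|x-y|^2}\rho_j(x-y)\,d(x,y)$, which Theorem \ref{Th:3.2} (with $p=2$) bounds by $C_{2,\Omega}\Vert g\Vert_{W^{1,2}}^2\Vert \rho_j\Vert_{L^1}$. The key computation is that the change of variables $w=jz$ gives $\Vert \rho_j\Vert_{L^1}=\int_{\mathbb{R}^n}|w|^2\psi(|w|)\,dw=M$, independent of $j$; combined with the continuous embedding $W^{1,n+1}(\Omega)\hookrightarrow W^{1,2}(\Omega)$ (valid since $\Omega$ is bounded and $n+1\ge 2$), this yields $Y_j[g]\le C\Vert g\Vert_{W^{1,n+1}}^2$ with $C$ independent of $j$.

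For the reaction term I would use the embedding $W^{1,n+1}(\Omega)\hookrightarrow C(\overline\Omega)$ from Theorem \ref{Th:3.1} (valid since $n+1>n$), giving $\Vert g\Vert_\infty\le C\Vert g\Vert_{W^{1,n+1}}$. Then the polynomial bound in \hyperlink{item:(Lp Continuity)}{(Lp Continuity)} yields $\big|\int_\Omega f_i(u^{(j)})g\,dx\big|\le C\Vert g\Vert_{W^{1,n+1}}L(t)\int_\Omega\big(\sum_{k=1}^m u_k^{(j)}+1\big)^r\,dx$, and the $L^p$ bound (\ref{eq:5.8a}) of Lemma \ref{lem:5.2} (applied with $p=r$, together with $\big(\sum_k u_k+1\big)^r\le C_{m,r}\big(\sum_k u_k^r+1\big)$) makes the last integral bounded uniformly in $j$ and $t\in[0,T]$.

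Combining the two estimates and taking the supremum over $\Vert g\Vert_{W^{1,n+1}}\le 1$ gives $\Vert \partial_t u_i^{(j)}(\cdot,t)\Vert_{(W^{1,n+1})'}\le c_1\big(Y_j[u_i^{(j)}(t)]\big)^{1/2}+c_2 L(t)$. Squaring, using $(a+b)^2\le 2a^2+2b^2$, and integrating in time over $[0,t]\subset[0,T]$, the $Y_j$ term is controlled by (\ref{eq:5.8b}) of Lemma \ref{lem:5.2} and the $L(t)^2$ term by $\int_0^T L(s)^2\,ds<\infty$ (continuity of $L$). Both bounds are independent of $j$, $i$ and $t$, giving the desired $C_T$. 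I expect the main obstacle to be the $j$-independent control of $Y_j[g]$: a naive estimate fails because $\varphi_j$ concentrates at the diagonal as $j\to\infty$, and the essential trick is to absorb exactly two powers of $|x-y|$ into the kernel so that $\Vert \rho_j\Vert_{L^1}=M$ stays constant --- this is precisely where Theorem \ref{Th:3.2} and the second-moment normalization of $\psi$ enter.
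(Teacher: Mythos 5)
Your proposal is correct and follows essentially the same route as the paper: test the PDE against $\zeta\in W^{1,n+1}(\Omega)$, symmetrize the nonlocal term via Lemma \ref{lem:3.1}, apply Cauchy--Schwarz to split off $\big(Y_j[u_i^{(j)}(t)]\big)^{1/2}$, control the remaining $\zeta$-factor uniformly in $j$ via Theorem \ref{Th:3.2}, bound the reaction term using \hyperlink{item:(Lp Continuity)}{(Lp Continuity)} together with $W^{1,n+1}(\Omega)\hookrightarrow L^\infty(\Omega)$ and Lemma \ref{lem:5.2}, then square, integrate in time, and invoke (\ref{eq:5.8b}).

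The one step where you genuinely diverge is the $j$-uniform control of the $\zeta$-factor. The paper extracts the two powers of $|x-y|$ by using that $\operatorname{supp}\varphi_j\subset\{|x-y|\le C_6/j\}$, i.e.\ the compact support of $\psi$, which converts $j^{n+2}\psi(j|x-y|)$ into $j^{n}\psi(j|x-y|)$ with $L^1$-norm $\Vert\psi\Vert_{L^1}$ independent of $j$. You instead absorb $|x-y|^2$ directly into the kernel, setting $\rho_j(z)=|z|^2 j^{n+2}\psi(j|z|)$ and computing $\Vert\rho_j\Vert_{L^1}=M$ from the second-moment normalization. Both are legitimate; yours is marginally preferable here because the hypotheses stated for (\ref{eq:2.17}) require only the finite second moment of $\psi$ and do not explicitly assume compact support (that assumption is only listed for the Lauren\c{c}ot--Walker system (\ref{eq:2.14})), so your variant matches the stated hypotheses exactly, whereas the paper's support argument implicitly reimports compactness of $\operatorname{supp}\psi$.
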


\textbf{Proof:} Let $\zeta \in W_{n+1}^{(1)}(\Omega)$, $t>0$, and $j \in \mathbb{N}$. Then we have,
\begin{flalign}
   \left| \int_{\Omega} \zeta(x) \frac{\partial}{\partial t}  u_{i}^{(j)}(x,t) \ dx \right| & \le \big \vert \int_{\Omega} \zeta(x) d_i \int_{\Omega} \varphi_j(x,y) (u_{i}^{(j)}(y,t)-u_{i}^{(j)}(x,t)) \ dy \ dx\big \vert  + \big \vert\int_{\Omega} \zeta(x) f_{i}(t,u^{(j)}(x,t)) \ dx \big \vert \nonumber \\  
   & \le J_1+J_2 \label{eq:5.11}
 \end{flalign}  
where from lemma \ref{lem:3.1} we can write,
\begin{flalign*}
   J_1 & = \big \vert d_i \int_{\Omega \times \Omega} \zeta(x) \varphi_j(x,y) (u_{i}^{(j)}(y,t)-u_{i}^{(j)}(x,t)) \ dx \ dy \big \vert  =  \frac{d_i}{2} \big \vert \int_{\Omega \times \Omega} \varphi_j(x,y) (\zeta(x)-\zeta(y)) (u_{i}^{(j)}(y,t)-u_{i}^{(j)}(x,t)) \ dx \ dy \big \vert \\
& \le \frac{d_i}{2} \big [ \big( \int_{\Omega \times \Omega} \varphi_j(x,y) (\zeta(x)-\zeta(y))^2 \ dx \ dy \big)^{\frac{1}{2}} \big( \int_{\Omega \times \Omega} \varphi_j(x,y) ((u_{i}^{(j)}(y,t)-u_{i}^{(j)}(x,t))^2 \ dx \ dy \big)^{\frac{1}{2}} \big] 
\end{flalign*}
from Holder's inequality. Since, the support of $\varphi_j(x,y)$ is a subset of $\big\{(x,y) \in \mathbb{R}^{2n} : \vert x-y \vert \le \frac{C_6}{j} \big \}$, we can follow the proof of Lemma 4.2 in \cite{laurencot2023nonlocal} and use Lemma \ref{lem:5.2} and \hyperlink{item:(DIFF2)}{(DIFF2)} to find $C_{7,T}>0$ so that 
\begin{equation*}
J_1 \le C_{7,T} \Vert \zeta \Vert_{n+1,\Omega}^{(1)}\big(\sum_{i=1}^m Y_j[u_{i}^{(j)}(t)]\big)^{\frac{1}{2}}\,\text{for }0\le t\le T.
\end{equation*}
In addition, there exists $C_{8,T}>0$ so that
\begin{flalign*}
J_2 & = \big \vert \int_{\Omega} \zeta(x)  f_{i}(t,u_{i}^{(j)}(x,t)) \ dx \big \vert  \le \big \vert \int_{\Omega} \zeta(x) L(t) \big(\sum_{i=1}^m u_{i}^{(j)}(x,t) + 1 \big)^r \ dx \big \vert \le C_{8,T} \Vert \zeta \Vert_{n+1,\Omega}^{(1)}\,\text{for all }0\le t\le T
\end{flalign*}
from \hyperlink{item:(Lp Continuity)}{(Lp Continuity)}, Lemma \ref{lem:5.2} and the fact that $W_{n+1}^{(1)}(\Omega)$ is continuously embedded in $L^{\infty}(\Omega)$.
 Therefore, $(\ref{eq:5.11})$ gives
$$\big \vert \int_{\Omega} \zeta(x) \frac{\partial}{\partial t}  u_{i}^{(j)}(x,t) \ dx \big \vert  \le C_{7,T} \Vert \zeta \Vert_{n+1}^{(1)}\big(Y_j[u_{i}^{(j)}(t)]\big)^{\frac{1}{2}}+ C_{8,T} \Vert \zeta \Vert_{n+1,\Omega}^{(1)}.$$
Integrating with respect to $t$, and applying duality and lemma \ref{lem:5.2} gives the result. \hfill $\square$\\

The following result is given in \cite{laurencot2023nonlocal}.

\begin{lemma} \label{lem:5.4} If $T>0$ then the sequence $\{u^{(j)}\}$ is relatively compact in $L^2(Q_{0,T},\mathbb{R}^m)$ and their cluster points belong to $L^2((0,T),H^1(\Omega,\mathbb{R}^m))$.
\end{lemma}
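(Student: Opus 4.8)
The plan is to prove compactness one component at a time: since $\{u^{(j)}\}$ is a finite collection of scalar families $\{u_i^{(j)}\}_j$, it suffices to show each $\{u_i^{(j)}\}_j$ is relatively compact in $L^2(\Omega\times(0,T))$ with cluster points in $L^2((0,T),H^1(\Omega))$. The three ingredients I would assemble are already available and uniform in $j$: the pointwise-in-time bound $\sup_{0\le t\le T}\|u_i^{(j)}(\cdot,t)\|_{2,\Omega}\le C$ and the dissipation bound $\int_0^T Y_j[u_i^{(j)}(\cdot,t)]\,dt\le C$ from Lemma \ref{lem:5.2}, together with the dual-space bound $\int_0^T\|\tfrac{\partial}{\partial t}u_i^{(j)}(\cdot,t)\|^2_{(W^{1,n+1})'}\,dt\le C$ from Lemma \ref{lem:5.3}.

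The first step is to recast the nonlocal energy $Y_j$ as a Bourgain–Brezis–Mironescu (BBM) functional. Writing $\rho_j(z)=\tfrac{1}{M}j^{n}(j|z|)^2\psi(j|z|)$ with $M=\int_{\mathbb{R}^n}|z|^2\psi(|z|)\,dz$, the change of variables $w=jz$ shows $\int_{\mathbb{R}^n}\rho_j=1$ and, for every fixed $\delta>0$, $\int_{|z|>\delta}\rho_j(z)\,dz=\tfrac1M\int_{|w|>j\delta}|w|^2\psi(|w|)\,dw\to0$ as $j\to\infty$; hence $\{\rho_j\}$ is a sequence of radial mollifiers concentrating at the origin. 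Moreover $Y_j[g]=M\int_{\Omega\times\Omega}\tfrac{|g(x)-g(y)|^2}{|x-y|^2}\,\rho_j(x-y)\,d(x,y)$, so the dissipation bound says exactly that the BBM energies of $u_i^{(j)}(\cdot,t)$ are uniformly integrable in time. With this reformulation the slice-wise spatial compactness follows from the concentrating-kernel compactness theorem in the BBM/Ponce circle of ideas: a family bounded in $L^2(\Omega)$ whose BBM energies relative to concentrating mollifiers are uniformly bounded is relatively compact in $L^2(\Omega)$, with limits in $H^1(\Omega)$.

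To upgrade this to space–time compactness I would combine the slice-wise spatial compactness with the time-derivative control through an Aubin–Lions–Simon argument, applying the Fréchet–Kolmogorov–Riesz criterion on $Q=\Omega\times(0,T)$. Equicontinuity under time translations is the routine half: from Lemma \ref{lem:5.3} and Cauchy–Schwarz, $\int_0^{T-s}\|u_i^{(j)}(\cdot,t+s)-u_i^{(j)}(\cdot,t)\|^2_{(W^{1,n+1})'}\,dt\le s\,C_T\to0$ uniformly in $j$, and this is promoted to smallness in $L^2(\Omega)$ by an Ehrling-type interpolation between the spatial bound and the weak space $(W^{1,n+1})'$; equicontinuity under space translations is supplied by the uniform BBM energy bound. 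Passing to a subsequence converging in $L^2(Q)$, the regularity of the limit is then obtained from lower semicontinuity: if $u_i^{(j)}\to u_i$ in $L^2(Q)$ while $\int_0^T Y_j[u_i^{(j)}]\,dt\le C$, the BBM $\liminf$ inequality combined with Fatou's lemma gives $\int_0^T\|\nabla u_i(\cdot,t)\|^2_{2,\Omega}\,dt<\infty$, so $u_i\in L^2((0,T),H^1(\Omega))$.

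The main obstacle is precisely the space–time compactness with the $j$-dependent, concentrating seminorms: a naive Aubin–Lions fails because the regular space encoding the spatial bound varies with $j$ and is not compactly embedded in $L^2(\Omega)$ in any fixed-space sense, while the upper BBM estimate (Theorem \ref{Th:3.2}) runs in the wrong direction for deducing translation smallness. Making the spatial translation estimate uniform in $j$—equivalently, establishing the compactness and lower-bound half of the BBM theory for the mollifiers $\rho_j$, which is also what gives the $j$-uniformity of the Ehrling step above—is the delicate point, and this is exactly the content of the compactness result of Laurençot–Walker \cite{laurenccot2023nonlocal} that we invoke. The role of Lemmas \ref{lem:5.2} and \ref{lem:5.3} is then to verify its hypotheses in our $m$-component, general-reaction setting.
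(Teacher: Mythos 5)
Your proposal is correct and matches the paper's treatment: the paper offers no proof of Lemma \ref{lem:5.4} at all, simply citing \cite{laurenccot2023nonlocal}, and your outline is a faithful reconstruction of that cited argument --- rescaling $Y_j$ into a Bourgain--Brezis--Mironescu energy with concentrating mollifiers, combining the uniform bounds of Lemmas \ref{lem:5.2} and \ref{lem:5.3} in an Aubin--Lions/Fr\'echet--Kolmogorov scheme, and using lower semicontinuity to place the cluster points in $L^2((0,T),H^1(\Omega))$. Since you, like the paper, ultimately defer the delicate $j$-uniform spatial compactness step to the Lauren\c{c}ot--Walker result, the two routes are essentially identical, with yours supplying more of the scaffolding (including the correct observation that Theorem \ref{Th:3.2} gives only the upper, non-compactness, half of the BBM theory).
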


\noindent\textbf{Proof of Theorem \ref{DiffLimit}:} 
From lemma \ref{lem:5.4} there exists a subsequence $\{u^{(j_k)}\}$ and a component-wise nonnegative function $U$ such that
$$U\in C([0,T],\left(W_{n+1}^{(1)}(\Omega)\right)'\cap L_\infty\left((0,T),L^2(\Omega)\cap L^2\left((0,T),H^1(\Omega)\right)\right)$$
and
\begin{equation}\label{limit1}
\lim_{k\to\infty}\sup_{0\le t\le T}\|\left(u_i^{(j_k)}-U_i\right)(\cdot,t)\|_{(W_{n+1}^{(1)}(\Omega))'}=0\,\text{for all }i=1,...,m,
\end{equation}
\begin{equation}\label{limit2}
\lim_{k\to\infty}\int_0^T\|\left(u_i^{(j_k)}-U_i\right)(\cdot,t)\|_{2,\Omega}^2=0\,\text{for all }i=1,...,m,
\end{equation}
and
\begin{equation}\label{limit3}
\lim_{k\to\infty}\big\vert \left(u_i^{(j_k)}-U_i\right)(x,t)\big\vert=0\,\text{for all }i=1,...,m\,\text{and a.e. }(x,t)\in Q_{0,T}.
\end{equation}
Furthermore, since $u^{(j_k)}\to U$ in $L^2(Q_{0,T})$ and $u^{(j_k)}$ is bounded in $L^p(\Omega)$ independent of $k\in\mathbb{N}$ for every $1\le p<\infty$, it follows that $u^{(j_k)}\to U$ in $L^p(Q_{0,T})$ for every $1\le p<\infty$. Consequently, from \hyperlink{item:(Lp Continuity)}{(Lp Continuity)}, we can conclude that $f_i(u^{(j_k)})\to f_i(U)$ in $L^2(Q_{0,T})$ for each $i=1,...,m$. The result follows by applying Lemma \ref{prop:3.1} and a diagonalization argument. \hfill $\square$

\subsection{Proof of Theorem \ref{DiffLimit2}}

Throughout this subsection, we assume $\lambda\in\mathbb{N}$ such that $\lambda\ge 2$ and $\Gamma_j$ is replaced in (\ref{eq:2.19}) with  
$$\Gamma_j(u_j)(x,t)=\int_\Omega \lambda^{n+2}\psi(\lambda|x-y|)(u_j(y,t)-u_j(x,t))dy.$$ As a result, we denote the solution to (\ref{eq:2.19}) by $u^{(\lambda)}$.
Theorem \ref{globalmixed} implies (\ref{eq:2.19}) has a global component-wise nonnegative solution $u^{(\lambda)}$. In order to prove Theorem \ref{DiffLimit2} we need to obtain the results in the proof of Theorem \ref{globalmixed} in such a way that the $L^p$ bounds that are obtained in that setting are independent of $\lambda^{n+2}\psi(\lambda|x-y|)$.  

To this end, note that \hyperlink{item:(INT-SUM)}{(INT-SUM)}, the symmetry of $\psi(\lambda|x-y|)$ and the homogeneous Neumann boundary conditions on $u_j$ for $j\in\{m_1+1,...,m\}$  imply
\begin{equation*}
\frac{\partial}{\partial t}\sum_{i=1}^m\int_\Omega a_{m,i}u_i^{(\lambda)}(x,t)dx\le L\left(\sum_{i=1}^m\int_\Omega u_i^{(\lambda)}(x,t)+1\right)\le L\alpha\left(\sum_{i=1}^m\int_\Omega a_{m,i}u_i^{(\lambda)}(x,t)+1\right)
\end{equation*}
where $\alpha=\min_{i\in\{1,...,m\}}a_{m,i}>0$. Consequently, it follows that there exists a nondecreasing function $g\in C(\mathbb{R}_+,\mathbb{R}_+)$ that is independent of $\lambda^{n+2}\psi(\lambda|x-y|)$ such that 
\begin{equation}\label{L1-diffbound}
\|u_i^{(\lambda)}(\cdot,t)\|_{1,\Omega}\le g(t)\quad\text{for all }i\in\{1,...,m\}\,\text{and }t\ge0.
\end{equation}

Now define $U^{(\lambda)}=(u_i^{(\lambda)})_{i=1}^{m_1}$ and let $\tilde{A}=(a_{i,j})_{i,j\le m_1}$. Then proceeding as in the proof of Lemma \ref{Lpbdd}, we find that if $2\le p'\in\mathbb{N}$ then
$$ (\mathcal{L}_{p'}[U^{(\lambda)}])'(t)\le C_{p'}(t)\left(1+\mathcal{L}_{p'}[U^{(\lambda)}](t)+\sum_{i=m_1+1}^m\int_\Omega u_i(x,t)^{p'}dx\right),$$
with $C_{p'}$ independent of $\lambda^{n+2}\psi(\lambda|x-y|)$. It follows that there exists $M_{p'}\in C(\mathbb{R}_+,\mathbb{R}_+)$ which is nondecreasing and $M_{p'}(0)=0$ such that
\begin{flalign}
\sum_{i=1}^{m_1} \|u_{i}^{(\lambda)} \| _{p',Q_{\tau,T}} & \le M_{p'}(T-\tau)\left(\sum_{i=m_1+1}^{m} \|u_i^{(\lambda)}(\cdot,\tau)\|_{p',\Omega}+1 + \sum_{i=m_1+1}^{m} \|u_i^{(\lambda)}\|_{p',Q_{\tau,T}}\right). \label{Lp-diffbound}
\end{flalign}

Now we proceed as in the proof of Theorem \ref{globalmixed}. To this end, let $0\le\tau<T$, $j\in\{m_1+1,\dots,m\}$, $1<p<<\frac{n+2}{2}$ (with the proximity to $1$ to be specified below) and $\theta \in L^p(Q_{\tau,T})$ such that $\theta\ge0$ and $\|\theta\|_{p,Q_{\tau,T}}=1$. Let $\phi_j$  be the unique nonnegative solution in $W_p^{(2,1)}(Q_{\tau,T})$ solving
\begin{equation} \label{eq:phidiff}
\left\{\begin{aligned}
    \frac{\partial}{\partial t} \phi_{j} & = - d_{j} \Delta \phi_{j} - \theta, &&Q_{\tau,T},\\
    \frac{\partial}{\partial \eta} \phi_{j} & = 0, &&\partial\Omega \times (\tau,T),\\
    \phi_{j} & = 0,&& \Omega \times \{T\}.
\end{aligned}\right.
\end{equation}
From Lemma \ref{lem:3.3} there exists $C_{p,d_j,T-\tau}>0$ so that $$\|\phi_j(\cdot,\tau)\|_{p,\Omega},\,\|\phi_j\|_{p,Q_{\tau,T}}^{(2,1)}\le C_{p,d_j,T-\tau},$$ 
along with additional embedding results. We recall that $C_{p,d_j,T-\tau}$ can be chosen to be nondecreasing in $T-\tau$, so we can choose $C_{p,T-\tau}=\max_{\{j=m_1+1,...,m\}}C_{p,d_j,T-\tau}$, and without loss of generality assume $\|\Delta \phi_j\|_{p,Q_{\tau,T}}\le C_{p,T-\tau}$. Finally, let $p'=\frac{p}{p-1}$ and recall the function $M_{p'}$ from (\ref{Lp-diffbound}). Note that \hyperlink{item:(INT-SUM)}{(INT-SUM)} implies $a_{i,j}>0$ for all $i\ge m_1$ and $j\le i$. Let $k\in\{m_1+1,\dots,m\}$ and set $j=k$ in (\ref{eq:phidiff}). In addition, let $\mathbb{I}_k=\{j\in\mathbb{N}\, |\,j\ge m_1+1\,\text{and } j<k\}$. Then
\begin{equation*}
\begin{aligned}
\int_\tau^T\int_\Omega \sum_{i=1}^k a_{k,i}u_i^{(\lambda)}\theta dxdt&=\int_\tau^T\int_\Omega \sum_{i=1}^k a_{k,i}u_i^{(\lambda)}(-d_k\Delta\phi_k-\partial_t\phi_k)dxdt\\
&=-\int_\tau^T\int_\Omega \sum_{i=1}^k a_{k,i}u_i^{(\lambda)} d_k\Delta\phi_kdxdt+\int_\Omega \sum_{i=1}^k a_{k,i}u_i^{(\lambda)}(x,\tau)\phi_k(x,\tau)dx+\int_\tau^T\int_\Omega \phi_k\sum_{i=1}^k a_{k,i}\partial_tu_i^{(\lambda)}dxdt\\
&=-\int_\tau^T\int_\Omega \sum_{i=1}^{m_1} a_{k,i}u_i^{(\lambda)} d_k\Delta\phi_kdxdt+\int_\Omega \sum_{i=1}^k a_{k,i}u_i^{(\lambda)}(x,\tau)\phi_k(x,\tau)dx\\
&+\int_\tau^T\int_\Omega \phi_k(x,t)\sum_{i=1}^{m_1} a_{k,i}\int_\Omega d_i\lambda^{n+2}\psi(\lambda|x-y|)(u_i^{(\lambda)}(y,t)-u_i^{(\lambda)}(x,t))dydxdt\\
&+\int_\tau^T\int_\Omega \sum_{i\in \mathbb{I}_k} a_{k,i} (d_i-d_k)u_i^{(\lambda)}\Delta\phi_k dxdt+\int_\tau^T\int_\Omega \phi_k L\left(\sum_{i=1}^{m} u_i^{(\lambda)}+1\right)dxdt\\
&=I_1+I_2+I_3+I_4+I_5.
\end{aligned}
\end{equation*}
From the bounds in (\ref{L1-diffbound}) and (\ref{Lp-diffbound}) and the arguments in the proof of Theorem \ref{globalmixed}, it is clear that if we can obtain an estimate for $I_3$ that is independent of $\lambda$ by choosing $\tau$ sufficiently close to $T$, then we can obtain estimates for $\|u_i\|_{p',Q_{\tau,T}}$ for all $i\in\{1,...,m\}$ that are independent of $\lambda$, and only depend on $T-\tau$, $p'$ and parameters in \hyperlink{item:(INT-SUM)}{(INT-SUM)}. Furthermore, once this is accomplished we the result will follow. 

To this end, let $$A_{max}=\max_{i,j\in\{1,...,m\}}a_{i,j}\,>0.$$
We note that $Q_{\tau,T}$ is a Lipschitz domain. As a result, there is an extension operator $E:W_p^{(2,1)}(Q_{\tau,T})\to W_p^{(2,1)}(\mathbb{R}^n\times(\tau,T))$ and a constant $C_{p,E}>0$ such that $E(g)=g$ on $Q_{\tau,T}$ and $\|E(g)(\cdot,t)\|_{p,\mathbb{R}^n}^{(2)}\le C_{p,E}\|g(\cdot,t)\|_{p,\Omega}^{(2)}$. For each $i\in\{1,...,m_1\}$ extend $u_i^{(\lambda)}(\cdot,t)$ outside $\Omega$ as the zero function. Define $$W_{\lambda}(z)=\lambda^{n+2}\psi(\lambda|z|).$$
Note that 
$$\int_{\mathbb{R}^n}W_\lambda(z)|z|^2dz=\int_{\mathbb{R}^n}\lambda^{n+2}\psi(\lambda|z|)|z|^2dz=\int_{\mathbb{R}^n}\psi(|u|)|u|^2du=M$$
from the substitution $u=\lambda z$ and \hyperlink{item:(DIFF2)}{(DIFF2)}.
Then Lemma \ref{lem:2nddiff} implies
\begin{equation*}
\begin{aligned}
\int_{\mathbb{R}^n} \int_{\mathbb{R}^n} W_\lambda(x-y)E(\phi_k)(x,t)(u_i^{(\lambda)}(y,t)-u_i^{(\lambda)}(x,t))dydx&\le \frac{M}{2}\|E(\phi_k)(\cdot,t)\|_{W_p^{(2)}(\mathbb{R}^n)}\|u_i^{(\lambda)}(\cdot,t)\|_{p',\Omega}\\
&\le \frac{MC_{p,E}}{2}\|\phi_k(\cdot,t)\|_{W_p^{(2)}(\Omega)}\|u_i^{(\lambda)}(\cdot,t)\|_{p',\Omega}.
\end{aligned}
\end{equation*}
As a result, 
\begin{equation}\label{I3ind}
\begin{aligned}
I_3&\le d_{max}A_{max}\frac{MC_{p,E}}{2}\int_\tau^T \|\phi_k(\cdot,t)\|_{W_p^{(2)}(\Omega)}\|u_i^{(\lambda)}(\cdot,t)\|_{p',\Omega}dt\\
&\le K_p C_{p,(T-\tau)}M_{p'}(T-\tau)\left(\sum_{i=m_1+1}^{m} \|u_i^{(\lambda)}(\cdot,\tau)\|_{p',\Omega}+1 + \sum_{i=m_1+1}^{m} \|u_i^{(\lambda)}\|_{p',Q_{\tau,T}}\right).
\end{aligned}
\end{equation}
Note that (\ref{I3ind}) does not depend on $\lambda$, and as a result, we can proceed as in the proof of Theorem \ref{globalmixed} to show that if $p'$ is sufficiently large then there exists $\beta_{p'}>0$ so that if $\tau\ge 0$ then we obtain bounds for $\|u_i\|_{p',Q_{\tau,\tau+\beta_{p'}}}$ for all $i\in\{1,...,m\}$ that are independent of $\lambda$. Consequently, given any $T>0$ we can obtain bounds for $\|u_i\|_{p',Q_{0,T}}$ for all $i\in\{1,...,m\}$ that are independent of $\lambda$. The remainder of the proof follows similar to the proof of Theorem \ref{DiffLimit}.

\section{Examples and Numerical Simulations}

In this section, we provide some examples of nonlocal reaction-diffusion systems which satisfy the theorems stated in section \ref{Main Results}. Then we perform some numeric calculations to illustrate both the diffusive limit and the difference in behavior of solutions to systems with nonlocal diffusion with solutions to systems with local diffusion. 

\subsection{Examples}

Consider the following reversible chemical reactions:
\[
\text{S}_1 \xrightleftharpoons[k_1(x,t)]{k_1(x,t)} 2\text{S}_2,
\quad
\text{S}_2 \xrightleftharpoons[k_2(x,t)]{k_2(x,t)} 2\text{S}_3.
\]
The associated reaction-diffusion system was studied in \cite{desvillettes2024trend} and is listed below.
\[
\left\{
\begin{aligned}
\partial_t u_1 &= \nabla \cdot (d_1(x,t) \nabla u_1) + k_1(x,t) \left( u_2^2 - u_1 \right), &&\Omega \times \mathbb{R}_+, \\
\partial_t u_2 &= \nabla \cdot (d_2(x,t) \nabla u_2) -2k_1(x,t) \left( u_2^2 - u_1 \right) + k_2(x,t) \left( u_3^2 - u_2 \right), && \Omega \times \mathbb{R}_+, \\
\partial_t u_3 &= \nabla \cdot (d_3(x,t) \nabla u_3) -2k_2(x,t) \left( u_3^2 - u_2 \right), && \Omega \times \mathbb{R}_+, \\
\frac{\partial u_1}{\partial \eta} &= \frac{\partial u_2}{\partial \eta} = \frac{\partial u_3}{\partial \eta} = 0, && \partial\Omega \times \mathbb{R}_+, \\
u_1(\cdot, 0) &= u_{1,0},\quad u_2(\cdot, 0) = u_{2,0},\quad u_3(\cdot, 0) = u_{3,0}, && \Omega.
\end{aligned}
\right.
\]
where $u_i$ is the concentration density of $S_i$, the functions $d_i$ and $k_i$ are positive, and the initial data $u_{i,0}$ is bounded and nonnegative. If we rename the components \( v_1 = u_3 \), \( v_2 = u_2 \), and \( v_3 = u_1 \), replace the local diffusion with our  nonlocal diffusion operator $\Gamma_i(u)$, and treat the \( k_i \) terms as constants, then for $i=1,2,3$, we obtain a model of the form:
\[
\left\{
\begin{aligned}
&\partial_t v_i = d_i \Gamma(v_i) + f_i(v), &&\text{in } \Omega \times \mathbb{R}_+, \\
&v_i(x,0) = v_{0,i}(x), &&\text{in } \Omega.
\end{aligned}
\right.
\]
with the reaction vector field
\[
f(v) = \begin{pmatrix}
2k_2 (v_2 - v_1^2) \\
-2k_1 (v_2^2 - v_3) + k_2(v_1^2 - v_2) \\
k_1 (v_2^2 - v_3)
\end{pmatrix}.
\]
We note that the reaction vector field is quasi positive, and
\[
f_1(v) + 2f_2(v) + 4f_3(v) = 0,
\]
for all $v \in \mathbb{R}_+^3$, and consequently, we obtain global existence of component-wise nonnegative solutions with uniform sup norm bounds from Theorem \ref{global}. 

If we modify the system above so that \( v_1 \) and \( v_2 \) diffuse nonlocally and \( v_3 \) diffuses locally (with homogeneous Neumann boundary conditions), then the system becomes:
\[
\begin{cases}
\partial_t v_i = d_i \Gamma(v_i) + f_i(v), & x \in \Omega, \; t \ge 0,\; i = 1,2, \\
\partial_t v_3 = d_3 \Delta v_3 + f_3(v), & x \in \Omega, \; t > 0, \\
\frac{\partial v_3}{\partial \eta} = 0, & x \in \partial \Omega, \; t > 0, \\
v_i(x,0) = v_{0,i}(x), & x \in \Omega,\, i=1,2,3.
\end{cases}
\]
We observe that
\[
f_1(v) + f_2(v) = 2k_1 v_3 + k_2 v_2,
\quad \text{and} \quad
f_1(v) + 2f_2(v) + 4f_3(v) = 0,
\]
for all $v \in \mathbb{R}_+^3$, and conseuqently, global existence follows from Theorem \ref{globalmixed}. 

Finally, we can perform the diffusive limit on the full nonlocal model to derive convergence toward the corresponding classical reaction-diffusion system (with Neumann boundary conditions), as stated in Theorem \ref{DiffLimit}
since the system above satisfies the linear intermediate sum conditions:
\[
f_1(v) \le 2k_2 v_2,
\quad
f_1(v) + f_2(v) \le 2k_1 v_3 + k_2 v_2,
\quad
f_1(v) + 2f_2(v) + 4f_3(v) = 0, \quad \text{for all } v \in \mathbb{R}_+^3.
\]
Moreover, the result in \cite{morgan1990boundedness} implies that solutions to the limiting system with local diffusion and homogeneous Neumann boundary conditions are uniformly bounded in the sup-norm because the reaction vector field satisfies a linear intermediate sum condition and
\[
f_1(v) + 2f_2(v) + 4f_3(v) = 0
\]
 for all $v \in \mathbb{R}_+^3$. 

\bigskip Now, let's consider a rumor model  studied in \cite{chen2020rumor} with no consideration of diffusion. If we add nonlocal diffusion to their model, then we obtain the system given by
\[
\left\{
\begin{aligned}
&\partial_t v_i = d_i \Gamma(v_i) + f_i(v), &&\text{in } \Omega \times \mathbb{R}_+, \ i=1,2,3,4,5 \\
&v_i(x,0) = v_{0,i}(x), &&\text{in } \Omega,\,i=1,...,5.
\end{aligned}
\right.
\]
with the reaction vector field
\[
f(v) = 
\begin{pmatrix}
-\bar{k} \, v_1 \, v_4 \left( \gamma \alpha \lambda \mu + \gamma(1 - \gamma) \alpha \lambda \right) \\
-\bar{k} \, v_2 \, v_4 \gamma \alpha \lambda \\
\bar{k} \, v_1 \, v_4 \gamma(1 - \gamma)\alpha \lambda 
- \bar{k} \, v_4 \, v_3 \theta 
- \bar{k} \, v_5 \, v_3 \phi \\
\bar{k} \, v_4 (\mu v_1 + v_2) \gamma \alpha \lambda 
+ \bar{k} \, v_4 v_3 \theta 
- \bar{k} \, v_4 (v_5 + v_4 + v_3) \eta_1 
- v_4 \eta_2 \\
\bar{k} \, v_4 (v_5 + v_4 + v_3) \eta_1 
+ v_4 \eta_2 
+ \bar{k} \, v_5 v_3 \phi
\end{pmatrix}, \ \ \ \forall \ v \in \mathbb{R}_+^5.
\]
$0<\gamma,\alpha,\mu \le 1$ and $0 \le \lambda,\theta,\phi,\eta_1,\eta_2 \le 1$, and the components $u_1$, $u_2$, $u_3$, $u_4$ and $u_5$ represent the population densities of the ``Steady Ignorant'' population (people who do not know the rumor, and if they hear the rumor, they prefer to contemplate it and seek confirmation before making decisions), the ``Radical Ignorant'' population (people who do not know the rumor, and if they hear the rumor, they are most likely to believe it and spread it without contemplating it or seeking confirmation), the ``Exposed'' population (people who know the rumor but hesitate to believe it and do not spread it), the ``Spreader'' population (people who spread the rumor), and the ``Stifler'' population (people who know the rumor but never spread it or stop spreading it), respectively. We observe that $f$ is quasi-positive and 
\[
f_1(v) + f_2(v) + f_3(v) + f_4(v) + f_5(v) = 0,
\]
for all $v \in \mathbb{R}_+^5$, which yields global existence of component-wise nonnegative solutions with uniform sup norm bounds from Theorem \ref{global}. Furthermore, it is easy to see that $f$ satisfies a linear intermediate sum condition, and consequently, the diffusive limit result in Theorem \ref{DiffLimit} can be obtained.

If we consider a scenario where \( v_1 \), \(v_2 \) and \( v_3 \) diffuse nonlocally and \( v_4 \) and \(v_5 \) diffuse locally (with homogeneous Neumann boundary conditions), then the system becomes:
\[
\begin{cases}
\partial_t v_i = d_i \Gamma(v_i) + f_i(v), & x \in \Omega, \; t > 0,\; i = 1,2,3 \\
\partial_t v_j = d_j \Delta v_j + f_j(v), & x \in \Omega, \; t > 0, \; j = 4,5 \\
\frac{\partial v_j}{\partial \eta} = 0, & x \in \partial \Omega, \; t > 0, \\
v_k(x,0) = v_{0,k}(x), & x \in \Omega, \; k=1,...,5,
\end{cases}
\]
with the same reaction vector field, as noted above, the reaction vector field satisfies a linear intermediate sum condition, and this again implies global existence from Theorem \ref{globalmixed}, along with the diffusive limit result in Theorem \ref{DiffLimit2}.

Lastly, to perform the diffusive limit on the full nonlocal model to derive convergence toward the corresponding classical reaction-diffusion system (with Neumann boundary conditions), as stated in Theorem \ref{DiffLimit}, note that the system above satisfies the linear intermediate sum conditions for all $v \in \mathbb{R}_+^5$, and
\[
f_1(v) + f_2(v) + f_3(v) + f_4(v) + f_5(v) = 0,
\]
for all $v \in \mathbb{R}_+^5$. Therefore, the results in \cite{morgan1990boundedness} imply that solutions to the limiting reaction-diffusion system with local diffusion and homogeneous Neumann boundary conditions are uniformly bounded in the sup-norm.

\subsection{Numerical Simulations}

Here we present some numerical approximations to support our results and illustrate the difference between systems with local diffusion and systems with nonlocal diffusion. We begin by approximating the solutions of our main system 
(\ref{eq:2.11}) in $2$-D settings (for example Gray-Scott Model) using the Finite Difference Method with the Forward Euler Method to calculate the convolution operator defining its integral. Before that, we represent the approximation of local/classical reaction-diffusion systems in $2$-D. In this case, we consider the same Gray-Scott model. Next, we develop the nonlocal model to numerically illustrate Theorem \ref{DiffLimit}. Finally, we use the Method of Lines to present numerically approximated solutions of systems $(\ref{eq:2.11})$ with \hyperlink{item:(INT-SUM2)}{(INT-SUM2)} and $(\ref{eq:2.19})$ with  \hyperlink{item:(INT-SUM2)}{(INT-SUM2)} to show their visual differences.

In all the examples below, we assume $\Omega=(0,2) \times (0,1) \subset \mathbb{R}^2$ which is a rectangle of length, $L=2$m and width, $W=1$m. We discretize $L$ into $100$ slices and $W$ into $50$ slices to create $5000$ mesh-points $(X,Y)$ in our domain. Our method is stable for reaction-diffusion systems since $d_i \Delta t\left(\frac{1}{(\Delta x)^2}+\frac{1}{ (\Delta y)^2}\right) \le 0.5$ for $i=1,2$.

\subsection{Numerical Approximation of Solutions of Local vs Nonlocal RDEs}
First we consider the local or classical Gray-Scott model below
\begin{equation}\label{eq:6.1}
\left\{
\begin{aligned} 
    \partial_t u_{1} & = d_1 \Delta u_1 + f_1(u_1,u_2), && (x,t) \in Q_{0,\infty},\\
    \partial_t u_{2} & = d_2 \Delta u_2 + f_2(u_1,u_2), && (x,t) \in Q_{0,\infty},\\
    \frac{\partial u_1}{\partial \eta} & = \frac{\partial u_2}{\partial \eta} = 0, && (x,t) \in \partial \Omega \times (0,\infty),\\
    (u_1,u_2) (x,0) & = (u_{1,0},u_{2,0}), && x \in \Omega,
\end{aligned}  
\right. 
\end{equation}
where $f_1(u_1,u_2)=a(1-u_1)-u_1u_2^2$ and $f_2(u_1,u_2)=-(a+b)u_2 + u_1u_2^2$ with feed rate $a>0$ and kill rate $b>0$, and $d_1,d_2>0$. We discretize the system to approximate the Laplacian and homogeneous Neumann boundary conditions, and use MATLAB to get figure $(\ref{fig1: Solutions of local Gray-Scott model for $a=0.25$, $b=0.080$, $d_1 = 0.1$, $d_2 = 0.01$.})$ using above information.

Now, using $(\ref{eq:2.11})$ for $i=1,2$, the nonlocal version of Gray-Scott model is,
\begin{equation}\label{eq:6.2}
\left\{
\begin{aligned} 
    \partial_t u_{1} & = d_1 \Gamma_1 u_1 + f_1(u_1,u_2), && (x,t) \in Q_{0,\infty}\\
    \partial_t u_{2} & = d_2 \Gamma_2 u_2 + f_2(u_1,u_2), &&(x,t) \in Q_{0,\infty}\\
    (u_1,u_2) (x,0) & = (u_{1,0},u_{2,0}),  && x \in \Omega.
\end{aligned}  
\right. 
\end{equation}
We consider the following Gaussian kernel with standard deviation, $\epsilon$ for the nonlocal operator
\begin{flalign*}
\varphi(x,y) =\exp\bigl(-\frac{x^2+y^2}{2\epsilon^2} \bigr).
\end{flalign*}
Then we discretize and use the same initial conditions as in the local Gray-Scott model to get figure (\ref{fig2: Solutions of nonlocal Gray-Scott model for $a=0.25$, $b=0.080$, $d_1 = 0.1$, $d_2 = 0.01$.}).

\subsubsection{Visualizations}

\begin{figure}[H]
\centering
     \includegraphics[width=6 cm]{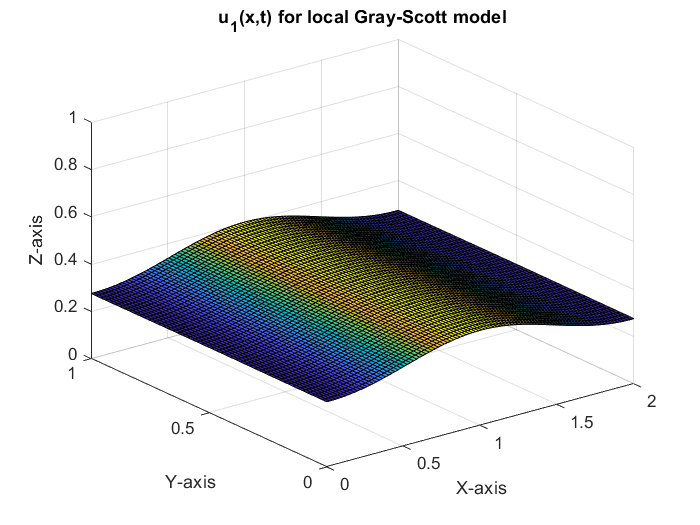}
     \includegraphics[width=6 cm]{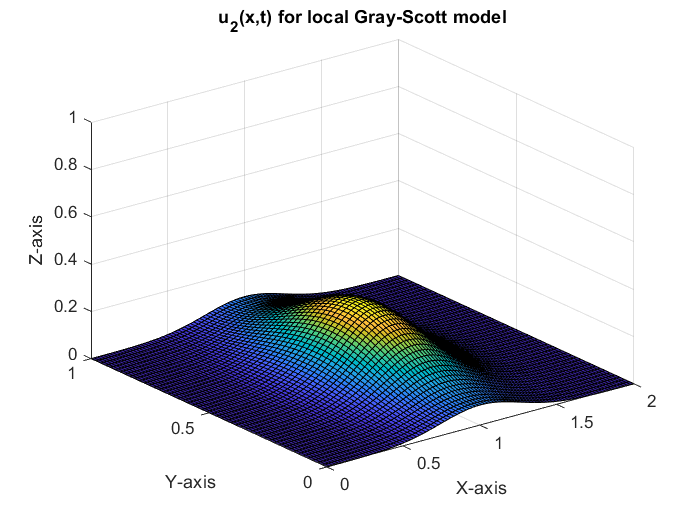}
     \caption{Solutions of local Gray-Scott model for $a=0.25$, $b=0.080$, $d_1 = 0.1$, $d_2 = 0.01$.}
    \label{fig1: Solutions of local Gray-Scott model for $a=0.25$, $b=0.080$, $d_1 = 0.1$, $d_2 = 0.01$.}
\end{figure}
\vspace{-0.1 in}
\begin{figure}[H]
\centering
     \includegraphics[width=6 cm]{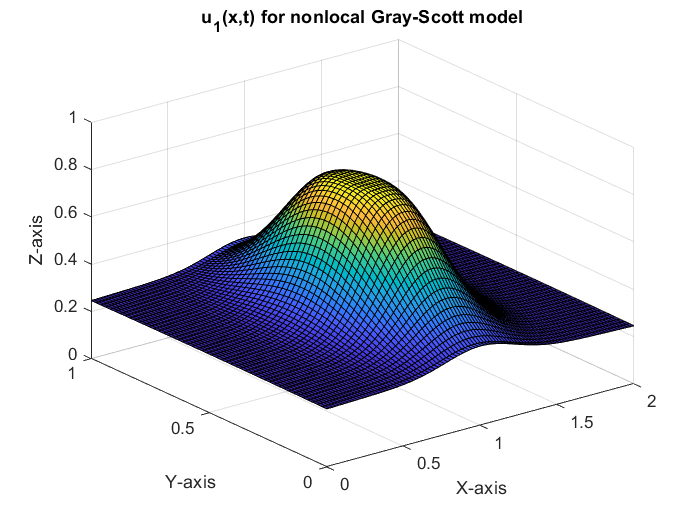}
     \includegraphics[width=6 cm]{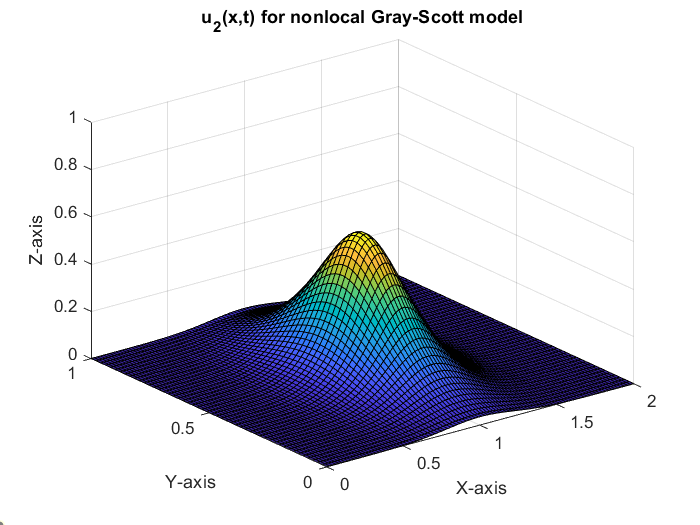}
     \caption{Solutions of nonlocal Gray-Scott model for $a=0.25$, $b=0.080$, $d_1 = 0.1$, $d_2 = 0.01$.}
    \label{fig2: Solutions of nonlocal Gray-Scott model for $a=0.25$, $b=0.080$, $d_1 = 0.1$, $d_2 = 0.01$.}
\end{figure}

\vspace{-0.05 in}
\subsection{Numerical Convergence of Solutions of NRDEs to RDEs}
\vspace{-0.05 in}

In this section, we numerically illustrate Theorem \ref{DiffLimit}. First we need to construct a sequence of solutions of $(\ref{eq:6.2})$ with nonlocal diffusion and reaction terms. We use the same discretization with same initial conditions. Then, we use $(\ref{eq:2.17})$ in such a way that if $j \to \infty$, the solutions of $(\ref{eq:6.2})$ converge to the solutions of $(\ref{eq:6.1})$. We employed this idea into the MATLAB code of our nonlocal Gray-Scott model. This gave the following visual approximations, and from these we have listed the numerical agreement in the tabular form. Since we have a domain of $5000$ discretized points, we have listed the approximated solutions of the first five of them in the table for different values of $j$. We have compared them with the numerical values of solutions of the local system $(\ref{eq:6.1})$.

\vspace{-0.15 in}
\subsubsection{Visualization and Numerical Agreement}
\begin{figure}[H]
\centering
     \includegraphics[width=6 cm]{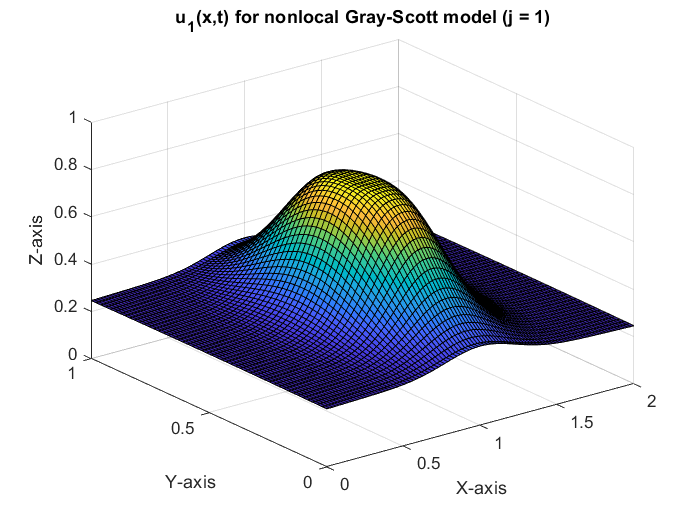}
     \includegraphics[width=6 cm]{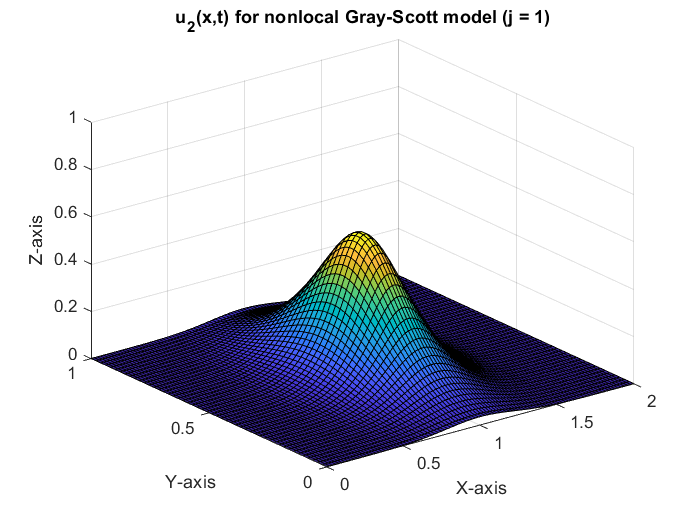}
     \caption{Solutions of nonlocal Gray-Scott model for $j=1$, $a=0.25$, $b=0.080$, $d_1 = 0.1$, $d_2 = 0.01$.}
    \label{fig3: Solutions of nonlocal Gray-Scott model for $j=1$, $a=0.25$, $b=0.080$, $d_1 = 0.1$, $d_2 = 0.01$.}
\end{figure}
\vspace{-0.2 in}
\begin{table}[H]
\centering
\begin{tabular}{l | l | l | l | l | l}
\hline
$u_{1_{local}}$ & $u_{2_{local}}$ & $u_{1_{nonlocal}}$ & $u_{2_{nonlocal}}$ & $ \vert u_{1_{local}}-u_{1_{nonlocal}} \vert $ & $ \vert u_{2_{local}} - u_{2_{nonlocal}} \vert $ \\
\hline
$0.27568$ & $1.5842e-04$ & $0.23263$ & $6.3166e-04$ & $4.305e-02$ & $4.7324e-04$\\
\hline
$0.27569$ & $1.6123e-04$ & $0.23273$ & $6.3795e-04$ & $4.296e-02$ & $4.7672e-04$\\
\hline
$0.2757$ & $1.668e-04$ & $0.23283$ & $6.4412e-04$ & $4.287e-02$ & $4.7732e-04$\\
\hline
$0.27573$ & $1.7503e-04$ & $0.23292$ & $6.5017e-04$ & $4.281e-02$ & $4.7514e-04$\\
\hline
$0.27577$ & $1.8577e-04$ & $0.23301$ & $6.5609e-04$ & $4.276e-02$ & $4.7032e-04$\\
\hline
\end{tabular}
\caption{Top 5 rows of absolute difference of local and nonlocal solutions of Gray-Scott model for $j=1$, $a=0.25$, $b=0.080$, $d_1 = 0.1$ and $d_2 = 0.01$.}
\label{tab1:top}
\end{table}
\vspace{-0.15 in}

\noindent We observe that for $j=1$, $u_2$ is very good compared to $u_1$. We then simulate the program for $j=3$ to get the following results. This time the difference on $u_1$ is better than last time but the difference of $u_2$ is bigger. 
\vspace{-0.1 in}
\begin{figure}[H]
\centering
     \includegraphics[width=6 cm]{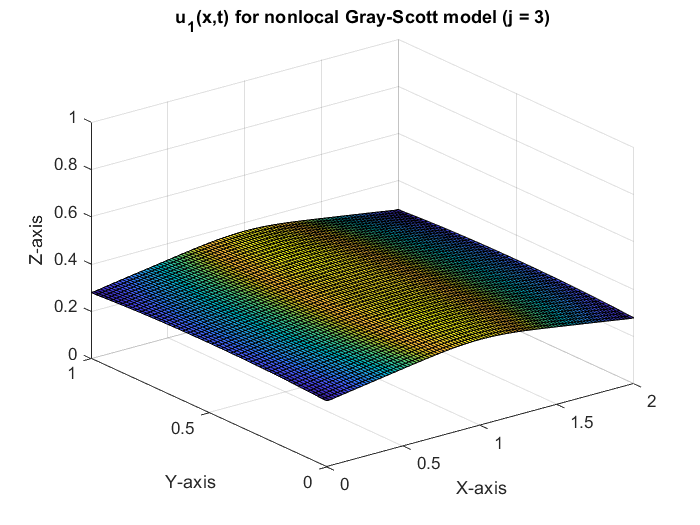}
     \includegraphics[width=6 cm]{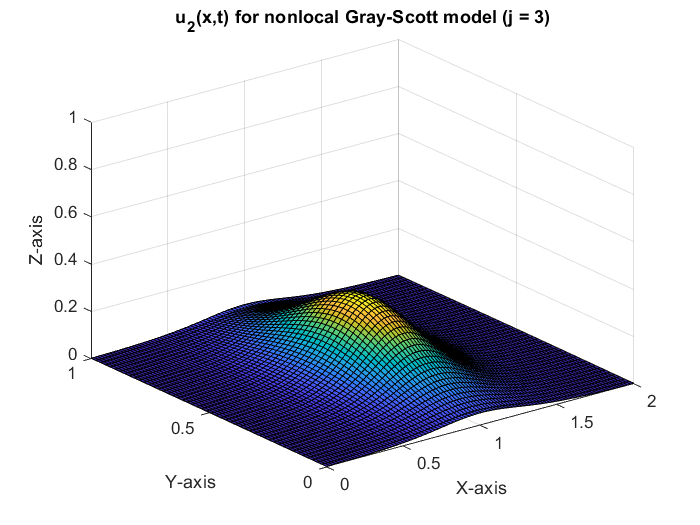}
     \caption{Solutions of nonlocal Gray-Scott model for $j=3$, $a=0.25$, $b=0.080$, $d_1 = 0.1$, $d_2 = 0.01$.}
    \label{fig4: Solutions of nonlocal Gray-Scott model for $j=3$, $a=0.25$, $b=0.080$, $d_1 = 0.1$, $d_2 = 0.01$.}
\end{figure}

\vspace{-0.2 in}
\begin{table}[H]
\centering
\begin{tabular}{l | l | l | l | l | l}
\hline
$u_{1_{local}}$ & $u_{2_{local}}$ & $u_{1_{nonlocal}}$ & $u_{2_{nonlocal}}$ & $ \vert u_{1_{local}}-u_{1_{nonlocal}} \vert $ & $ \vert u_{2_{local}} - u_{2_{nonlocal}} \vert $ \\
\hline
$0.27568$ & $1.5842e-04$ & $0.27984$ & $1.9227e-03$ & $4.16e-03$ & $1.76428e-03$\\
\hline
$0.27569$ & $1.6123e-04$ & $0.2813$ & $2.0285e-03$ & $5.61e-03$ & $1.86727e-03$\\
\hline
$0.2757$ & $1.668e-04$ & $0.28268$ & $2.1348e-03$ & $6.98e-03$ & $1.968e-03$\\
\hline
$0.27573$ & $1.7503e-04$ & $0.28397$ & $2.2411e-03$ & $8.24e-03$ & $2.06607e-03$\\
\hline
$0.27577$ & $1.8577e-04$ & $0.2852$ & $2.347e-03$ & $9.43e-03$ & $2.16123e-03$\\
\hline
\end{tabular}
\caption{Top 5 rows of absolute difference of local and nonlocal solutions of Gray-Scott model for $j=3$, $a=0.25$, $b=0.080$, $d_1 = 0.1$ and $d_2 = 0.01$.}
\label{tab2:top}
\end{table}
\vspace{-0.15 in}

\noindent To speed up the convergence rate, we define $j$ in a different way to get better results. We had,
\vspace{-0.1 in}
\begin{flalign*}
   \varphi_j = j^{n+2} \exp(-j^2 \frac{x^2+y^2}{2 \epsilon^2}) 
\end{flalign*}
Since this is a Gaussian Kernel, sending $\epsilon \to 0$ is going to help $j \to \infty$ faster. We employ this idea to redefine the kernel and then run the MATLAB program to get approximations for which the absolute difference between the solutions of local and nonlocal goes down. Here we see good agreement for $j=7$ and $\epsilon=0.81$.
\begin{figure}[H]
\centering
     \includegraphics[width=6 cm]{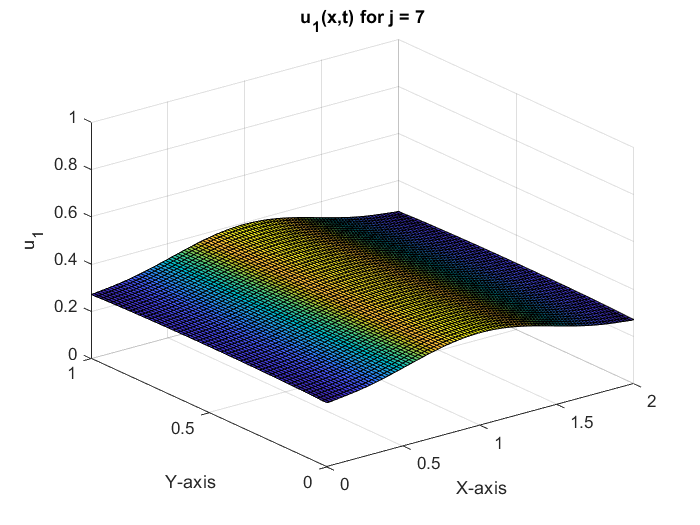}
     \includegraphics[width=6 cm]{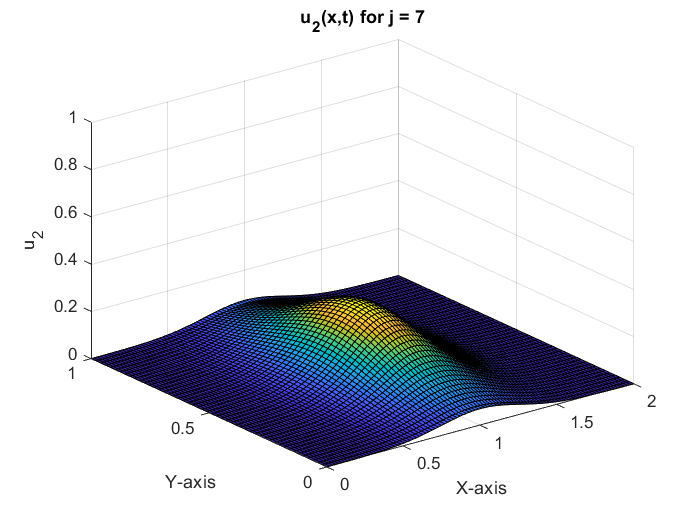}
     \caption{Solutions of nonlocal Gray-Scott model for $j=7$, $\epsilon=0.81$, $a=0.25$, $b=0.080$, $d_1 = 0.1$, $d_2 = 0.01$.}
    \label{fig5: Solutions of nonlocal Gray-Scott model.}
\end{figure}

\begin{table}[H]
\centering
\begin{tabular}{l | l | l | l | l | l}
\hline
$u_{1_{local}}$ & $u_{2_{local}}$ & $u_{1_{nonlocal}}$ & $u_{2_{nonlocal}}$ & $ \vert u_{1_{local}}-u_{1_{nonlocal}} \vert $ & $ \vert u_{2_{local}} - u_{2_{nonlocal}} \vert $ \\
\hline
$0.27568$ & $1.5842e-04$ & $0.2718$ & $2.2286e-04$ & $3.88e-03$ & $6.444e-05$\\
\hline
$0.27569$ & $1.6123e-04$ & $0.27297$ & $2.5215e-04$ & $2.72e-03$ & $9.092e-05$\\
\hline
$0.2757$ & $1.668e-04$ & $0.27394$ & $2.8195e-04$ & $1.76e-03$ & $1.1515e-04$\\
\hline
$0.27573$ & $1.7503e-04$ & $0.27476$ & $3.121e-04$ & $9.7e-04$ & $1.3707e-04$\\
\hline
$0.27577$ & $1.8577e-04$ & $0.27546$ & $3.4252e-04$ & $3.1e-04$ & $1.5675e-04$\\
\hline
\end{tabular}
\caption{Top 5 rows of absolute difference of local and nonlocal solutions of Gray-Scott model for $j=7$, $\epsilon=0.81$, $a=0.25$, $b=0.080$, $d_1 = 0.1$ and $d_2 = 0.01$.}
\label{tab3:top}
\end{table}

\noindent As we can see, the example above numerically illustrates the result in Theorem \ref{DiffLimit}.

\subsection{Numerical Approximation of Solutions of NRDEs with (INT-SUM2).}

Now we consider $\Omega=(0,2) \subset \mathbb{R}$, and we implement the Method of Lines \cite{verwer1984convergence,chen2005nonlinear,mohamed2018chemical} procedure to approximate the solutions of the following system:
\begin{flalign} \label{eq:6.3}
\begin{cases}
\frac{\partial}{\partial t} u(x,t) & = d_1 \Gamma_1 u(x,t) + u(x,t)+v(x,t)-u(x,t)v(x,t)^2, \hspace{1.1 in} (x,t) \in Q_{0,\infty}\\
\frac{\partial}{\partial t} v(x,t) & = d_2 \Gamma_2 v(x,t) + u(x,t)v(x,t)^2, \hspace{2.10 in} (x,t) \in Q_{0,\infty} \\
u(x,0) & =0.5 e^{\frac{-(x-\frac{L}{2})}{0.1}}, \ v(x,0) = 0.5 e^{\frac{-(x-\frac{L}{2})}{0.1}}, \hspace{1.95 in} x \in \Omega.
\end{cases}
\end{flalign}
Since the Method of Lines is a semi-discrete method, we discretize only the spatial variable $x$ and leave the time variable $t$ as it is. We discretize the domain with 100 equal grid points. We choose $d_1=0.1$, $d_2=0.01$, \textit{ode15s} as the ODE solver for stiff differential equations, which uses the Backward Differentiation Formula (BDF), also known as Gear’s method for time integration. We use $e^{-\frac{(x_i - x_j)^2}{2\epsilon^2}}$ as the Gaussian kernel where $x_i$ is the current grid point and $x_j$ are the other grid points for $i\ne j$. We use $\epsilon=1.0$, and we follow the similar procedure as in the previous example to discretize the system in $(\ref{eq:6.3})$, but this time only spatially. Therefore, the system results in the two dimensional ODE system given by
\begin{flalign*}
 \frac{du_{new}}{dt} & = d_1 \sum_{\substack{i=1 \\ i\neq j}}^j e^{-\frac{(x_i-x_j)^2}{2\epsilon^2}} (u(x_i,t)-u(x_j,t)) (x_j-x_{j-1}) + u_{old}+v_{old}-u_{old}v_{old}^2, \\
\frac{dv_{new}}{dt} & = d_2 \sum_{\substack{i=0 \\ i\neq j}}^j e^{-\frac{(x_i-x_j)^2}{2\epsilon^2}} (v(x_i,t) - v(x_j,t)) (x_j-x_{j-1}) + u_{old}v_{old}^2, 
\end{flalign*}
We also discretize our boundary in similar way and run the code for the above model for two seconds to get the following results.

\begin{figure}[H]
\centering
     \includegraphics[width=6 cm]{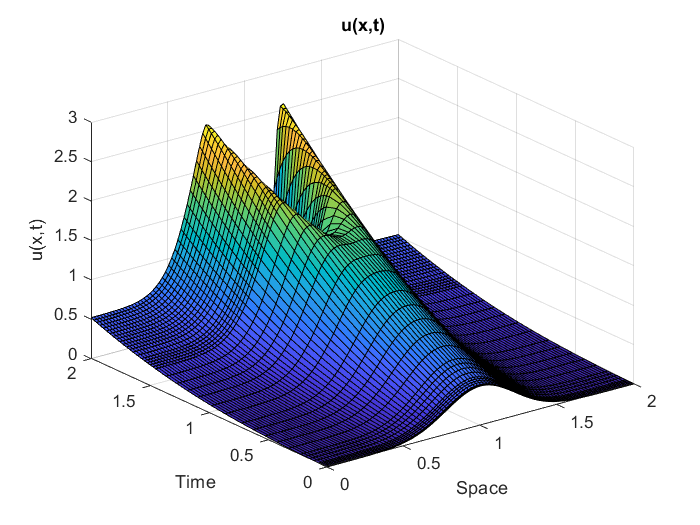}
     \includegraphics[width=6 cm]{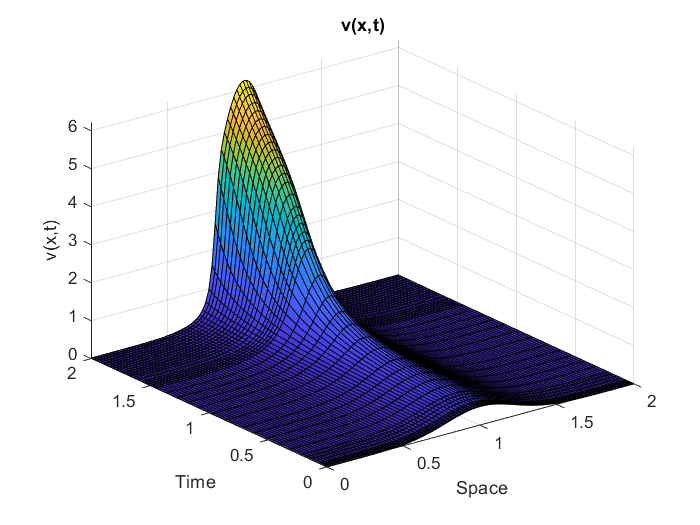}
     \caption{Solutions of NRD model with new intermediate sum inequality.}
    \label{fig6: Solutions of NRD model with new intermediate sum inequality.}
\end{figure}

\noindent Since, nonlocal operators do not regularize solutions, we observe a nonsmooth spike-like shape.

\subsection{Numerical Approximation of Solutions of Mixed type RDEs with (INT-SUM2).}
\vspace{-0.1 in}

We apply the Method of Lines for the following mixed type RDE model with the same domain settings (same boundary condition too) as in the previous section.  
\begin{flalign} \label{eq:6.4}
\begin{cases}
\frac{\partial}{\partial t} u(x,t) & = d_1 \Gamma u(x,t) + u(x,t)+v(x,t)-u(x,t)v(x,t)^2, \hspace{1 in} (x,t) \in Q_{0,\infty} \\
    \frac{\partial}{\partial t} v(x,t) & = d_2 \Delta v(x,t) + u(x,t)v(x,t)^2, \hspace{1.97 in} (x,t) \in Q_{0,\infty} \\
   \frac{\partial v}{\partial \eta}(x,t) & = 0, \hspace{3.33 in} (x,t) \in \partial \Omega \times (0,\infty)\\
    u(x,0) & =0.5 e^{\frac{-(x-\frac{L}{2})}{0.1}}, \ v(x,0) = 0.5 e^{\frac{-(x-\frac{L}{2})}{0.1}}, \hspace{1.78 in} x \in \Omega.
\end{cases}    
\end{flalign}
We discretize the Laplacian using Central Difference Formula on Taylor polynomial but in $1$-D this time. We have got the following results which clearly shows significant qualitative and quantitative differences of solutions from the previous model.
\vspace{-0.1 in}
\begin{figure}[H]
\centering
     \includegraphics[width=5.5 cm]{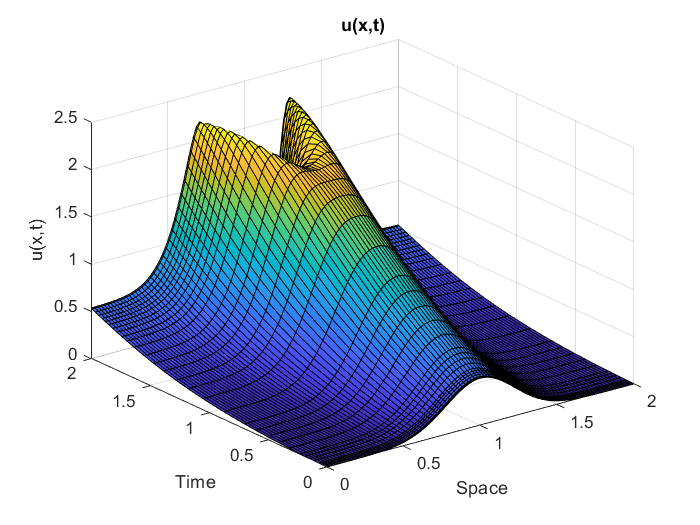}
     \includegraphics[width=5.5 cm]{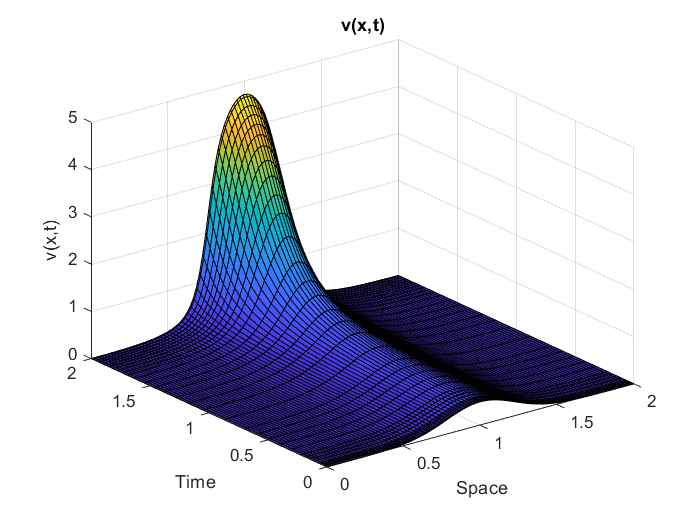}
     \caption{Solutions of Mixed RDE model new intermediate sum inequality.}
    \label{fig7: Solutions of Mixed RDE model new intermediate sum inequality.}
\end{figure}

\noindent We observe that the peaks of the solution $v$ of this system are less pronounced and smoother than those for $v$ in the system represented by $(\ref{eq:6.3})$. This is because the system $(\ref{eq:6.4})$ has Laplacian operator (local diffusion) in the PDE for $v$ which smooths solutions. As a result, figure $(\ref{fig7: Solutions of Mixed RDE model new intermediate sum inequality.})$ illustrates more smoothness in $v$ than figure $(\ref{fig6: Solutions of NRD model with new intermediate sum inequality.})$.

\section{Conclusions}

We establish global existence of solutions for a general class of $m$-component nonlocal reaction–diffusion systems posed on bounded domains in dimension $n \ge 2$ that preserve nonnegativity and control mass. We also obtain diffusive limit results by employing a linear intermediate sum condition to obtain bounds for $\|u(\cdot,t)\|_{p,\Omega}$ with $2\le p<\infty$ and $0 \leq t \leq T$ depending on $p$, $T$, initial data and parameters in our assumptions, but independent of the kernel $\varphi$ of the nonlocal operators in \eqref{eq:2.11}. Examples and numerical approximations are included to illustrate our results.\\
\\

\noindent\textbf{Conflict of Interest:} The authors declare no conflict of interest.\\

\noindent\textbf{Use of AI Tools Declaration:} The authors did not make use of AI to generate any of the content in this work.

\bibliography{thesis_biblio}
\bibliographystyle{acm}

\end{document}